\newcommand{\ds}{\displaystyle}
\newcommand{\reals}{\mathbb{R}}
\newcommand{\realstwo}{\mathbb{R}^2}
\newcommand{\realsthree}{\mathbb{R}^3}
\newcommand{\xb}{{\bf{x}}}
\newcommand{\cD}{\mathscr{D}}
\newcommand{\Dn}{\partial_{\nu}}
\newcommand{\Dz}{\partial_z}
\newcommand{\cE}{{\mathcal{E}}}
\newcommand{\cF}{\mathcal{F}}
\newcommand{\R}{\mathbb{R}}
\newcommand{\bu}{\mathbf u}
\newcommand{\bz}{\mathbf z}
\newcommand{\bE}{\mathbf{E}}
\newcommand{\bT}{\mathbb{T}}
\newcommand{\lb}{ \langle}
\newcommand{\rb}{ \rangle}
\newcommand{\ou}{\overline u}
\newcommand{\of}{\overline{\phi}}
\theoremstyle{plain}
\newtheorem{theorem}{Theorem}[section]
\newtheorem{lemma}[theorem]{Lemma}
\newtheorem{corollary}[theorem]{Corollary}
\theoremstyle{remark}
\newtheorem{remark}{Remark}[section]
\numberwithin{equation}{section}
\numberwithin{theorem}{section}
\numberwithin{remark}{section}
\numberwithin{assumption}{section}
\numberwithin{condition}{section}
\begin{document}

\title{Feedback stabilization of a fluttering panel in an inviscid subsonic potential flow
\thanks{I. Lasiecka was partially supported by the National Science Foundation with grant NSF-DMS-0606682 and the United States Air Force Office of Scientific Research with grant AFOSR-FA99550-9-1-0459.
 J.T. Webster was partially supported by National Science Foundation with grant NSF-DMS-1504697.
}
}


 \author{ 
            { Irena Lasiecka \footnote{University of Memphis, Memphis,  lasiecka@memphis.edu} }  { Justin T. Webster \footnote{The College of Charleston, Charleston, SC; websterj@cofc.edu}} }


%

\maketitle

\begin{abstract}
Asymptotic-in-time feedback control of a panel interacting with an inviscid, subsonic flow is considered. The classical model \cite{dowellnon} is given by a clamped nonlinear plate strongly coupled to a convected wave equation on the half space. In the absence of imposed energy dissipation the plate dynamics converge to a compact and finite dimensional set \cite{delay,fereisel}. With a sufficiently large velocity feedback control on the structure we show that {\em the full flow-plate system exhibits strong convergence} to the stationary set in the natural energy topology. To accomplish this task, a novel decomposition of the nonlinear plate dynamics is utilized: a smooth component (globally bounded in a higher topology) and a uniformly exponentially decaying component.

Our result implies that {\em flutter} (a periodic or chaotic end behavior) can be eliminated (in subsonic flows) with sufficient frictional  damping in the structure. While such a result has been proved in the past  for regularized plate models (with rotational inertia terms or thermal considerations \cite{springer,conequil1,ryz,ryz2}), this is the first treatment which  does not incorporate smoothing effects for the structure.  

\vskip.2cm
\noindent {\em Key Terms}: strong stability, nonlinear plates, flow-structure interaction, PDE with delay, attractors
\vskip.1cm
\noindent {\em 2010 AMS}: 74F10, 74K20, 76G25, 35B40, 35G25, 37L15
\end{abstract}

\section{Introduction}\label{Intro}

	This work is motivated by physical problems in aeroelasticity. {\em Flutter} of a thin structure immersed in a flow field occurs (as a bifurcation at a particular flow velocity) when the structural modes couple with the dynamic loading of the flow, creating a feedback \cite{bolotin,dowellnon}. The canonical example of {\em panel flutter} occurs in aircraft paneling: panel deformations perturb the flow field, which then determines the dynamic loading on the plate (resulting in structural deformation, etc.). The principal challenge in aeroelasticity is the control, suppression, or prevention of the flutter of mechanical structures in flight \cite{B,dowell1,dowellrecent}. Though the {\em flutter point} may be predicted from linear dynamics, accurately describing post-flutter displacements of a panel requires a nonlinear model \cite{dowellnon,B}.

From a mathematical point of view the flutter problem may be described as {\it stabilization to a given stationary  set} for a coupled evolutionary PDE system: a nonlinear clamped plate interacting with a potential flow \cite{bolotin,dowellnon,HolMar78}.  Here, we consider Berger plates in the absence of {\em rotational inertia} (consistent with the conventions of aeroelasticity \cite{bolotin,dowellnon,B,dowell1} and references therein), where the plate is thin and in-plane accelerations are not accounted for \cite{lagnese}. The task at hand, then, is to determine conditions under which the PDE system is {\it strongly stable}, in the sense that full flow-plate trajectories converge (in the strong topology of the underlying  finite energy space) to a set of interest (e.g., the equilibria set for the dynamical system generated by the solutions). Such a strong convergence to equilibria does not seem possible in the absence of dissipative mechanisms. Thus, we seek an interior velocity feedback control (of sufficient size---depending on the intrinsic parameters of the problem, not on the particular initial data). Under some physical assumptions this will imply that flutter is non-present asymptotically in time. {\em Our result demonstrates that flutter can be eliminated  (in the subsonic regime), provided a sufficiently large damping is present   in the plate.} In agreement with the physics of the problem,  the linear model for the flow dynamics remains undamped.

In this treatment, we primarily focus on the Berger plate model. Other work on flutter dynamics have considered Berger-like beam and plate models (see \cite{Memoires} and references therein), as well as the scalar von Karman equations \cite{springer,delay,supersonic,webster}. Many of our supporting results here are valid for both the von Karman {\em and} Berger plate, but the principal results presented below in Section \ref{mainresults} concern Berger plate dynamics. The Berger plate is a nonlocal, cubic-type nonlinearity. The Berger nonlinearity is arrived at via a simplification of the scalar von Karman equation \cite{berger,inconsistent,studyberger} (with the scalar von Karman equation itself a simplification of the {\em full von Karman system} \cite{koch, lagnese}).\footnote{In fact, in many cases the analysis of the scalar von Karman plate subsumes that of Berger \cite{springer}.} 
However, there are times (such as in the main result presented here) when working in {\em higher topologies} for the dynamics produces a stark difference in the behavior of the von Karman plate and the Berger plate. 

Physically, when the plate has no portion of its boundary {\em free}, the Berger approximation is taken to be valid \cite{inconsistent,studyberger} (and especially in the case of clamped conditions, as we take here). The von Karman plate dynamics were considered in \cite{conequil1} (and in the treatments leading up to \cite{conequil1}, including \cite{chuey,ryz,ryz2} and \cite{springer}). In \cite{conequil1}, large {\em static and viscous} damping was considered to obtain the main result (which is valid for both the non-rotational von Karman and Berger dynamics). Here, we consider the more physical case where {\em only viscous damping is active} in the plate dynamics (though we cannot dispense with the size requirement).

\subsection{Previous Literature in Relation to the Present Work}\label{prevv}

The study in \cite{delay} provides long-time asymptotic  properties of (finite energy) solutions for the model herein (for Berger, von Karman, and other nonlinearities of {\em physical type} \cite{supersonic}) {\em with no restrictions on the damping size}---it can be taken to be zero. In this case, the originally ``rough'' plate dynamics become asymptotically regular and finite dimensional in nature. Thus, the flow has the ability to dissipate mechanical plate energy and thereby induce a degree of stability to the structural dynamics. However, this stability, in general, may be of a chaotic character. It is one of the principal purposes of this treatment to show that, in the subsonic case, sufficiently large dissipation in the structure alone prevents this chaotic behavior and leads to strong stability to the equilibria set. 

We now mention three other closely related scenarios in the literature to date: (i) the addition of thermoelastic dynamics to the nonlinear plate, (ii) the presence of rotational inertia terms {\em and} {\em strong} mechanical damping, and (iii) large {\em static and viscous} damping in the model presented here. (These treatments address von Karman plates, though the results also hold for Berger plates.) The treatments in \cite{ryz,ryz2} consider the plate with the addition of a heat equation in the plate dynamics. In this case no damping is needed, as the analytic smoothing and stability properties of thermoelasticity provide ultimate compactness of the plate dynamics and, furthermore, provide a stabilizing effect for the flow to equilibria (in subsonic flows). As for (ii) mentioned above, results on ultimate compactness of plate dynamics, as well as convergence (for subsonic flows) of full flow-plate trajectories to points of equilibrium, were announced in \cite{chuey}, with a proof appearing in \cite{springer}. The recent treatment \cite{conequil1} addresses (iii); it is shown that by considering ``large" static damping and ``large" viscous damping---of the form $D\cdot [u_t+u]$ in the plate equation, $D$ sufficiently large---flow-plate trajectories with finite energy initial data converge to the set of equilibria. The techniques in \cite{conequil1} are rooted in Lyapunov methods and utilize {\em critically} the prior work in \cite{Memoires,springer,ryz,ryz2}. The key point here is that for large static and viscous damping, exponential decay of plate velocities provides global-in-time Hadmard continuity which permits approximation by smooth data (for which the desired result holds---Theorem \ref{regresult}).

The main goal of the present work is to obtain strong stability results ( strong convergence to a set of equilibria) {\it without accounting for smoothing effects in plate  dynamics}  and without  {\it without assuming large static damping}  imposed on the structure (plate). The primary mathematical issue to contend with is low regularity of the hyperbolic Neumann map (boundary into the interior)---i.e., the failure of the uniform Lopatinski conditions in dimensions higher than one. This precludes direct analysis of the coupling via the compactness properties of the aforementioned Neumann map. (Such an approach was used critically in previous analyses \cite{b-c,LBC96,chuey,springer,ryz,ryz2}.) In fact, while we still rely on these past developments, the key point of departure is that we can no longer afford to treat the problem component-wise. Rather, we must rely on global analysis involving relaxed long time  compactness  \cite{dafermos,slemrod}, which depends on uniform long   time invariance of higher energies \cite{conequil1}. This latter property is highly non-trivial due to the effects of the plate nonlinearity combined with the lack of damping exhibited by the flow.  In order to overcome this obstacle  we shall rely on  the idea introduced in \cite{springer} and used in \cite{delay} where structural equation under the  influence of a flow is 
represented (for large times) by  delay system. It is then shown in \cite{delay} that the corresponding decomposed part admits  a  {\it smooth}  global attracting set. In order to 
extract  a long time  convergence  valid for the {\it entire flow-structure system} it would suffice  to have a control of  $ L_1$ rate of convergence  of delayed plate solutions to such attractor. 
This brings forward a concept of {\it exponential attractor}, which by definition provides exponential   convergence rates, but it may loose "smoothness "  properties.
   The above leads to a well known  dichotomy between smoothness and rates of convergence to attracting sets.  Resolving this dichotomy lies in the heart of the  problem presented. A delicate argument based on  a careful analysis of  delayed dynamics along with  an uniform Hadamard continuity property  exhibited  by of a suitable decomposed  part of  
the full dynamics only,  allows to reconcile this dilemma. 
 As a consequence.  we show that, with a sufficiently large {\em viscous damping parameter}, the entire  flow-plate finite energy dynamics exhibits strong convergence properties. We emphasize that our results require {\em only viscous damping} in the structure and {\em do not make use of any advantageous parabolic (smoothing) effects} or additional static damping.
 One may argue with the requirement of sufficiently large size of the  damping (not needed in parabolic like problems), however the fact that we  de facto  stabilize delayed structure (with inherent instability)  justifies  mathematically \cite{pignotti,incase} a need for large viscous damping.

Our result provides mathematical insight to the panel flutter problem; in \cite{delay} the flow, while driving the plate dynamics, also contributes a stabilizing effect to the plate dynamics. Our present result indicates that for a panel in a subsonic flow, strong stability of the plate (due to mechanical damping) can be {\em transferred} to the flow, in some sense. (We note that in all real panels there is some degree of internal damping.) These results are in agreement with experimental and numerical studies wherein {\em divergence} (or `buckling') of panels is observed for subsonic flows, in contrast to chaotic or periodic end behavior ({\em i.e., flutter}) in the case of supersonic flow velocities.
We quote from a recent survey article authored by foremost aeroelastician E. Dowell \cite{dowellrecent} with regard to subsonic flows: ``...if the trailing edge, as well as the leading edge, of the panel is fixed then divergence (static aeroelastic instability) will occur rather than flutter."
In the analysis below, the subsonic nature of the flow is critical, as it provides a viable energy balance equation; this energy balance is not available for supersonic flows \cite{supersonic}. 

The engineering literature in \cite{berger,inconsistent,studyberger} analyzes the validity of Berger's plate approximation (originally appearing in \cite{berger}) and concludes that it is accurate in the case of clamped and hinged boundary conditions. The approximation is based on the assumption that the second stain invariant is negligible. Mathematically, the Berger dynamics are addressed in \cite{oldchueshov1,Memoires,oldchueshov2} (in the context of aeroelasticity), and also in the papers \cite{pelin1,pelin2}. Much of the abstract work in \cite{springer} (which is focused on the scalar von Karman nonlinearity) also applies to the Berger dynamics. The main results appearing in \cite{delay,fereisel,supersonic,webster} for the model presented above discuss and analyze both the Berger {\em and} von Karman nonlinearities, as well as other ``physical" nonlinearities satisfying a specific set of bounds which are referred to as {\em nonlinearities of physical type}).
\subsection{Outline}
The remainder of the paper is organized as follows:
\vskip.1cm
\noindent [\textsection 1] The remainder of the Introduction is devoted to a discussion of the mathematical model, including the relationship between our analysis and previous literature. We discuss the fundamental notions of well-posedness and associated energies, and describe some key results from \cite{delay,conequil1} which motivate this analysis. We then describe the main results. 
\vskip.1cm
\noindent [\textsection 2] Section \ref{techpres} describes the technical preliminaries needed to read the proof of the main results. This section can also be read as a detailed summary of the preceding work on the specific flow-plate model described herein. 

\vskip.1cm
\noindent [\textsection 3] Section \ref{prevlit} describes the supporting results from \cite{delay}, used here, which deal with the existence of a compact global attractor for the plate dynamics, as well as some weak stability properties.

\vskip.1cm
\noindent [\textsection 4] In Section \ref{highernorms00} we prove the primary {\em supporting} result upon which our later analysis is based: Theorem \ref{highernorms0}, which says that a global-in-time bound on the plate dynamics in higher norms yields the desired convergence to equilibria result for the full flow-plate dynamics. This section outlines a strategy of passing convergence results from {\em smooth} initial data onto finite energy initial data, and we describe in Section \ref{techdiff} the primary technical obstacles in showing the convergence to equilibria result for finite energy initial data in the absence of smooth effects or large static damping. 
\vskip.1cm
\noindent [\textsection 5] Section \ref{proofmain} begins by providing an overview of the main steps in the proof, and their relation to the established results in Section \ref{techpres}. (We believe this to be helpful to the reader, owing to the technical, and multistep nature of the proof.) The proof of the main result follow in substeps: a decomposition of plate solutions (Sections \ref{expsec} and \ref{regsec}), a key decomposition for the full flow-plate dynamics, an analysis of the decomposed dynamics, and a concluding argument which yields the final result.
\vskip.1cm
\noindent [\textsection 6] In Section \ref{openprob} we discuss the open problem of eliminating the size requirements on the damping.
\vskip.1cm
\noindent [\textsection 7] Readers will find the authors' acknowledgements in the final section. 

	\subsection{Notation}
	For the remainder of the text we write $\xb$ for $(x,y,z) \in \realsthree_+$ or $(x,y) \in \Omega \subset \realstwo_{(x,y)}$, as dictated by context. Norms $\|\cdot\|$ are taken to be $L_2(D)$ for a domain $D$. The symbols $\nu$ and $\tau$ will be used to denote the unit normal and tangent vectors to a given domain, again, dictated by context. Inner products in $L_2(\realsthree_+)$ are written $(\cdot,\cdot)$, while inner products in $L_2(\R^2\equiv\partial\R^3_+)$ are written $\left<\cdot,\cdot\right>$. Also, $ H^s(D)$ will denote the Sobolev space of order $s$, defined on a domain $D$, with $H^s_0(D)$ denoting the closure of $C_0^{\infty}(D)$ in the $H^s(D)$ norm
(which we denote by $\|\cdot\|_{H^s(D)}$ or $\|\cdot\|_{s,D}$).  When $s =0 $ we may abbreviate the notation
to $\| \cdot \| $. We will also utilize the homogenous Sobolev space $$W^k(\realsthree_+) \equiv \left\{\phi(\xb) \in L_2^{loc}(\realsthree_+) ~:~ \|\phi\|^2_{W^k}\equiv \sum_{j=0}^{k-1}\|\nabla \phi \|^2_{j,\realsthree_+} \right\}.$$ We make use of the standard notation for the trace of functions defined on $\realsthree_+$, i.e., for $\phi \in H^1(\realsthree_+)$, $tr[\phi]=\phi \big|_{z=0}$ is the restriction of $\phi$ on the plane $\{\xb:z=0\}$. (We use analogous notation for $tr[w]$ as the map from $H^1(\Omega)$ to $H^{1/2}(\partial \Omega)$.) Finally, we will denote a ball of radius $R$ centered at $c$ in a topological vector space $X$ by $B_X(c,R)$; when the center is the zero element we simply write $B_X(R)$.

\subsection{Mathematical Model and Natural Energies}
	The gas-flow environment is modeled by $\realsthree_+=\{(x,y,z): z > 0\}$. The plate
 is  immersed in an inviscid potential flow (over body) with velocity $U<1$ in the  $x$-direction. (Here we (i) normalize $U=1$ to be Mach 1, i.e., $0 \le U <1$ corresponds to a subsonic flow, and (ii) consider only the dynamics in the upper half space.) The panel is modeled by a bounded domain $\Omega \subset \reals^2_{(x,y)}=\{\xb: z = 0\}$ with smooth boundary $\partial \Omega = \Gamma$, and
the scalar function $u: \Omega \times \R_+ \to \reals$ represents the (transverse) displacement of the plate in the $z$-direction at $(x,y)$ at the moment $t$. For the flow component of the model we make use of linear
 potential theory \cite{bolotin,dowell1}, and the (perturbed) flow potential is given by $\phi:\realsthree_+\times \reals_+ \rightarrow \reals$. The strong coupling occurs (i) in the dynamic pressure term (RHS of the plate equation), which contains the acceleration potential of the flow, and (ii) in the downwash of the flow (Neumann condition), the latter including the material derivative of the structure.
	\begin{equation}\label{flowplate}\begin{cases}
u_{tt}+\Delta^2u+ku_t+f(u)= p_0+r_{\Omega}tr[\big(\partial_t+U\partial_x\big)\phi]& \text { in } \Omega\times (0,T),\\
u(0)=u_0;~~u_t(0)=u_1,\\
u=\Dn u = 0 & \text{ on } \Gamma\times (0,T),\\
(\partial_t+U\partial_x)^2\phi=\Delta \phi & \text { in } \realsthree_+ \times (0,T),\\
\phi(0)=\phi_0;~~\phi_t(0)=\phi_1,\\
\partial_z \phi = \big[(\partial_t+U\partial_x)u \big]_{\text{ext}} & \text{ on } \realstwo_{(x,y)} \times (0,T).
\end{cases}
\end{equation}
The notation $r_{\Omega}$ corresponds to the restriction (of the function's domain) to $\Omega$ (in the appropriate functional sense) for functions supported on $\partial \mathbb R_+^3$; correspondingly, we have the extension by zero for functions supported on $\Omega$ to the entirely of $\partial \mathbb R^3_+$ (in particular, for functions in $H_0^2(\Omega)$), denoted by $u_{\text{ext}}$. The term $p_0$ is a static pressure on on the plate. The nonlinearity of principal interest here is the Berger nonlinearity: 
\begin{equation}\label{berger}
f_B(u) = [b - ||\nabla u||^2]\Delta u.
\end{equation}
The parameter $b\ge 0$ is a physical parameter \cite{inconsistent,studyberger} which corresponds to in-plane bending or stretching.\footnote{In full generality, $b$ can be any real number; we choose the more mathematically interesting---non-dissipative---case, with $b\ge 0$.}

The  {\it plate energy} is defined as usual \cite{springer,lagnese}:
\begin{align}\label{plateenergy}
E_{pl}(u) =& \dfrac{1}{2}\big[\|u_t\|_{\Omega}^2+ \|\Delta u\|_{\Omega}^2 \big] +\Pi(u).
\end{align}
\noindent $\Pi(u)$ is a potential of the nonlinear and nonconservative forces,  given
by
 \begin{equation}\label{Pi}
 \Pi(u)=\Pi_B(u)=\dfrac{1}{4}||\nabla u||^4_{\Omega}-\dfrac{b}{2} ||\nabla u||^2_{\Omega}-\lb p_0,u\rb_{\Omega}.
 \end{equation}
 The natural energies associated with the subsonic wave and interactive dynamics are given below:
\begin{align}
 E_{fl}(\phi) = & ~\dfrac{1}{2}\big[\|\phi_t\|_{\R^3_+}^2-U^2\|\partial_x\phi\|_{\R^3_+}^2+\|\nabla \phi\|_{\R^3_+}^2\big];\label{flowenergy}\\
 E_{int}(u,\phi)=&~2U\lb tr[\phi],u_x\rb_{\Omega}\label{intenergy}.
\end{align}
 The total (unsigned) energy is then defined to be \begin{equation}\label{totalenergy}
\mathcal E(u(t),\phi(t))=\mathcal E(t) = E_{pl}(u(t))+E_{fl}(\phi(t))+E_{int}(u(t),\phi(t)).
\end{equation}
We will also need to consider {\em positive energies}, so we define
\begin{align}\label{posen}
\Pi_*(u)=\dfrac{1}{4}||\nabla u||^4,&~~~
E_*(u) =  \dfrac{1}{2}[||u_t||^2+||\Delta u||^2]+\Pi_*(u),\\
\mathcal E_*(u,\phi) =& E_*(u)+E_{fl}(\phi).
\end{align}

According to these norms, the natural energy space is then: 
\begin{equation}\label{energyspace1}
Y = Y_{pl} \times Y_{fl}\equiv \left(H_0^2(\Omega)\times L_2(\Omega)\right)\times\left(W^1(\realsthree_+) \times L_2(\realsthree_+)\right),
\end{equation}
taken with norm
\begin{equation}
||(u,v;\phi,\psi)||^2_Y = ||\Delta u||_{\Omega}^2+||v||_{\Omega}^2+||\nabla \phi||^2_{\realsthree_+}+||\psi||_{\realsthree_+}^2.
\end{equation}
We will also consider a stronger space:
\begin{equation}\label{energyspace2}
Y_s\equiv \left(H_0^2(\Omega)\times L_2(\Omega)\right)\times\left(H^1(\realsthree_+) \times L_2(\realsthree_+)\right).
\end{equation}
\subsubsection{Damping} The damping coefficient is given by $k \ge 0$, representing viscous damping (or dissipation) in the model. The can be viewed as intrinsic damping for the structure, or imposed as a velocity feedback control. For certain results (see \cite{delay}) we can consider $k=0$, however, for the main results below we will need specific assumptions on the size of the damping coefficient $k$.

Additionally, in some auxiliary results presented below (especially those related to \cite{conequil1}) we will refer to {\em static damping}---this occurs when the viscous damping $ku_t$ is accompanied by a term of the form $\beta u$. The resulting system, which will be referred to below, is 
\begin{equation}\label{withbeta}\begin{cases}
u_{tt}+\Delta^2u+\boxed{ku_t+\beta u}+f(u)= p_0+r_{\Omega}tr[\big(\partial_t+U\partial_x\big)\phi]& \text { in } \Omega\times (0,T),\\
u(0)=u_0;~~u_t(0)=u_1,\\
u=\Dn u = 0 & \text{ on } \Gamma\times (0,T),\\
(\partial_t+U\partial_x)^2\phi=\Delta \phi & \text { in } \realsthree_+ \times (0,T),\\
\phi(0)=\phi_0;~~\phi_t(0)=\phi_1,\\
\partial_z \phi = \big[(\partial_t+U\partial_x)u \big]_{\text{ext}} & \text{ on } \realstwo_{(x,y)} \times (0,T).
\end{cases}
\end{equation}
Note that, when $\beta=0$ this system collapses to \eqref{flowplate}, the principal model under consideration here.  In this case, we adapt the plate energy $E_{pl}$ and augment it by the quantity $\beta ||u||^2$:
$$E_{pl,\beta}(u)=\dfrac{1}{2}\left[||u_t||^2+\beta ||u||^2+||\Delta u||^2\right]+\Pi(u) = E_{pl}(u) +\frac{\beta}{2} ||u||^2 .$$ In fact, the primary results in \cite{conequil1} consider {\em large} static and viscous damping for the von Karman plate. The main aim of the present paper is to  obtain strong stability result {\it without} any static  damping, i.e. with $\beta =0 $. This will be achieved by asserting  that sufficient amount of stability can  be harvested from the flow alone.  However,  rigorous proof of this  physically plausible fact  is quiet involved and delicate. 
\subsection{Well-posedness and Fundamental Notions}
The dynamics discussed above, when cast in the appropriate framework, are well-posed \cite{b-c,LBC96,springer,jadea12,supersonic,webster}. (For precise definitions of {\em strong, mild, and weak} solutions for the flow-plate dynamics consult \cite{springer,jadea12,conequil1,webster}.)  The following specific well-posedness result is established in \cite{webster,jadea12}:
\begin{theorem}[{\bf Nonlinear Semigroup}]
\label{nonlinearsolution} Assume $U<1$, $p_0 \in L_2(\Omega)$.  Then for any $~T>0$  \eqref{flowplate} has a unique strong (resp. generalized---and hence weak) solution on $[0,T]$, denoted by $S_t (y_0) $, for initial data $y_0=(u_0,u_1;\phi_0,\phi_1) \in Y$. (In the case of strong solutions, the natural compatibility condition must be in force on the data

$ \partial_z \phi_0 = (u_1+Uu_{0x})_{\text{ext}}$.)
Moreover,  $(S_t, Y) $ is a (nonlinear) dynamical
system. Additionally, for $y_0 \in Y_s$, $(S_t,Y_s)$ is also a dynamical system.
 Weak (and hence generalized and strong) solutions satisfy  the following energy  {\it equality}.
 \begin{equation}\label{eident}\ds {\mathcal{E}}(t)+k\int_s^t \int_{\Omega} |u_t(\tau)|^2 d\Omega d\tau= {\mathcal{E}}(s)\end{equation} for $t>s$.
Moreover, the solution $S_t(y_0)$ is stable in the norm of $Y$, i.e., there exists a constant $C(||y_0||_{Y})$ such that for all $ t \geq 0 $ we have: 
\begin{equation}\label{stableS}
 \|S_t (y_0)\|_Y \leq C \left(\|y_0\|_{Y}\right).
 \end{equation}
In addition, the  semigroup $S_t$ is locally Lipschitz on $Y$
\begin{equation}\label{lip}
||S_t(y_1) - S_t(y_2) ||_Y \leq C (R,T) ||y_1-y_2||_Y,~~ \forall ||y_i||_Y \leq R,~~ t \leq T 
\end{equation}
\end{theorem}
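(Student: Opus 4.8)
The plan is to recast \eqref{flowplate} as an abstract semilinear problem $y_t = \mathbb{A}y + \mathcal{F}(y)$ on $Y$, where $\mathbb{A}$ is the generator of the \emph{linear} flow--plate dynamics (taking $f\equiv 0$, $p_0=0$) and $\mathcal{F}(u,v;\phi,\psi) = (0,\,p_0 - f(u);\,0,\,0)$ is a locally Lipschitz perturbation collecting the forcing and the plate nonlinearity. Here $\mathbb{A}(u,v;\phi,\psi) = \big(v,\, -\Delta^2 u - kv + r_\Omega tr[\psi + U\partial_x\phi];\, \psi,\, \Delta\phi - 2U\partial_x\psi - U^2\partial_{xx}\phi\big)$, with domain encoding the clamped conditions $u = \partial_\nu u = 0$ and the downwash condition $\partial_z\phi = (v + U\partial_x u)_{\text{ext}}$ on $z=0$ --- this last constraint being exactly the compatibility condition quoted for strong solutions, so strong solutions correspond to $y_0\in D(\mathbb{A})$. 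The proof then splits into: (I) $C_0$-generation for $\mathbb{A}$, (II) local well-posedness of the semilinear problem, (III) the energy identity and the ensuing global a priori bound, and (IV) the Lipschitz estimate.

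For (I): since $\|\partial_x\phi\|_{\realsthree_+}\le\|\nabla\phi\|_{\realsthree_+}$, subsonicity $U<1$ makes $E_{fl}$ equivalent to $\|(\phi,\psi)\|_{Y_{fl}}^2$ up to the factor $1-U^2$, and the half-space trace bound $\|tr[\phi]\|_{L_2(\Omega)} \le C\|\nabla\phi\|_{\realsthree_+}$ controls the indefinite $E_{int}$ by an arbitrarily small multiple of $\|\nabla\phi\|^2$ plus lower-order terms; with respect to the inner product associated to $\mathcal{E}$ (plus a small lower-order correction) one then checks that $\mathbb{A}$ is dissipative, the decisive point being the exact cancellation between the aeroelastic pressure $r_\Omega tr[(\partial_t+U\partial_x)\phi]$ paired against $v$ and the boundary term produced by Green's formula on $\Delta\phi$, leaving only $-k\|v\|^2\le0$. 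Maximality --- solvability of $(\lambda I - \mathbb{A})y = g$ for some $\lambda>0$ --- reduces to a coupled stationary system (biharmonic on $\Omega$ coupled to a uniformly elliptic Helmholtz-type problem on $\realsthree_+$ with Neumann data), solved by Lax--Milgram on $Y$, subsonicity again supplying coercivity of the flow part; Lumer--Phillips then furnishes $e^{t\mathbb{A}}$ on $Y$, and the identical construction on $Y_s$ gives the semigroup there.

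For (II)--(III): the Berger map $f_B(u)=(b-\|\nabla u\|^2)\Delta u$ (and the scalar von Karman nonlinearity) is bounded and locally Lipschitz from bounded sets of $H_0^2(\Omega)$ into $L_2(\Omega)$ --- since $\Delta:H_0^2\to L_2$ is bounded linear and $u\mapsto b-\|\nabla u\|^2$ is locally Lipschitz via $H_0^2(\Omega)\hookrightarrow H^1(\Omega)$ --- so $\mathcal{F}:Y\to Y$ is locally Lipschitz and a standard contraction/variation-of-parameters argument on $C([0,\tau];Y)$ produces a unique local mild solution $S_t y_0$, coinciding with the generalized (hence weak) solution and, for $y_0\in D(\mathbb{A})$, with the strong solution. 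Differentiating $\mathcal{E}(t)$ along strong solutions and substituting the PDE and boundary conditions, the coupling contributions telescope into $\frac{d}{dt}E_{int}$ and yield \eqref{eident}; this passes to generalized/weak solutions by density and the continuity of $\mathcal{E}$ on $Y$. Finally, since $\Pi_B$ is bounded below by the quartic $\Pi_*$ minus lower order and $E_{int}$, $\langle p_0,u\rangle$ are absorbed by Young's inequality, one obtains $c\|y\|_Y^2 - C \le \mathcal{E} \le C(1+\|y\|_Y^4)$; the conservation law $\mathcal{E}(t)\le\mathcal{E}(0)$ then gives \eqref{stableS} directly, whence the mild solution is global and $(S_t,Y)$, $(S_t,Y_s)$ are dynamical systems.

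For (IV): the difference $\bar y = S_t y_1 - S_t y_2$ solves the linear coupled system with plate forcing $-(f(u_1)-f(u_2))$; a Lyapunov functional equivalent to $\|\bar y\|_Y^2$ --- the linear energy of $\bar y$ together with a lower-order $\|\bar u\|_{L_2}^2$ correction needed to dominate the interaction term --- satisfies $\frac{d}{dt}V \le C_R V$ with $C_R$ given by the local Lipschitz constant of $f$ on $B_Y(R)$, and Gronwall yields \eqref{lip}. The main obstacle throughout is the \emph{unboundedness of the coupling}: for finite-energy flows $\phi_t\in L_2(\realsthree_+)$ has no $L_2(\Omega)$-trace, so the aeroelastic pressure is not a bounded perturbation of a decoupled dynamics (the failure of the uniform Lopatinski condition when $d>1$); it must stay inside the generator, and the whole argument rests on the exact cancellation in the dissipativity/energy computation --- transparent for strong solutions and delicate to propagate to weak ones. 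The secondary, equally essential, point is that $\mathcal{E}$ is \emph{not} sign-definite, so global existence is not automatic from conservation: taming $E_{int}$ and $-U^2\|\partial_x\phi\|^2$ uses $U<1$ crucially, exactly as the text stresses breaks down in the supersonic regime.
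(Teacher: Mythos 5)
Your proposal is correct and takes essentially the route the paper relies on: the paper itself defers the proof to \cite{webster} (``a suitable rescaled variant of monotone operator theory''), which is exactly your scheme --- the unbounded coupling is kept inside the domain of the linear generator, generation follows from Lumer--Phillips after an $\omega$-shift (your ``lower-order correction''), the Berger nonlinearity enters as a locally Lipschitz perturbation of $H_0^2(\Omega)$ into $L_2(\Omega)$, and globality, \eqref{stableS} and \eqref{lip} come from the energy identity combined with Lemmas \ref{energybound}--\ref{l:epsilon} and Gronwall. The one point to state carefully is that the linear quadratic form $E_{pl}^{lin}+E_{fl}+E_{int}$ is equivalent to $\|\cdot\|_Y^2$ only modulo a non-conserved lower-order term, so the linear semigroup is a quasi-contraction rather than a contraction --- which is precisely the ``rescaling'' and is consistent with, though slightly understated in, your step (I).
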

\begin{remark}
The proof of Theorem \ref{nonlinearsolution} is based on a suitable  rescaled variant of monotone operator theory   and is given in \cite{webster}. See also \cite{jadea12} for a different approach based on viscosity method. In the case when $U > 1$ the situation is more delicate and dealt with in \cite{supersonic}. In this latter case  (\ref{eident}) no longer holds and 
the inequality in (\ref{stableS}) is valid only locally, for $ t\leq T$ . 
This is  the result of the loss of dissipativity occurring in the supersonic case.
\end{remark}

For the above semigroup we introduce the dynamics operator $\bT:\mathscr{D}(\bT)\subset Y_s \to Y_s$. For its precise structure, we give reference to \cite{jadea12,supersonic,webster} which provide the details of the abstract model.\footnote{We suffice to say that Ball's method provides the generator of the nonlinear semigroup with appropriate dense domain $\cD(\bT)$.} A key property needed in this treatment is: \begin{equation}\label{domainprop}\cD(\bT) \subset  (H^4\cap H_0^2)(\Omega)\times H_0^2(\Omega) \times H^2(\realsthree_+)\times H^1(\realsthree_+). \end{equation}
\begin{remark}
As per \cite{springer,jadea12,webster} the natural {\em invariance} of the dynamics is with respect to the norm $||\cdot||_{Y}$. However, via the hyperbolic-type estimate 
\begin{equation}\label{oneusing*}||\phi(t)||_{L_2(\realsthree_+)} \le ||\phi_0||_{L_2(\realsthree_+)}+\int_0^t||\phi_t(\tau)||_{L_2(\realsthree_+)} d\tau,\end{equation} invariance in $Y_s$ can be recovered {\em on finite time intervals}.
\end{remark}

In order to describe the dynamics of the flow in the context of long-time behavior it is necessary to introduce local spaces, denoted by $Y_{fl,\rho}$: $$\|(\phi_0,\phi_1)\|_{Y_{fl},\rho}\equiv \int_{ K_{\rho} } |\nabla \phi_0|^2  + |\phi_1|^2 d\xb,$$  
where $K_{\rho} \equiv \{ \xb \in \mathbb R^3_{+}; |\xb |\leq \rho \} $. We denote by $Y_{\rho}$ the space $Y_{pl}\times Y_{fl,\rho}$, and we will consider convergence (in time) in $Y_{\rho}$ for any $\rho>0$. Such convergence will be denoted by {\em convergence in the sense of} $\widetilde Y$. 

By virtue of the Hardy inequality \cite[p.301]{springer}
$$\|\phi_0\|^2_{L_2(K_{\rho})} \le C_{\rho}\|\nabla \phi_0\|^2_{L_2(\realsthree_+)}$$ and hence
$$\|(\phi_0,\phi_1)\|_{Y_{fl},\rho}^2 \le  \|(\phi_0,\phi_1)\|_{H^1(K_{\rho})\times L_2(K_{\rho})}^2\le \|(\phi_0,\phi_1)\|_{Y_{fl}}^2.$$

\begin{remark}\label{flownorm} We clarify the relation between the various flow topologies: $H^1(\realsthree_+) \times L_2(\realsthree_+)$, $Y_{fl}$, and $Y_{fl,\rho}$. Initial flow data $(\phi_0,\phi_1)$ are typically chosen in $H^1(\realsthree_+) \times L_2(\realsthree_+)$. Solutions $(u(t),u_t(t);\phi(t),\phi_t(t))$ are global-in-time bounded in the topology of $Y$, but not necessarily global-in-time bounded in the full $Y_s$ norm (owing to the contribution of the $L_2$ norm of $\phi(t)$). Hence, for considerations involving $t \to \infty$, we require a sptially-localized flow perspective. By restricting to any ball $K_{\rho} \subset \realsthree_+$ for the flow dynamics we obtain global-in-time boundedness in the topology of $Y$ as well as local compactness results.  \end{remark}

By the boundedness in Theorem \ref{nonlinearsolution}, the topology corresponding to $Y_{\rho}$ (for any $\rho>0$) becomes a viable measure of long-time behavior which we will refer to as the {\em local energy sense}, and it is appropriate to take limits of the form ~$\displaystyle \lim_{t \to \infty} \|S_t(y_0)\|_{Y_\rho},~~$ for any $\rho>0$, where 
$\displaystyle \|\cdot \|_{Y_\rho} \equiv \|\cdot \|_{Y_{pl} \times Y_{fl,\rho}}$, and $S_t$ is the flow on $Y$ associated to the well-posedness in Theorem \ref{nonlinearsolution} above.

We now note the boundedness (from below) of the nonlinear energy.  Such a bound is necessary to obtain the boundedness of the semigroup quoted in Theorem \ref{nonlinearsolution} above. 
First, we have \cite[Lemma 5.2, p. 3136]{webster}:
\begin{lemma}\label{energybound} Let the hypotheses of Theorem \ref{nonlinearsolution} be in force. 
Then for generalized solutions to \eqref{flowplate}, there exist positive constants $c,C,$ and $M$ positive such that
\begin{equation}  c \mathcal E_*(t)-M_{p_0,b} \le \cE(t) \le C \mathcal E_*(t)+M_{p_0,b}  \end{equation}
\end{lemma}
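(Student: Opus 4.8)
The plan is to establish the two-sided comparison $c\,\mathcal{E}_*(t) - M_{p_0,b} \le \mathcal{E}(t) \le C\,\mathcal{E}_*(t) + M_{p_0,b}$ by directly comparing the definitions \eqref{totalenergy} and $\mathcal{E}_*(u,\phi) = E_*(u) + E_{fl}(\phi)$ term by term, isolating the potentially problematic contributions. The differences between $\mathcal{E}$ and $\mathcal{E}_*$ come from three sources: (i) the nonlinear/nonconservative potential, i.e. the discrepancy between $\Pi_B(u)$ in \eqref{Pi} and $\Pi_*(u) = \tfrac14\|\nabla u\|^4$, which amounts to the lower-order terms $-\tfrac{b}{2}\|\nabla u\|^2 - \langle p_0, u\rangle_\Omega$; (ii) the flow energy $E_{fl}(\phi)$, which appears identically in both $\mathcal{E}$ and $\mathcal{E}_*$ and so requires no estimate; and (iii) the interaction energy $E_{int}(u,\phi) = 2U\langle tr[\phi], u_x\rangle_\Omega$, which is present only in $\mathcal{E}$.

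First I would dispatch the structural lower-order terms. Since $\Omega$ is bounded and $u \in H^2_0(\Omega)$, the Poincaré and interpolation inequalities give, for any $\varepsilon>0$,
\begin{equation*}
\tfrac{b}{2}\|\nabla u\|^2 \le \varepsilon \|\Delta u\|^2 + C_{\varepsilon,b}, \qquad |\langle p_0, u\rangle_\Omega| \le \varepsilon\|\Delta u\|^2 + C_{\varepsilon}\|p_0\|^2,
\end{equation*}
so these terms are absorbed into a small multiple of $\|\Delta u\|^2 \le 2E_*(u)$ plus an additive constant of the form $M_{p_0,b}$. Next I would handle the interaction term, which is the genuinely coupled piece: by trace theory for $\phi \in H^1(\mathbb{R}^3_+)$ one has $\|tr[\phi]\|_{L_2(\Omega)} \le \|tr[\phi]\|_{H^{-1/2}(\mathbb{R}^2)}$-type bounds, but more simply $\|tr[\phi]\|_{L_2(\mathbb{R}^2)} \lesssim \|\phi\|_{H^1(\mathbb{R}^3_+)}$; combined with $\|u_x\|_{L_2(\Omega)} \le \varepsilon\|\Delta u\| + C_\varepsilon$ and Young's inequality this yields
\begin{equation*}
|E_{int}(u,\phi)| = 2U|\langle tr[\phi], u_x\rangle_\Omega| \le \varepsilon\|\Delta u\|^2 + \varepsilon\|\nabla\phi\|^2_{\mathbb{R}^3_+} + C_\varepsilon.
\end{equation*}
Here the subsonic condition $U<1$ is used, both in the boundedness of the coefficient $2U$ and, more essentially, in ensuring that $E_{fl}(\phi)$ in \eqref{flowenergy} is itself equivalent to $\|\nabla\phi\|^2 + \|\phi_t\|^2$ up to constants (since $1-U^2 > 0$ controls the indefinite term $-U^2\|\partial_x\phi\|^2$). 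Assembling these estimates with $\varepsilon$ chosen small relative to $1$ and $1-U^2$ gives both inequalities with explicit $c, C$ and an additive constant $M_{p_0,b}$ depending only on $\|p_0\|$ and $b$.

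The main obstacle — and the reason this is stated as a lemma rather than left to the reader — is the coupling between the interaction energy and the flow energy: one cannot bound $E_{int}$ using $E_{pl}$ alone, and must instead show that the full quadratic form $E_{pl} + E_{fl} + E_{int}$ (after removing the harmless lower-order potential terms) is coercive over $Y$. This requires that the cross term $2U\langle tr[\phi], u_x\rangle$ be strictly dominated by a combination of $\|\Delta u\|^2$ and the flow energy, which in turn hinges critically on $U<1$ so that the flow part $\tfrac12[\|\phi_t\|^2 - U^2\|\partial_x\phi\|^2 + \|\nabla\phi\|^2]$ retains a definite positive share of $\|\nabla\phi\|^2$ to spend on the cross term. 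For strong solutions the estimate then propagates to generalized solutions by the density/limiting argument underlying Theorem \ref{nonlinearsolution}; since all the bounds above are continuous in the $Y$-topology, passing to the limit is routine. (One also notes the estimate is consistent with, and in fact underlies, the a priori bound \eqref{stableS}.)
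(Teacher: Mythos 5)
Your overall architecture is right --- compare $\mathcal E$ and $\mathcal E_*$ term by term, note that $E_{fl}$ cancels, and control the lower-order potential terms and $E_{int}$ by small multiples of the positive energy plus a constant; the role of $U<1$ in keeping $E_{fl}$ coercive is also correctly identified. But two of your key absorption steps are not valid as written, and they are exactly the points the paper's Lemmas \ref{EIN} and \ref{l:epsilon} exist to handle.

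First, the inequalities $\tfrac{b}{2}\|\nabla u\|^2 \le \varepsilon\|\Delta u\|^2 + C_{\varepsilon,b}$ and $\|u_x\| \le \varepsilon\|\Delta u\| + C_\varepsilon$ are false: both sides are homogeneous of the same degree in $u$, so rescaling $u\mapsto\lambda u$ and letting $\lambda\to\infty$ forces $\|\nabla u\|^2\le\varepsilon\|\Delta u\|^2$, which fails once $\varepsilon$ is below the Poincar\'e constant. No additive constant can buy you an arbitrarily small multiplicative one here, and for the lower bound $\mathcal E \ge c\mathcal E_* - M$ with $b$ large you genuinely need smallness. The correct mechanism is to absorb these quadratic terms into the \emph{quartic} potential: $\tfrac{b}{2}\|\nabla u\|^2 \le \varepsilon\|\nabla u\|^4 + C_{\varepsilon,b} = 4\varepsilon\,\Pi_*(u) + C_{\varepsilon,b}$ by Young, and likewise for $\|u_x\|^2\le\|\nabla u\|^2$. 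This is precisely Lemma \ref{l:epsilon} and the reason the paper stresses that ``global-in-time boundedness of solutions cannot be obtained without accounting for nonlinear effects.'' (Your treatment of $\langle p_0,u\rangle$ is fine, since that term is linear in $u$.)

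Second, your trace bound $\|tr[\phi]\|_{L_2(\mathbb R^2)}\lesssim\|\phi\|_{H^1(\mathbb R^3_+)}$ uses the full $H^1$ norm of $\phi$, but $E_{fl}$ controls only $\|\nabla\phi\|_{\mathbb R^3_+}$ and $\|\phi_t\|_{\mathbb R^3_+}$ --- the energy space is the homogeneous $W^1(\mathbb R^3_+)$, and $\|\phi\|_{L_2(\mathbb R^3_+)}$ is \emph{not} available (cf.\ Remark \ref{flownorm}). The paper closes this gap with the Hardy inequality (Lemma \ref{EIN}), which controls the trace pairing over the bounded set $\Omega$ by $\|\nabla\phi\|_{\mathbb R^3_+}$ alone, yielding $|E_{int}|\le U\delta\|\nabla\phi\|^2_{\realsthree_+}+\tfrac{CU}{\delta}\|u_x\|^2_\Omega$; the $\|u_x\|^2_\Omega$ term is then fed into the quartic absorption above. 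With these two substitutions your argument goes through.
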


The proof of Lemma \ref{energybound} given in \cite{webster}  relies on two estimates contraolling lower frequencies. Since these estimates will be used frequently in the sequel, we shall formulate the relevant results. The first estimate controls interactive energy $E_{int}$ on the strength  of   Hardy inequality (see \cite[p. 301]{springer}),
\begin{lemma}\label{EIN}
 \begin{equation} |E_{int}(t)| \le U\delta \|\nabla \phi(t)\|_{\realsthree_+}^2+\frac{C\cdot U}{\delta}\|u_x(t)\|_{\Omega}^2, ~~\delta>0, \end{equation}
 \end{lemma}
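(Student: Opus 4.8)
The plan is to estimate the bilinear form \eqref{intenergy} directly by Cauchy--Schwarz and Young's inequality, and then to absorb the boundary trace of $\phi$ into the interior energy norm $\|\nabla\phi\|_{\realsthree_+}$ via the Hardy inequality.

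First, since $\Omega \subset \realstwo$ is bounded, I would fix once and for all a radius $\rho>0$ such that $\overline{\Omega}$ lies in the flat part of $\partial K_\rho$. Starting from $E_{int}(t) = 2U\langle tr[\phi(t)], u_x(t)\rangle_\Omega$, I apply Cauchy--Schwarz in $L_2(\Omega)$ and then Young's inequality with a free parameter $\varepsilon>0$ to obtain
$$|E_{int}(t)| \le U\varepsilon\,\|tr[\phi(t)]\|^2_{L_2(\Omega)} + \frac{U}{\varepsilon}\,\|u_x(t)\|_\Omega^2 .$$
The second term is already of the required form, so the whole matter reduces to controlling $\|tr[\phi(t)]\|_{L_2(\Omega)}$ by $\|\nabla\phi(t)\|_{\realsthree_+}$.

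For that step I would localize: $\phi(t)\in W^1(\realsthree_+)$ is not globally in $L_2$, so no global trace theorem applies, but on the bounded smooth half-ball $K_\rho$ the trace map $H^1(K_\rho)\to L_2(\Omega)$ is bounded, giving $\|tr[\phi(t)]\|_{L_2(\Omega)}^2 \le c_\rho\|\phi(t)\|^2_{H^1(K_\rho)}$; the Hardy inequality $\|\phi(t)\|^2_{L_2(K_\rho)} \le C_\rho\|\nabla\phi(t)\|^2_{\realsthree_+}$ quoted above then upgrades this to $\|tr[\phi(t)]\|^2_{L_2(\Omega)} \le C\|\nabla\phi(t)\|^2_{\realsthree_+}$ with $C = C(\Omega)$. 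Combining with the previous display and relabeling $\delta := C\varepsilon$ yields the stated inequality. The only nonroutine point is precisely this last estimate---extracting an $L_2(\Omega)$ trace bound from the purely homogeneous norm $\|\nabla\phi\|_{\realsthree_+}$---which is where the subsonic finite-energy framework ($\phi$ in the homogeneous space $W^1$) and the Hardy inequality are indispensable, and it is the same mechanism underlying Lemma \ref{energybound}. An alternative to the localization would be a one-dimensional fundamental-theorem-of-calculus argument in the $z$-variable against a cutoff supported near $z=0$, which again reduces to Hardy on a slab; the localized trace-theorem route seems cleanest.
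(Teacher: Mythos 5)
Your proof is correct and is essentially the argument the paper intends: the paper does not write out the details but attributes the estimate precisely to the Hardy inequality $\|\phi\|^2_{L_2(K_\rho)}\le C_\rho\|\nabla\phi\|^2_{L_2(\realsthree_+)}$, and your route (Cauchy--Schwarz/Young on $2U\langle tr[\phi],u_x\rangle_\Omega$, then localization to a half-ball containing $\overline{\Omega}$, the trace theorem on $K_\rho$, and Hardy to replace the $H^1(K_\rho)$ norm by $\|\nabla\phi\|_{\realsthree_+}$) is the standard way this is carried out in \cite{springer,webster}. No gaps.
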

The second one controls lower frequencies  \cite[p. 49]{springer} by exploiting nonlinear effects of potential energy:
\begin{lemma}\label{l:epsilon}
For any $u \in H^2(\Omega) \cap H_0^1(\Omega) $ and   $\epsilon > 0 $ there exists $M_{\epsilon} $ such that
$$\|u\|^2_{\Omega} \leq \epsilon [\|\Delta u\|^2_{\Omega}  + \Pi_*(u) ] + M_{\epsilon}.$$
\end{lemma}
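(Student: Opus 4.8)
The plan is to reduce the inequality to its lowest-order ingredient, the quantity $\|\nabla u\|_{\Omega}^2$, via the Poincar\'e inequality, and then to absorb this term into the quartic potential $\Pi_*(u)=\tfrac14\|\nabla u\|_{\Omega}^4$ at the expense of an $\epsilon$-dependent additive constant. The bilaplacian term $\|\Delta u\|_{\Omega}^2$ plays no role in the argument: it may simply be dropped (bounded below by $0$), and is kept in the statement only for the convenience of later applications.

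First I would invoke the Poincar\'e inequality on the bounded domain $\Omega$: since $u\in H_0^1(\Omega)$, there is a constant $C_P=C_P(\Omega)$ with $\|u\|_{\Omega}^2\le C_P\|\nabla u\|_{\Omega}^2$. Only the $H_0^1$-membership is used here; the full $H^2$-regularity assumed in the statement is not needed. Next, fix $\epsilon>0$. The cleanest finish is Young's inequality applied to the factorization $C_P\|\nabla u\|_{\Omega}^2=\big(\sqrt{\epsilon/2}\,\|\nabla u\|_{\Omega}^2\big)\big(C_P\sqrt{2/\epsilon}\big)$, which gives $C_P\|\nabla u\|_{\Omega}^2\le \tfrac{\epsilon}{4}\|\nabla u\|_{\Omega}^4+\tfrac{C_P^2}{\epsilon}=\epsilon\,\Pi_*(u)+\tfrac{C_P^2}{\epsilon}$. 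Combining with the Poincar\'e bound and the trivial inequality $\Pi_*(u)\le \|\Delta u\|_{\Omega}^2+\Pi_*(u)$ yields the claim with $M_{\epsilon}:=C_P^2/\epsilon$. Equivalently, one can argue by a case split on $\|\nabla u\|_{\Omega}^2\le R$ versus $\|\nabla u\|_{\Omega}^2>R$ with $R:=4C_P/\epsilon$: in the first regime $\|u\|_{\Omega}^2\le C_P R=:M_{\epsilon}$ and the bound is trivial, while in the second regime $\Pi_*(u)=\tfrac14\|\nabla u\|_{\Omega}^2\cdot\|\nabla u\|_{\Omega}^2\ge \tfrac{R}{4}\|\nabla u\|_{\Omega}^2$, so $\|u\|_{\Omega}^2\le C_P\|\nabla u\|_{\Omega}^2\le \tfrac{4C_P}{R}\Pi_*(u)=\epsilon\,\Pi_*(u)$.

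There is no genuine obstacle here; the lemma is elementary. The only conceptual point worth emphasizing is that the quartic growth of $\Pi_*$ dominates the quadratic $\|\nabla u\|_{\Omega}^2$ uniformly once $\|\nabla u\|_{\Omega}$ is large, so the ``exceptional'' set on which $\|u\|_{\Omega}^2$ is not yet controlled by $\epsilon\,\Pi_*(u)$ is bounded in $H_0^1(\Omega)$ and contributes only to the additive constant $M_{\epsilon}$. This is precisely the nonlinear-potential mechanism alluded to in the text, and is the feature that distinguishes the Berger/von Karman potentials from a purely linear plate energy.
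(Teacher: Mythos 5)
Your proof is correct for the lemma as it is needed in this paper, where $\Pi_*(u)=\tfrac14\|\nabla u\|_{\Omega}^4$ is the Berger positive potential: Poincar\'e gives $\|u\|_{\Omega}^2\le C_P\|\nabla u\|_{\Omega}^2$, and Young (or your equivalent case split on the size of $\|\nabla u\|_{\Omega}$) absorbs the quadratic term into $\epsilon\,\Pi_*(u)$ plus the constant $M_\epsilon=C_P^2/\epsilon$. The paper itself gives no proof; it cites the general version from Chueshov--Lasiecka, which is established there by a compactness/contradiction argument designed to cover potentials (such as the von Karman one, built from the Airy stress function) that do \emph{not} dominate $\|\nabla u\|_{\Omega}^2$ pointwise; in that setting the term $\|\Delta u\|_{\Omega}^2$ is genuinely needed, which is why it appears in the statement. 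Your direct argument is more elementary, yields an explicit constant, and in fact proves the sharper statement with the $\|\Delta u\|_{\Omega}^2$ term dropped --- which is exactly the content of the remark immediately following the lemma in the paper. The trade-off is that your route is specific to the quartic Berger potential and would not extend to the von Karman case, whereas the cited compactness argument does. One small point of hygiene: you correctly observe that only $u\in H_0^1(\Omega)$ is used; the $H^2$ hypothesis is there only so that $\|\Delta u\|_{\Omega}$ is finite and the right-hand side is meaningful for the general version.
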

\begin{remark}
In fact, in the case of Berger's nonlinearity Lemma \ref{l:epsilon} holds without the term $||\Delta u||^2_{\Omega} $. However, in the case of other nonlinearities, such as von Karman, the presence of $\Delta$ is indispensable. For this reason we cite a more general version of this lemma which can be used for von Karman models as well. \end{remark}

 {\em Note: global-in-time boundedness of solutions cannot be obtained without accounting for nonlinear effects.}
From the above lemmata  and energy inequality  we also  have (see \cite{jadea12,webster}):
\begin{lemma}\label{globalbound} Let the hypotheses of Theorem \ref{nonlinearsolution} be in force. Then any generalized (and hence weak) solution to \eqref{flowplate} satisfies the bound \begin{equation}\label{apr}
\sup_{t \ge 0} \left\{\|u_t\|_{\Omega}^2+\|\Delta u\|_{\Omega}^2+\|\phi_t\|_{\realsthree_+}^2+\|\nabla \phi\|_{\realsthree_+}^2 \right\}  \leq C\big(\|y_0\|_Y\big)< + \infty.\end{equation}
\end{lemma}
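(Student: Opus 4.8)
The plan is to derive \eqref{apr} from the energy identity \eqref{eident} together with the two-sided comparison in Lemma \ref{energybound} and the ``absorption of lower frequencies'' provided by Lemmata \ref{EIN} and \ref{l:epsilon}. The point is that the energy identity only controls the \emph{signed} energy $\mathcal E(t)$, which is not a norm, so the entire content of the lemma is that $\mathcal E(t)$ is equivalent, up to additive constants, to the \emph{positive} quantity $\mathcal E_*(t)$ appearing in \eqref{posen}; once that equivalence is in hand, conservation (more precisely, dissipativity) of $\mathcal E$ does the rest.

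First I would observe from \eqref{eident} with $s=0$ that $\mathcal E(t) \le \mathcal E(0)$ for all $t\ge 0$, since $k\ge 0$ makes the dissipation integral nonnegative. Next, by the left-hand inequality of Lemma \ref{energybound}, $c\,\mathcal E_*(t) \le \mathcal E(t) + M_{p_0,b} \le \mathcal E(0) + M_{p_0,b}$, and by the right-hand inequality $\mathcal E(0) \le C\,\mathcal E_*(0) + M_{p_0,b}$, with $\mathcal E_*(0)$ bounded by a continuous function of $\|y_0\|_Y$ (the quartic term $\Pi_*(u_0)=\frac14\|\nabla u_0\|^4$ is controlled by $\|\Delta u_0\|_\Omega^4 \lesssim \|y_0\|_Y^4$ via Poincaré on $H_0^2(\Omega)$). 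Combining, $\sup_{t\ge0}\mathcal E_*(t) \le C(\|y_0\|_Y) < \infty$. Finally, since $\mathcal E_*(t) = \frac12[\|u_t\|^2 + \|\Delta u\|^2] + \Pi_*(u) + E_{fl}(\phi)$ with $\Pi_*(u)\ge 0$, and since $E_{fl}(\phi) = \frac12[\|\phi_t\|^2 - U^2\|\partial_x\phi\|^2 + \|\nabla\phi\|^2] \ge \frac{1-U^2}{2}\big(\|\phi_t\|^2 + \|\nabla\phi\|^2\big)$ using $U<1$ and $\|\partial_x\phi\|\le\|\nabla\phi\|$, each of the four terms $\|u_t\|_\Omega^2,\|\Delta u\|_\Omega^2,\|\phi_t\|_{\realsthree_+}^2,\|\nabla\phi\|_{\realsthree_+}^2$ is bounded by a fixed multiple of $\mathcal E_*(t)$, giving \eqref{apr}.

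The only genuinely delicate point is the left inequality in Lemma \ref{energybound}, i.e., showing $c\,\mathcal E_*(t) - M_{p_0,b} \le \mathcal E(t)$, and this is where Lemmata \ref{EIN} and \ref{l:epsilon} enter; but since Lemma \ref{energybound} is already stated and available, I would simply invoke it. For completeness of the sketch: the difference $\mathcal E - \mathcal E_*$ consists of the interaction energy $E_{int}$, the indefinite term $-U^2\|\partial_x\phi\|^2$ inside $E_{fl}$, and the lower-order parts of $\Pi$ (namely $-\frac b2\|\nabla u\|^2 - \langle p_0,u\rangle$). The term $-U^2\|\partial_x\phi\|^2$ is absorbed into $\|\nabla\phi\|^2$ because $U<1$; $|E_{int}|$ is absorbed via Lemma \ref{EIN}, using a small $\delta$ against $\|\nabla\phi\|^2$ and throwing the resulting $\|u_x\|^2 \lesssim \|\Delta u\|^2$ term against the plate energy (or, more sharply, against $\Pi_*$ via Lemma \ref{l:epsilon}); and $\frac b2\|\nabla u\|^2 + |\langle p_0,u\rangle| \le \epsilon(\|\Delta u\|^2 + \Pi_*(u)) + M_\epsilon$ by Lemma \ref{l:epsilon} after Cauchy--Schwarz on the pressure pairing. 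Choosing $\delta$ and $\epsilon$ small enough that all absorbed quantities consume at most half of the corresponding positive terms yields the claimed one-sided bound with an additive constant $M_{p_0,b}$ depending only on $\|p_0\|_\Omega$ and $b$. I expect no obstacle beyond this bookkeeping, since the decisive structural facts — $U<1$ for the flow coercivity and the superquadratic growth of $\Pi_*$ for the plate — are exactly what Lemmata \ref{EIN}--\ref{l:epsilon} are designed to exploit.
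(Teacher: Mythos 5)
Your argument is correct and is exactly the route the paper takes (the paper simply cites \cite{jadea12,webster} after noting that the bound follows ``from the above lemmata and energy inequality''): dissipativity of $\mathcal E$ from \eqref{eident}, the two-sided equivalence $c\,\mathcal E_*-M\le\mathcal E\le C\,\mathcal E_*+M$ of Lemma \ref{energybound}, coercivity of $E_{fl}$ from $U<1$, and control of $\mathcal E_*(0)$ by $\|y_0\|_Y$. No gaps; your completeness sketch of Lemma \ref{energybound} also correctly identifies the roles of Lemmata \ref{EIN} and \ref{l:epsilon}.
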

\noindent Moreover, from energy inequality \ref{eident} and  (\ref{apr}) the following dissipation integral is finite (see \cite{springer,conequil1}):
\begin{corollary}\label{dissint} Let the hypotheses of Theorem \ref{nonlinearsolution} be in force and assume
$k> 0$; then the dissipation integral is finite. Namely, for a generalized solution to \eqref{flowplate}, we have $$  \int_0^{\infty} \|u_t(t)\|_{L_2(\Omega)}^2 dt \leq  K < \infty,$$
where $K$ depends on the particular trajectory.
\end{corollary}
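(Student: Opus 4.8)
The plan is to extract the estimate directly from the energy identity \eqref{eident}, using the uniform lower bound on the total energy supplied by Lemma \ref{energybound}. First I would apply \eqref{eident} with $s=0$: for any generalized (hence weak) solution to \eqref{flowplate} and any $t>0$,
\[
k\int_0^t \|u_t(\tau)\|_{L_2(\Omega)}^2\, d\tau = \mathcal E(0)-\mathcal E(t).
\]
Since $k>0$, it suffices to bound $\mathcal E(t)$ from below, uniformly in $t\ge 0$, and to observe that $\mathcal E(0)=\mathcal E(y_0)$ is finite for $y_0\in Y$.

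Next, by Lemma \ref{energybound} we have $\mathcal E(t)\ge c\,\mathcal E_*(t)-M_{p_0,b}$ for a constant $c>0$, so the claim reduces to $\mathcal E_*(t)\ge 0$. Here I would note $E_*(u)\ge 0$ because $\Pi_*(u)=\tfrac14\|\nabla u\|^4\ge 0$, and $E_{fl}(\phi)\ge 0$ because $U<1$ gives $\|\partial_x\phi\|_{\realsthree_+}^2\le \|\nabla\phi\|_{\realsthree_+}^2$ and hence $E_{fl}(\phi)\ge \tfrac12\|\phi_t\|_{\realsthree_+}^2+\tfrac12(1-U^2)\|\nabla\phi\|_{\realsthree_+}^2\ge 0$. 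This is precisely where subsonicity is used. Consequently $\mathcal E(t)\ge -M_{p_0,b}$ for all $t\ge 0$.

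Combining the two displays, $k\int_0^t \|u_t(\tau)\|_{L_2(\Omega)}^2\,d\tau \le \mathcal E(0)+M_{p_0,b}$ for every $t>0$; letting $t\to\infty$ via monotone convergence yields $\int_0^\infty \|u_t(\tau)\|_{L_2(\Omega)}^2\,d\tau \le K$ with $K=\big(\mathcal E(0)+M_{p_0,b}\big)/k$, which depends on the particular trajectory through $\mathcal E(0)$ (and thus through $\|y_0\|_Y$, using the energy bounds behind Lemma \ref{globalbound}). There is no real obstacle here: the only point requiring care is the uniform lower bound on $\mathcal E(t)$, which rests entirely on Lemma \ref{energybound} together with the nonnegativity of $E_{fl}$ in the subsonic regime and of the nonlinear potential $\Pi_*$; for non-smooth data one additionally invokes that generalized and weak solutions satisfy the energy equality \eqref{eident}, as asserted in Theorem \ref{nonlinearsolution}.
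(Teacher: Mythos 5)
Your proof is correct and follows essentially the same route as the paper: the energy identity \eqref{eident} combined with the uniform lower bound on $\mathcal E(t)$ coming from Lemma \ref{energybound} (equivalently, the a priori bound \eqref{apr}), together with the nonnegativity of $\mathcal E_*$ in the subsonic regime. No gaps.
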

We now state and discuss the stationary problem associated to \eqref{flowplate}, which has the form:
\begin{equation}\label{static}
\begin{cases}
\Delta^2u+f(u)=p_0(\xb)+Ur_{\Omega}tr[\partial_x \phi]& \xb \in \Omega\\
u=\Dn u= 0 & \xb \in \Gamma\\
\Delta \phi -U^2 \partial_x^2\phi=0 & \xb \in \realsthree_+\\
\Dz \phi = U\partial_x u_{\text{ext}}  & \xb \in \partial \realsthree_+
\end{cases}
\end{equation}
This problem has been studied in long-time behavior considerations for flow-plate interactions, most recently in \cite[Section 6.5.6]{springer}; in this reference, the following theorem is shown for subsonic flows (this is given as \cite[Theorem 6.5.10]{springer}):
\begin{theorem}\label{statictheorem}
Suppose $0 \le U <1$, $ k \geq 0$, with $p_0 \in L_2(\Omega)$. Then {\em weak} solutions $\left(u(\xb),\phi(\xb)\right)$ to \eqref{static} exist and satisfy the additional regularity property $$(u,\phi) \in (H^4\cap H_0^2)(\Omega) \times W^2(\realsthree_+).$$
Moreover, the stationary solutions mentioned above correspond to the extreme points of the  potential energy functional $$D(u,\phi) = \frac{1}{2}\|\Delta u\|_{\Omega}^2+\Pi(u)+\frac{1}{2}\|\nabla \phi\|_{\realsthree_+}^2-\dfrac{U^2}{2}\|\partial_x\phi\|_{\realsthree_+}^2 + U\lb\partial_x u, tr[\phi]\rb_{\Omega},$$ considered for $(u,\phi) \in H_0^2(\Omega) \times W^1(\realsthree_+)$. \end{theorem}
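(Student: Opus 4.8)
The plan is to decouple the stationary system, solve the two equations in sequence, recognize the resulting map as the gradient of $D$, obtain existence by the direct method of the calculus of variations, and then close an elliptic bootstrap for the regularity. First I would solve the flow equation for a frozen plate profile: for $u\in H_0^2(\Omega)$ the bilinear form $a(\phi,\psi)=\langle\nabla\phi,\nabla\psi\rangle_{\realsthree_+}-U^2\langle\partial_x\phi,\partial_x\psi\rangle_{\realsthree_+}$ is bounded on $W^1(\realsthree_+)$ and, since $U<1$, coercive there ($a(\phi,\phi)\ge(1-U^2)\|\nabla\phi\|^2_{\realsthree_+}$), while by Lemma \ref{EIN} the map $\psi\mapsto -U\langle\partial_x u_{\text{ext}},tr[\psi]\rangle$ is a bounded linear functional on $W^1(\realsthree_+)$. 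Lax--Milgram then produces a unique $\phi=\phi(u)\in W^1(\realsthree_+)$ solving the weak form of the flow equation and its Neumann condition in \eqref{static}, with $u\mapsto\phi(u)$ bounded and linear. Substituting turns \eqref{static} into the single semilinear plate equation $\Delta^2u+f(u)-U\,r_\Omega tr[\partial_x\phi(u)]=p_0$ with clamped boundary conditions.

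Next I would set up the variational picture on $X\equiv H_0^2(\Omega)\times W^1(\realsthree_+)$. A direct computation (Green's formula on $\realsthree_+$, integration by parts in $x$ over $\Omega$, using $v\in H_0^2(\Omega)$ and that $u_{\text{ext}}$ is supported in $\overline{\Omega}$) shows $D$ is Fréchet differentiable on $X$ and that $D'(u,\phi)=0$ is exactly the weak formulation of \eqref{static}; hence weak solutions of \eqref{static} are precisely the critical points of $D$, which is the asserted correspondence. Since for fixed $u$ the map $\phi\mapsto D(u,\phi)$ is a strictly convex quadratic (again because $U<1$) minimized at $\phi=\phi(u)$, one has $\inf_X D=\inf_u D(u,\phi(u))$. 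To run the direct method I would check: (i) \emph{coercivity} --- the flow part obeys $\tfrac12\|\nabla\phi\|^2-\tfrac{U^2}{2}\|\partial_x\phi\|^2\ge\tfrac{1-U^2}{2}\|\nabla\phi\|^2$, the cross term $U\langle\partial_x u,tr[\phi]\rangle$ is absorbed via Lemma \ref{EIN}, and the residual lower-order plate terms ($b\|\nabla u\|^2$, $\langle p_0,u\rangle$, and the $\|\nabla u\|^2$ left over from the absorption) are dominated by $\tfrac12\|\Delta u\|^2+\Pi_*(u)$ through Lemma \ref{l:epsilon}, yielding $D(u,\phi)\ge c\|(u,\phi)\|_X^2-C$; (ii) \emph{weak sequential lower semicontinuity} --- the nonnegative quadratic flow form and $\|\Delta u\|^2$ are weakly l.s.c., while $\|\nabla u\|^4$, $\|\nabla u\|^2$, the cross term and $\langle p_0,u\rangle$ are weakly continuous on bounded subsets of $X$ by the compact embedding $H_0^2(\Omega)\hookrightarrow H^1(\Omega)$ and weak continuity of $tr[\cdot]$. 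Then $D$ attains its infimum, producing a critical point and hence a weak solution. (Equivalently, the reduced plate equation can be solved by coercive pseudomonotone operator theory, the flow loading being a bounded linear and $f$ a compact perturbation of $\Delta^2$.)

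For the regularity claim, let $(u,\phi)$ be a weak solution. Since $u\in H_0^2(\Omega)$, the Neumann datum $U\partial_x u_{\text{ext}}$ lies in $H^1(\R^2)$; after the change of variables $x\mapsto x/\sqrt{1-U^2}$ converting $\Delta-U^2\partial_x^2$ into the Laplacian, half-space elliptic regularity for the Neumann problem gives $\phi\in W^2(\realsthree_+)$, so $tr[\partial_x\phi]\in H^{1/2}(\R^2)$ and $r_\Omega tr[\partial_x\phi]\in L_2(\Omega)$. Then the plate equation reads $\Delta^2u=p_0+U\,r_\Omega tr[\partial_x\phi]-f(u)\in L_2(\Omega)$ (for Berger, $f_B(u)=(b-\|\nabla u\|^2)\Delta u\in L_2(\Omega)$), and elliptic regularity for the clamped bilaplacian yields $u\in(H^4\cap H_0^2)(\Omega)$.

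The hard part will be the flow step together with closing the bootstrap: working in the homogeneous space $W^1(\realsthree_+)$, solving and regularizing the half-space Neumann problem for the rescaled operator up to the boundary, and --- crucially --- verifying that the aerodynamic loading $r_\Omega tr[\partial_x\phi(u)]$ genuinely lies in $L_2(\Omega)$ rather than merely a negative-order space, since this is exactly what makes the plate regularity argument close at $H^4$. The second delicate point is the coercivity of $D$, where one cannot use the linear plate energy alone but must exploit the nonlinear potential $\Pi_*$ through Lemma \ref{l:epsilon} (and where the sign convention $b\ge0$ is comfortable), to absorb the quadratic-in-$\|\nabla u\|$ terms generated by $E_{int}$, by $b$, and by $p_0$.
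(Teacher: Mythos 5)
Your proof is correct and follows essentially the same route as the paper, which cites \cite[Theorem 6.5.10]{springer} for existence and regularity and only sketches the variational part: smoothness and coercivity of $D$ on $H_0^2(\Omega)\times W^1(\realsthree_+)$ via Lemma \ref{l:epsilon} (with the interaction term absorbed as in Lemma \ref{EIN}), so that the minimum is attained and $\mathcal N\neq\emptyset$. Your Lax--Milgram reduction, direct-method details, and the $H^1(\R^2)\to W^2(\realsthree_+)\to L_2(\Omega)\to H^4(\Omega)$ elliptic bootstrap are exactly the standard filling-in of that cited argument.
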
 \noindent
The potential energy $D(u, \phi) $ is smooth on the space  $H_0^2(\Omega) \times W^1(\mathbb R^3_{+})$ and moreover
$$D(u,\phi) \geq c \left[\|\Delta  u\|^2_{\Omega}  + \|\nabla \phi\|_{\realsthree_+}^2\right] - C.$$
This latter property is a consequence of Lemma \ref{l:epsilon}.
Thus,  it achieves its minimum, and the extremal set of the functional $D$ is non-empty.
 {\em We denote the set of all stationary solutions (weak solutions to \eqref{static} above) as $\mathcal N$,} that is
 $$\mathcal N \equiv \{(\hat u,\hat \phi) \in H_0^2(\Omega) \times W^1(\realsthree_+): (\hat u,\hat \phi) ~\text{satisfy \eqref{static} variationally}\}.$$

	\subsection{Statement of Main Result}\label{mainresults} 
		\begin{theorem}\label{conequil}[Principal Result]
		
Let $0\le U<1$ and assume $p_0 \in L_2(\Omega)$ and $b\ge 0$. Assume $y_0=(u_0,u_1;\phi_0,\phi_1) \in Y$. Then there is a minimal damping coefficient $k_{min}>0$ (not depending on the particular solution) so that for $k\ge k_{min}>0$ any generalized solution $(u(t),\phi(t))$ to the system with localized initial flow data (i.e., the supports of $\phi_0$ and $\phi_1$ are contained in some $K_{\rho_0}$) has the property that \begin{align*}\lim_{t \to \infty} \inf_{(\hat u,\hat \phi) \in \mathcal N}\left\{\|u(t)-\hat u\|^2_{H^2(\Omega)}+\|u_t(t)\|^2_{L_2(\Omega)}+\|\phi(t)-\hat \phi\|_{H^1( K_{\rho} )}^2+\|\phi_t(t)\|^2_{L_2( K_{\rho} )} \right\}\\=0, \text{ for any }  \rho>0.\end{align*} \end{theorem}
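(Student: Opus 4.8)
The plan is to reduce the full flow--plate convergence to the single reduced statement of Theorem~\ref{highernorms0}: it suffices to produce a \emph{global-in-time} bound on the plate component in a higher topology, namely $\sup_{t\ge 0}\big(\|u(t)\|_{H^{2+\epsilon}(\Omega)}+\|u_t(t)\|_{H^{\epsilon}(\Omega)}\big)<\infty$ along generalized solutions with localized flow data, provided $k\ge k_{min}$. Once this uniform higher bound is in hand, Theorem~\ref{highernorms0} upgrades it to $\widetilde Y$-convergence of the entire flow--plate trajectory to $\mathcal N$. So the body of the proof is the construction of this higher-norm bound, and the tool is the delay representation of the structure inherited from \cite{springer,delay}: for $t$ large the plate satisfies, modulo a term that is exponentially small in $Y_{pl}$, a plate equation with memory/delay forcing built from the flow's ``downwash history.'' The flow contributes no damping, so the delay term is genuinely destabilizing; this is precisely where the \emph{size} requirement $k\ge k_{min}$ enters, in the spirit of \cite{pignotti,incase}.

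First I would set up the decomposition of the plate dynamics into a uniformly exponentially decaying piece and a smooth (higher-topology bounded) piece. Using the delay/memory form of the reduced plate equation, write $u=z+w$ where $z$ solves the linear damped plate equation $z_{tt}+\Delta^2 z+kz_t=0$ with the current data (hence, for $k$ large, $\|(z,z_t)\|_{Y_{pl}}\le Ce^{-\omega t}$ by standard analytic/exponential stability of the damped Kirchhoff plate), and $w$ carries the nonlinearity $f(u)$, the static load $p_0$, and the memory forcing. The key analytic input is that the memory term, when integrated against the exponentially stable plate semigroup, gains regularity: the aeroelastic potential $r_\Omega\,tr[(\partial_t+U\partial_x)\phi]$ is, by the subsonic trace/lifting theory used in \cite{springer} (and the Neumann map estimates quoted there), controllable in a topology one gains over $Y_{pl}$ once convolved against the $e^{-\omega t}$ semigroup. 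Running a bootstrap/fixed-point argument in the higher space $H^{2+\epsilon}(\Omega)\times H^{\epsilon}(\Omega)$ — exactly as in \cite{delay} for the delayed plate attractor — then produces a bounded absorbing ball for $w$ in that higher topology, uniformly in $t$, provided $k$ is large enough to dominate the delay norm. The Berger structure is used here: its cubic nonlocal nonlinearity $[b-\|\nabla u\|^2]\Delta u$ maps $H^{2+\epsilon}$ into $H^{\epsilon}$ boundedly with the right Lipschitz constants on bounded sets (this is the ``stark difference'' from von Karman alluded to in the introduction, where the Airy-stress term would obstruct the same estimate), so the fixed point closes.

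The two remaining ingredients are the uniform Hadamard continuity and the concluding limit argument. For the Hadamard estimate I would prove that the \emph{decomposed smooth part} $w$ depends on initial data in a manner that is continuous \emph{uniformly in $t\in[0,\infty)$} — this is where the exponential decay of $z$ is indispensable, since it is what kills the otherwise $T$-dependent blowup of the Lipschitz constant in \eqref{lip}; with this, convergence results valid for smooth data pass to finite-energy data by density, and Theorem~\ref{highernorms0}'s hypothesis is verified along \emph{all} generalized solutions with localized flow data. Finally, combine: the uniform higher bound plus the compact embedding $H^{2+\epsilon}(\Omega)\hookrightarrow H^2(\Omega)$ gives plate precompactness, the dissipation integral (Corollary~\ref{dissint}) forces $u_t(t)\to 0$ along a sequence, and feeding this into the reduced convergence theorem yields $\mathrm{dist}_{\widetilde Y}(S_t y_0,\mathcal N)\to 0$, which is exactly the claimed statement with the $K_\rho$-localized flow norms. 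I expect the main obstacle to be step two: quantifying how much regularity the subsonic aeroelastic forcing actually transfers to $w$ through the damped-plate convolution, and showing this gain is \emph{uniform in time} rather than merely on finite intervals — the low regularity of the hyperbolic Neumann map (failure of the uniform Lopatinski condition in dimension $>1$) is exactly what makes this delicate, and it is the reason the argument must be run on the decomposed dynamics and cannot be done componentwise on the full system.
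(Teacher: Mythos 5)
There is a genuine gap, and it sits at the heart of your step two. Your decomposition puts the \emph{entire} delay/memory forcing and the \emph{entire} nonlinearity into the $w$ equation, with $z$ solving the free damped plate. But the delay term $q^u$ is built from $M_\theta^2 u$ and, for a finite-energy trajectory, is controlled only by $\|\Delta u\|$ (cf.\ \eqref{qests}); its time derivative requires $\|u\|_{3}$ (cf.\ \eqref{qests*}), which is simply not available for a general weak solution. So the bootstrap for $w$ in a higher topology cannot close: the forcing is only $L_2(\Omega)$-valued and cannot be differentiated in time. Likewise, the claimed regularity gain from convolving the aeroelastic potential against the exponentially stable semigroup does not exist — the hyperbolic Neumann map loses $1/3$ derivative (Tataru), which is exactly the obstruction the paper identifies as precluding any componentwise flow-to-plate smoothing; the paper never recovers regularity from the coupling, it \emph{eliminates} the flow from the plate equation via the delay reduction (Theorem \ref{rewrite}) and then works entirely on the structure. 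Finally, your plan to verify the hypothesis of Theorem \ref{highernorms0} for the full trajectory $u$ contradicts the paper's explicit caution that the bound \eqref{thisone} holds only for ``certain'' solutions and certainly not for every weak solution.

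The paper's actual route repairs each of these points. Exploiting linearity of $q^{\cdot}$ and of $v\mapsto[b-\|\nabla u\|^2]\Delta v$, it splits $q^u=q^z+q^w$ and $f(u)=\mathcal F(z,u)+\mathcal F(w,u)$, keeps $q^z$, $-Uz_x$ and $\mathcal F(z,u)$ in the $z$ equation together with an \emph{artificial static damping} $+\beta z$ (subtracted back as $+\beta z$ in the $w$ equation), and gives $z$ the original data and $w$ null data. Large $k$ \emph{and} large $\beta$ make the $z$ system (which carries all the destabilizing terms) uniformly exponentially stable; the $w$ system, driven only by $q^w$, $\mathcal F(w,u)$, $p_0$ and the decaying $\beta z$, is then bootstrapped to $H^4\times H^2\times L_2$ by time differentiation plus elliptic recovery of $\|w\|_4$, with $\|w\|_3$ interpolated — possible precisely because the delay forcing now involves only the smooth component. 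One must then also decompose the \emph{flow}, $\phi=\phi^z+\phi^w$: the $(w,\phi^w)$ system is handled by Theorem \ref{highernorms0}, while the $(z,\phi^z)$ system — which is never smooth — is handled by uniform-in-time Hadamard continuity (available only because of the exponential decay furnished by $k,\beta$ large) and density of smooth data, and the two limit points are added using linearity of the stationary flow equation. Your outline captures the high-level dichotomy (exponentially decaying piece plus smooth piece, size restriction on $k$ coming from the delay), but the specific splitting you propose would fail at the smoothness step, and the missing flow decomposition and Hadamard/density argument for the rough component are not optional refinements — they are what makes the proof work.
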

\begin{remark}\label{kmin} In this result, and all results below, the minimal damping coefficient $k_{min}$ depends on the loading $p_0$ and $b$, as well as the domain $\Omega$, the flow support parameter $\rho_0$, and the unperturbed flow velocity $U$, but are independent on the particular initial data of the system. \end{remark}

If we make a further physical assumption that  $\mathcal N$ is an isolated set (e.g., finite), we have the following second main result as a corollary:
\begin{corollary}\label{improve} Assume that $\mathcal N$ is an isolated set.
Let the hypotheses of Theorem \ref{conequil} be in force; then for any generalized solution $(u,\phi)$ to \eqref{flowplate} (with localized flow data, as above), there exists a solution $(\hat u, \hat \phi)$ satisfying \eqref{static} such that
\begin{align*}\lim_{t \to \infty} \left\{\|u(t)-\hat u\|^2_{H^2(\Omega)}+\|u_t(t)\|^2_{L_2(\Omega)}+\|\phi(t)-\hat \phi\|_{H^1( K_{\rho} )}^2+\|\phi_t(t)\|^2_{L_2( K_{\rho} )} \right\}\\=0, \text{ for any }  \rho>0.\end{align*}
\end{corollary}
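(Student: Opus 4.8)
The plan is to deduce Corollary \ref{improve} from Theorem \ref{conequil} by a standard argument that upgrades ``convergence to a set'' to ``convergence to a point'' once the set is known to be isolated (in fact discrete, since $\mathcal N$ is a closed subset of a metrizable space and being isolated/discrete and closed means each point has a neighborhood meeting $\mathcal N$ only at that point). First I would fix a generalized solution $(u(t),\phi(t))$ with localized flow data $\phi_0,\phi_1$ supported in $K_{\rho_0}$, and observe that Theorem \ref{conequil} already gives us $\mathrm{dist}_{Y_\rho}\big((u(t),u_t(t);\phi(t),\phi_t(t)),\,\widehat{\mathcal N}\big)\to 0$ as $t\to\infty$ for every $\rho>0$, where $\widehat{\mathcal N}\equiv\{(\hat u,0;\hat\phi,0):(\hat u,\hat\phi)\in\mathcal N\}$ is the stationary set lifted to the phase space (the velocity components are zero because stationary solutions have $u_t=\phi_t=0$). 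The statement to prove is precisely that this limit set is a single point.

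The key steps, in order: \emph{(1)} Reduce to a precompactness statement for the trajectory. The convergence in Theorem \ref{conequil} says the orbit $\{S_t(y_0)\}_{t\ge T}$ gets arbitrarily close, in each local topology $Y_\rho$, to the set $\widehat{\mathcal N}\cap B$ for a bounded ball $B$ (boundedness coming from \eqref{apr}/Lemma \ref{globalbound}); since $\widehat{\mathcal N}$ is isolated, $\widehat{\mathcal N}\cap B$ is a finite set of points $p_1,\dots,p_m$. \emph{(2)} Separation of the isolated points: choose $r>0$ so small that the balls $B_{Y_\rho}(p_i,2r)$ are pairwise disjoint for some fixed $\rho$ (one can always arrange this for a single large $\rho$, after which smaller $\rho$ only weakens the topology but the \emph{same} finite set is being approached). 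By the convergence result there is $T_0$ with $S_t(y_0)\in\bigcup_i B_{Y_\rho}(p_i,r)$ for all $t\ge T_0$. \emph{(3)} A connectedness / no-jump argument: the map $t\mapsto S_t(y_0)$ is continuous into $Y_\rho$ (this follows from the well-posedness in Theorem \ref{nonlinearsolution}, the local Lipschitz bound \eqref{lip}, and the fact that the $Y\to Y_\rho$ inclusion is continuous, plus the time-continuity of mild solutions), so the connected set $\{S_t(y_0):t\ge T_0\}$ cannot meet two disjoint open balls; hence there is a single index $i=i_0$ with $S_t(y_0)\in B_{Y_\rho}(p_{i_0},r)$ for all $t\ge T_0$. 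Combined with the convergence of the distance to $\widehat{\mathcal N}$, this forces $S_t(y_0)\to p_{i_0}$ in $Y_\rho$ for that fixed $\rho$. \emph{(4)} Upgrade to all $\rho$: for $\rho'<\rho$ the $Y_{\rho'}$ norm is dominated by the $Y_\rho$ norm, so convergence persists; for $\rho'>\rho$ one repeats steps (2)--(3) with $\rho'$ in place of $\rho$ and notes the limit point must be the same $p_{i_0}$ (its plate component is already pinned down, and the flow components of stationary solutions are determined on all of $\realsthree_+$ by Theorem \ref{statictheorem}, so a stationary solution agreeing with $\hat u_{i_0}$ on the plate and with $\hat\phi_{i_0}$ on $K_\rho$ agrees with it everywhere). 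Writing $p_{i_0}=(\hat u,0;\hat\phi,0)$ with $(\hat u,\hat\phi)\in\mathcal N$ then yields exactly the claimed limit, for every $\rho>0$.

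The main obstacle I expect is \emph{(3)}, the connectedness argument: one must be careful that ``isolated'' as stated for $\mathcal N$ (a subset of $H_0^2(\Omega)\times W^1(\realsthree_+)$) transfers correctly to isolatedness of $\widehat{\mathcal N}$ inside the \emph{local} spaces $Y_\rho$, and that the continuity of $t\mapsto S_t(y_0)$ into $Y_\rho$ is genuinely available (it is, since generalized solutions are strong limits of strong solutions, which are continuous in $Y$, uniformly on compact time intervals by \eqref{lip}, and $Y\hookrightarrow Y_\rho$ continuously). A secondary subtlety is the boundedness needed to make $\widehat{\mathcal N}\cap B$ finite rather than merely discrete; this is supplied by Lemma \ref{globalbound}, which confines the trajectory to a fixed ball in the plate and (local) flow variables for all $t\ge 0$. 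Once these two points are in place, the remaining steps are routine topology, and no new PDE estimates beyond those already cited are required.
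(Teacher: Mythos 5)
Your overall strategy is the standard one and is exactly what the paper (which states Corollary \ref{improve} without proof) implicitly relies on: once the trajectory converges to the stationary set and that set is discrete, continuity of $t\mapsto S_t(y_0)$ into $Y_\rho$ plus a connectedness argument pins the limit to a single point. Steps (3) and (4) are fine: time-continuity in $Y_\rho$ follows from $C([0,T];Y)$-regularity of generalized solutions and the continuous embedding $Y\hookrightarrow Y_\rho$, and the flow component of a stationary solution is uniquely determined by its plate component (the form $\|\nabla\phi\|^2-U^2\|\partial_x\phi\|^2$ is coercive on $W^1(\realsthree_+)$ for $U<1$), so the limit points for different $\rho$ are consistent.

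The one step that is wrong as literally written is in (1)--(2): ``since $\widehat{\mathcal N}$ is isolated, $\widehat{\mathcal N}\cap B$ is a finite set of points.'' In an infinite-dimensional space a bounded, closed, discrete set need not be finite (an orthonormal sequence is a counterexample), so boundedness of the orbit via Lemma \ref{globalbound} does not by itself deliver finiteness, and without finiteness you cannot choose a uniform separation radius $r$. The gap is easily repaired with material already in the paper: Theorem \ref{statictheorem} gives $\mathcal N\subset (H^4\cap H_0^2)(\Omega)\times W^2(\realsthree_+)$ (with bounds controlled by the energy norm), so $\mathcal N\cap B$ is precompact in $Y_{pl}\times H^1(K_\rho)$, and a precompact discrete set is finite. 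Alternatively---and more cleanly---replace (1)--(3) by the observation that the proof of Theorem \ref{conequil} shows the orbit is precompact in $\widetilde Y$ with all subsequential limits in $\widehat{\mathcal N}$; hence the $\omega$-limit set is nonempty, compact, and connected in the metrizable space $\widetilde Y$, and a connected subset of a discrete set is a singleton, which yields convergence to a single stationary point directly. Either repair makes your argument complete; no new PDE estimates are needed.
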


\begin{remark}\label{sard}
 For given loads $b$ and $p_0$, the set of solutions is generically finite. This is to say that there is an open dense set $\mathcal R \subset L_2(\Omega)\times \mathbb R$ such that  if $(p_0,b) \in \mathcal R$ then the corresponding set of stationary solutions $\mathcal N$ is finite. This follows from the Sard-Smale theorem, as shown in \cite[Theorem 1.5.7 and Remark 6.5.11]{springer}.
\end{remark}

In addition to the primary results, we mention some results obtained along the way which are of interest in their own right. Owing to the necessary background information to precisely state these results, at this stage we describe them only informally. 

First, in Section \ref{highernorms00} (following a few technical descriptions and lemmas) we prove the main supporting result: Theorem \ref{highernorms0}. This result says that global-in-time bounds on plate dynamics in higher norms yields the desired convergence of the flow-plate dynamics at the finite-energy level (in $Y_{\rho}$, for all $\rho>0$). It appeared in \cite{conequil1} as a step in the proof of the main result, but was not presented independently. 

In Section \ref{propagate} we state and prove Theorem \ref{highernorms}. This result shows that for ``smooth" initial data for the full flow-plate dynamics we can propagate regularity on the finite time horizon. This result (or slightly modified versions of it) will be used many times below. For the plate dynamics alone

this regularity can be propagated on the infinite time horizon (with corresponding uniform bound). Moreover, this result {\em does not depend on the size of the damping}.

\section{Technical Preliminaries}\label{techpres}
In this section we discuss a few key technical results and constructions which will be used critically later. References are provided for each of the considerations below.
\subsection{Flow Potentials with Given Neumann Data}
In what follows it will be necessary to consider the flow equation with {\em given} Neumann data. Consider the problem:
\begin{equation}\label{floweq*}
\begin{cases}
(\partial_t+U\partial_x)^2\phi=\Delta \phi & \text{ in }~\mathbb R_+^3\\
\partial_{\nu} \phi\Big|_{z=0} = h(\xb,t) & \text{ in }~\mathbb R^2\\
\phi(t_0)=\phi_0;~~\phi_t(t_0)=\phi_1
\end{cases}
\end{equation}
We have the following theorem from \cite{b-c-1,springer,miyatake1973}:
\begin{theorem}\label{flowpot}
Assume $U\ge 0$, $U\ne 1$; take $(\phi_0,\phi_1) \in H^1(\realsthree)\times L_2(\realsthree).$ If \newline$h \in C\left([t_0,\infty);H^{1/2}(\mathbb R^2)\right)$ then \eqref{floweq*} is well-posed (in the weak sense) with 
$$\phi \in C\left([t_0,\infty);H^1(\realsthree_+)\right),~~\phi_t \in C\left( [t_0,\infty);L_2(\realsthree_+)\right).$$
\end{theorem}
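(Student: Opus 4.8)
\emph{Proof strategy.} The plan is to eliminate the convective derivative by a Galilean-type change of variables, reducing \eqref{floweq*} to the ordinary wave equation on the half-space with prescribed Neumann data, and then to invoke the classical solution theory for that problem. Since $\realsthree_+$ is invariant under translation in the $x$-variable, set $\tilde\phi(x,y,z,t):=\phi\big(x+U(t-t_0),y,z,t\big)$. A direct computation gives $\partial_t\tilde\phi=\big[(\partial_t+U\partial_x)\phi\big]\big(x+U(t-t_0),y,z,t\big)$, hence $\partial_t^2\tilde\phi=\big[(\partial_t+U\partial_x)^2\phi\big](\cdot)$, while $\Delta\tilde\phi=[\Delta\phi](\cdot)$; thus $\phi$ solves \eqref{floweq*} if and only if $\tilde\phi$ solves $\partial_t^2\tilde\phi=\Delta\tilde\phi$ in $\realsthree_+$ with Neumann data $\tilde h(x,y,t):=h\big(x+U(t-t_0),y,t\big)$ and initial data $\tilde\phi(t_0)=\phi_0$, $\partial_t\tilde\phi(t_0)=\phi_1+U\partial_x\phi_0$. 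Because translation in $x$ acts isometrically and strongly continuously on $H^{1/2}(\R^2)$, $H^1(\realsthree_+)$ and $L_2(\realsthree_+)$, the transformed data again satisfy the hypotheses, $(\phi_0,\phi_1+U\partial_x\phi_0)\in H^1(\realsthree_+)\times L_2(\realsthree_+)$ and $\tilde h\in C([t_0,\infty);H^{1/2}(\R^2))$, and conversely any solution $\tilde\phi\in C(H^1)\cap C^1(L_2)$ of the reduced problem yields, via $\phi(x,y,z,t)=\tilde\phi\big(x-U(t-t_0),y,z,t\big)$, a solution of \eqref{floweq*} with exactly the asserted regularity.

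For the reduced (non-convected) problem I would use a boundary lifting together with variation of parameters. Let $A=-\Delta+I$ on $L_2(\realsthree_+)$ with domain $H^2(\realsthree_+)$ under homogeneous Neumann conditions at $z=0$; $A$ is positive self-adjoint and generates cosine and sine operators $C(t),S(t)$, with $C(t):D(A^{1/2})\to D(A^{1/2})$ and $S(t):L_2(\realsthree_+)\to D(A^{1/2})$, where $D(A^{1/2})=H^1(\realsthree_+)$. Let $N$ denote the elliptic Neumann map for $A$ (built explicitly via the Fourier transform in $(x,y)$): $Ng$ solves $(-\Delta+I)Ng=0$, $\partial_\nu(Ng)\big|_{z=0}=g$, and $N:H^{1/2}(\R^2)\to H^2(\realsthree_+)=D(A)$ is bounded, so $AN:H^{1/2}(\R^2)\to L_2(\realsthree_+)$ is bounded. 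Rewriting the equation as $\tilde\phi_{tt}+A(\tilde\phi-N\tilde h)-(\tilde\phi-N\tilde h)=0$, so that all spatial operators act on functions carrying homogeneous Neumann data, leads to the representation
\begin{equation*}
\tilde\phi(t)=C(t-t_0)\phi_0+S(t-t_0)\big(\phi_1+U\partial_x\phi_0\big)+\int_{t_0}^{t}S(t-s)\,AN\tilde h(s)\,ds+(\text{lower-order Duhamel term}).
\end{equation*}
Since $s\mapsto AN\tilde h(s)$ is continuous into $L_2(\realsthree_+)$, standard properties of $S(t)$ place the Duhamel integral in $C([t_0,\infty);H^1(\realsthree_+))\cap C^1([t_0,\infty);L_2(\realsthree_+))$, and the free terms lie in the same class; this is the asserted regularity. (Equivalently, one may lift $\tilde h$ by a Fourier multiplier $R\tilde h\in C([t_0,\infty);H^2(\realsthree_+))$ with $\partial_\nu(R\tilde h)|_{z=0}=\tilde h$ and solve for $\psi=\tilde\phi-R\tilde h$, which satisfies homogeneous Neumann conditions and is forced by $\Delta(R\tilde h)$, merely continuous into $L_2$ in $t$; this is again handled at the finite-energy level by Duhamel.) Either route is standard and constitutes the content of the cited references \cite{b-c-1,miyatake1973,springer}. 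Uniqueness follows from the energy identity for $\tilde\phi$ (for the convected equation, equivalently, from $E_{fl}$, which for $U\ne1$ is equivalent to the $Y_{fl}$-norm): the difference of two solutions with the same data has vanishing energy, hence is zero.

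The main obstacle, and the reason I prefer the variation-of-parameters representation to a naive lifting, is the low temporal regularity of the data: $h$ is only continuous --- not Lipschitz or $C^1$ --- in $t$, so no lifting of $\tilde h$ can be differentiated twice in time, and one cannot reduce to a classical inhomogeneous wave problem with an $L_2$-valued forcing in $\tilde\phi_{tt}$. The representation above avoids this entirely, as $\tilde h$ enters only through $N\tilde h$ under the time integral with no time derivatives on it; the remaining work is to justify that this formula is indeed the \emph{unique weak} solution --- done by testing against smooth space-time functions, integrating by parts in space using $\partial_\nu(N\tilde h)|_{z=0}=\tilde h$ to recover the boundary term, and approximating $\tilde h$ by data smooth in $t$ for which $\tilde\phi$ is classical and passing to the limit. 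The residual points (strong $t$-continuity of each term, invariance of the weak formulation under the change of variables, and the energy estimate for uniqueness) are routine.
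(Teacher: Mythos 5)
The paper does not prove Theorem \ref{flowpot}; it quotes it from \cite{b-c-1,springer,miyatake1973}, so your proposal is an attempt to supply a proof of an externally cited result. Your Galilean reduction $\tilde\phi(x,y,z,t)=\phi(x+U(t-t_0),y,z,t)$ is correct and unobjectionable: it converts the convected equation to the free wave equation on the half-space, the transformed data remain in the right classes, and translations act isometrically and strongly continuously, so the whole theorem does reduce to the non-convected half-space Neumann problem. The problem is the step where you solve that reduced problem.

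The assertion that $AN:H^{1/2}(\R^2)\to L_2(\realsthree_+)$ is bounded is false, and the "standard properties of $S(t)$" argument collapses with it. Indeed $Ng\in H^2(\realsthree_+)$, but $\partial_\nu(Ng)|_{z=0}=g\neq 0$, so $Ng\notin D(A)$; the only meaningful interpretation of $ANg$ is via the extension $A:L_2\to[D(A)]'$, and a Green's identity gives $\langle ANg,v\rangle=-\langle g,v|_{z=0}\rangle$ for $v\in D(A)$, i.e.\ $ANg$ is a single-layer distribution supported on $\{z=0\}$, nowhere near $L_2(\realsthree_+)$. (If instead you read $A$ as the differential expression, $(-\Delta+I)Ng=0$ and the boundary data never enters the formula.) What the abstract cosine-operator calculus actually yields is this: $Ng\in D(A^{s})$ only for $s<3/4$ (the homogeneous Neumann condition obstructs higher fractional powers), so the Duhamel term $A\int_{t_0}^tS(t-s)N\tilde h(s)\,ds$ lands in $C([t_0,\infty);H^{1/2-\epsilon}(\realsthree_+))$ --- half a derivative short of the claimed $C(H^1)$. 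Recovering that missing half derivative from the $H^{1/2}(\R^2)$ spatial regularity of $h$ is precisely the nontrivial "hidden regularity" content of the theorem: on the half-space it is done by explicit Fourier analysis in $(x,y,t)$, where the $H^{1/2}$ tangential regularity compensates the $(\tau^2-|\xi|^2)^{-1/2}$ singularity of the Neumann-to-solution symbol at glancing frequencies (this is Miyatake's argument; for general domains the loss is genuine, cf.\ the paper's remark citing \cite{LT,tataru}). Your "main obstacle" paragraph identifies only the low \emph{time} regularity of $h$ and treats the spatial side as routine; it is the spatial side that carries the whole difficulty, and the proposal as written does not close it. (A further minor point: your uniqueness argument via positivity of $E_{fl}$ only works for $U<1$, whereas the theorem allows $U>1$; uniqueness should instead be run on the Galilean-transformed problem, whose standard wave energy is coercive for all $U$.)
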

\begin{remark}
In fact, a stronger regularity result is available. Finite energy ($H^1(\Omega) \times L_2(\Omega)$) solutions are obtained with $h\in H^{1/3}((0,T) \times \mathbb R^2)$ \cite{LT,tataru}. However, the corresponding estimate does not have sufficiently  good control of  the dependence on time $T$, and hence is of limited applicability in the present (stabilization) context. 
\end{remark}

\subsection{Flow Superposition and Compactness Criteria}
We may decompose the flow problem from \eqref{floweq*} into two pieces corresponding to zero Neumann data, and zero initial data, respectively:
\begin{equation}\label{flow1}
\begin{cases}
(\partial_t+U\partial_x)^2\phi^*=\Delta \phi^*  & ~\text{ in }~ \realsthree_+ \times (0,T) \\
\phi^*(0)=\phi_0; ~~\phi_t(0)=\phi_1 \\
\Dn \phi^* = 0 & ~\text { in } ~\partial \realsthree_+ \times (0,T)
\end{cases}
\end{equation}
\begin{equation}\label{flow2}
\begin{cases}
(\partial_t+U\partial_x)^2\phi^{**}=\Delta \phi^{**}  & ~\text{ in }~ \realsthree_+ \times (0,T) \\
\phi^{**}(0)=0; ~~\phi_t^{**}(0)=0 \\
\Dn \phi^{**} = h(\xb,t) & ~\text { in } ~\partial \realsthree_+ \times (0,T)
\end{cases}
\end{equation}
where we will later take:
\begin{equation}\label{h}
h(\xb, t) \equiv -[u_t + U u_x]_{\text{ext}} \in C(L_2(\mathbb R^2))
\end{equation}
Following the analyses in \cite{b-c,b-c-1,springer,ryz,ryz2} we denote the solution to \eqref{flow1} as $\phi^*$ and the solution to \eqref{flow2} as $\phi^{**}$; then, the full flow solution $\phi$ coming from \eqref{floweq*} has the form $$\phi(t)=\phi^*(t)+\phi^{**}(t).$$ 
\begin{remark}
The analysis of $\phi^*$ below is identical to that given in \cite{b-c,b-c-1,springer,ryz}. However, the treatment of the second part
$\phi^{**} (t)$, which corresponds to the  hyperbolic Neumann map,  is very different than these references due to the loss of a derivative in the Neumann map. Indeed, with rotational inertia in place ($u_t \in H^1(\Omega)$) one has (for finite energy solutions)  $h \in C(0, T; H^1(\Omega) ) $.
On the other hand from  \cite{miyatake1973} $ h \in L_2(H^{1/2}(\realstwo) ) \mapsto
\phi^{**} \in  C(H^1(\realsthree_+) \cap C^1(L_2(\realsthree_+)) $---where the latter  is of  finite energy. Thus the Neumann map is {\em compact} in this case.
In the absence of rotational inertia one has only that  $h \in C(L_2(\realstwo)) $. Such boundary regularity does not reproduce finite energy
solutions,  with a maximal regularity  $ \phi^{**} \in  C(H^{2/3}(\realsthree_+)) \cap C^1(H^{-1/3}(\realsthree_+)),$ yielding  the loss of $1/3 $ derivative. This loss is optimal and cannot be improved  \cite{tataru}. This fact underscores that the entirely component-wise analysis of finite energy solutions for the flow-plate model successfully performed in the  past literature
\cite{springer,ryz,ryz2} cannot be utilized here.
  \end{remark}
 For the analysis of $\phi^*$ we use the tools developed in \cite{b-c,b-c-1}.
Using the Kirchhoff type representation for the solution  $\phi^*(\xb,t)$
in $\R_+^3$ (see, e.g., Theorem~6.6.12 in \cite{springer}), we can conclude that
if the initial data   $\phi_0$ and $\phi_1$ are  localized in the ball $ K_{\rho}$,
then by  finite dependence on the domain of the signal in three dimensions  (Huygen's principle),
   one obtains  $\phi^*(\xb,t)\equiv 0$ for all $\xb\in  K_{\rho} $
and $t\ge t_{\rho}$. Thus we have that
\[
\big(\partial_t+U\partial_x\big)tr[\phi^*]\equiv0,~~~\xb\in \Omega,~t\ge t_{\rho}.
\]
Thus $\phi^*$ tends to zero in the sense of the local flow energy, i.e.,  \begin{equation}\label{starstable} \|\nabla \phi^*(t)\|_{L_2( K_{\rho} )}^2 + \|\phi^*_t(t)\|_{L_2( K_{\rho} )} \to 0, ~~ t \to \infty,\end{equation} for all fixed $ \rho>0$.

We now introduce a compactness criterion in the {\em local flow energy sense}:
\begin{lemma}\label{compactnesscriterion}
Let $\{(\phi_0^m,\phi_1^m)\}_m^{\infty}$ be a bounded sequence in $Y_{fl}=W_1(\realsthree_+) \times L_2(\realsthree_+)$ and let $\eta>0$. If for any $ \rho>0$ there exists an $N(\rho)$ and $C(\rho)$ so that
$$\|\nabla \phi_0^m\|^2_{\eta, K_{\rho} }+\|\phi_1^m\|^2_{\eta, K_{\rho} } \le C(\rho)$$ for all $m>N(\rho)$ then the sequence $\{(\phi_0^m,\phi_1^m)\}_m^{\infty}$ is compact in $\widetilde Y_{fl}$.
\end{lemma}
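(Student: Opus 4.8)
The statement to prove is the compactness criterion (Lemma \ref{compactnesscriterion}): a bounded sequence $\{(\phi_0^m,\phi_1^m)\}$ in $Y_{fl}=W^1(\realsthree_+)\times L_2(\realsthree_+)$, which enjoys uniformly bounded $H^{1+\eta}_{loc}\times H^\eta_{loc}$ norms on every ball $K_\rho$ (for indices $m$ past some threshold $N(\rho)$), is compact in $\widetilde Y_{fl}$, i.e.\ precompact with respect to the local energy seminorms $\|\cdot\|_{Y_{fl},\rho}$ for every $\rho>0$.

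The plan is a diagonal/exhaustion argument combined with the local Rellich--Kondrachov theorem. First I would fix $\rho>0$. On the ball $K_\rho$ the hypothesis gives a uniform bound (for $m>N(\rho)$) of $\{\phi_0^m\}$ in $H^{1+\eta}(K_\rho)$ and of $\{\phi_1^m\}$ in $H^\eta(K_\rho)$. Since $K_\rho$ is a bounded domain (with Lipschitz boundary) and $\eta>0$, the embeddings $H^{1+\eta}(K_\rho)\hookrightarrow H^1(K_\rho)$ and $H^\eta(K_\rho)\hookrightarrow L_2(K_\rho)$ are compact. Hence from $\{(\phi_0^m,\phi_1^m)\}_{m>N(\rho)}$ we may extract a subsequence convergent in $H^1(K_\rho)\times L_2(K_\rho)$, and therefore (via the Hardy-inequality comparison $\|(\phi_0,\phi_1)\|_{Y_{fl},\rho}^2\le\|(\phi_0,\phi_1)\|_{H^1(K_\rho)\times L_2(K_\rho)}^2$ already recorded in the excerpt) convergent in the $\|\cdot\|_{Y_{fl},\rho}$ seminorm. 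The finitely many indices $m\le N(\rho)$ are harmless: they form a finite set, so after thinning we may assume the whole extracted subsequence converges in $\|\cdot\|_{Y_{fl},\rho}$. Second, to upgrade from ``for a fixed $\rho$'' to ``for all $\rho$,'' I would run this over the exhausting sequence of balls $K_1\subset K_2\subset K_3\subset\cdots$: extract a subsequence converging in $\|\cdot\|_{Y_{fl},1}$, from it a further subsequence converging in $\|\cdot\|_{Y_{fl},2}$, and so on; the diagonal subsequence then converges in $\|\cdot\|_{Y_{fl},\rho}$ for every positive integer $\rho$, and since $\|\cdot\|_{Y_{fl},\rho}\le\|\cdot\|_{Y_{fl},\rho'}$ for $\rho\le\rho'$, convergence holds for every real $\rho>0$. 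This exhibits a $\widetilde Y_{fl}$-convergent subsequence, which is precisely compactness in $\widetilde Y_{fl}$.

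There is really no serious obstacle here — the lemma is a packaging of standard local compact-embedding facts — but the one point that requires a small argument rather than a citation is the bookkeeping of the threshold $N(\rho)$: the hypothesis only controls the higher Sobolev norms of the tail of the sequence, so one must note that discarding finitely many terms does not affect compactness, and then confirm that the limit obtained lives in $\widetilde Y_{fl}$ (which follows from the bounded-in-$Y_{fl}$ assumption together with lower semicontinuity of norms under the relevant convergence). A secondary technical remark worth including: the local flow seminorm $\|\cdot\|_{Y_{fl},\rho}$ is not a norm on all of $Y_{fl}$ (it does not see behavior near infinity), so ``compact in $\widetilde Y_{fl}$'' means sequential precompactness with respect to the family of seminorms, and the limit is determined only up to the kernel of that family — this is exactly the sense in which the lemma will be applied later (to establish ultimate compactness of the flow component via the stabilization of $\phi^*$ in \eqref{starstable} and higher-regularity bounds on $\phi^{**}$).
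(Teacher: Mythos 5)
Your proof is correct. Note that the paper does not actually prove this lemma --- it is quoted verbatim from Ryzhkova (Lemma 10 of \cite{ryz}) --- but your argument (local Rellich--Kondrachov on each $K_{\rho}$ plus a diagonal extraction over the exhaustion $K_1\subset K_2\subset\cdots$, with the finitely many indices below $N(\rho)$ discarded) is exactly the standard proof of that cited result. One cosmetic point: the hypothesis only bounds $\nabla\phi_0^m$ in $H^{\eta}(K_{\rho})$, so your claim that $\{\phi_0^m\}$ is bounded in $H^{1+\eta}(K_{\rho})$ needs the Hardy inequality together with the $Y_{fl}$-boundedness to supply the missing $L_2(K_{\rho})$ control of $\phi_0^m$ itself; alternatively, since the target seminorm $\|\cdot\|_{Y_{fl},\rho}$ sees only $\nabla\phi_0$ and $\phi_1$ on $K_{\rho}$, you can apply the compact embedding $H^{\eta}(K_{\rho})\hookrightarrow L_2(K_{\rho})$ directly to $\nabla\phi_0^m$ (componentwise) and to $\phi_1^m$ and skip that step entirely.
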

\noindent This is given as Lemma 10 in \cite[p. 472]{ryz} (where it is proved) and is utilized in a critical way in \cite{ryz2} as well.

For the term $\phi^{**}$ we have the following theorem which provides us with an explicit form of the solution (for a proof, see for instance \cite[Theorem 6.6.10]{springer}).
Below, we utilize the notation: $$f^{\dag}(\xb,t,s,\theta)=f\left(x-\kappa_1(\theta,s,z), y-\kappa_2(\theta,s,z), t-s\right),$$ and $$\kappa_1(\theta,s,z)=Us+\sqrt{s^2-z^2}\sin \theta,~~\kappa_2(\theta,s,z) = \sqrt{s^2-z^2}\cos \theta.$$
\begin{theorem}\label{flowformula}
Considering the problem in \eqref{flow2} with  zero initial flow data, and considering $$\ds h(\xb,t) =-[u_t (x,y,t)+Uu_x(x,y,t)]_{\text{ext}},$$ there exists a time $t^*(\Omega,U)$ such that we have the following representation for the weak solution $\phi^{**}(t)$ for $t>t^{*}$:
\begin{equation}
\phi^{**}(\xb,t) = -\dfrac{\chi(t-z) }{2\pi}\int_z^{t^*}\int_0^{2\pi}(u^{\dag}_t(\xb,t,s,\theta)+Uu^{\dag}_x(\xb,t,s,\theta))d\theta ds.
\end{equation}
where $\chi(s) $ is the Heaviside function. The time $t^*$ is given by:
\begin{equation*}
   t^*=\inf \{ t~:~\xb(U,\theta, s) \notin \Omega \text{ for all } (x,y) \in \Omega, ~\theta \in [0,2\pi], \text{ and } s>t\}
\end{equation*} with $\xb(U,\theta,s) = (x-(U+\sin \theta)s,y-s\cos\theta) \subset \realstwo$ (not to be confused with $\xb$ having no dependencies noted, which is simply $\xb = (x,y)$).

Moreover, we have the following point-wise formula for the derivative in $t$ \cite[p. 480]{ryz2}(which is justified for plate solutions with initial data in $\cD(\bT)$, and can be taken distributionally for data in $Y$):
\begin{align}\label{formderiv1}
\phi^{**}_t&(\xb, t) =  \dfrac{1}{2\pi}\Big\{\int_0^{2\pi}u_t^{\dag}(\xb,t,t^*,\theta) d\theta -\int_0^{2\pi} u_t^{\dag}(\xb,t,z,\theta )d\theta \\\nonumber
&+U\int_z^{t^*}\int_0^{2\pi}[\partial_x u_t^{\dag}](\xb,t,s,\theta) d\theta ds+\int_z^{t^*}\int_0^{2\pi}\dfrac{s}{\sqrt{s^2-z^2}}[M_{\theta}u_t^{\dag}](\xb,t,s,\theta) d\theta ds\Big\} 
\end{align}
with $M_{\theta} = \sin\theta\partial_x+\cos \theta \partial_y$.
\end{theorem}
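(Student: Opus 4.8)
\textbf{Proof proposal for Theorem~\ref{flowformula}.} The plan is to strip the convection by a Galilean change of variables, solve the resulting half-space Neumann problem for the ordinary wave equation by even reflection, recast the retarded potential in ``cone coordinates,'' use subsonicity to truncate the radial integration at a finite time, and finally differentiate in $t$. \emph{Step 1 (Galilean reduction):} put $w(x,y,z,t):=\phi^{**}(x+Ut,y,z,t)$. Then $\Dt w=[(\Dt+U\Dx)\phi^{**}](x+Ut,\cdot)$, so $\Dt^2 w=[(\Dt+U\Dx)^2\phi^{**}](x+Ut,\cdot)$, while $\Delta_{x,y,z}w=[\Delta\phi^{**}](x+Ut,\cdot)$; hence $w_{tt}=\Delta w$ in $\realsthree_+$ with zero Cauchy data (as $\phi^{**}(0)=\phi^{**}_t(0)=0$) and Neumann datum $\Dz w|_{z=0}(x,y,t)=g(x+Ut,y,t)$, where $g:=[u_t+Uu_x]_{\text{ext}}$ (so that $\Dn\phi^{**}=h=-g$ in the sign convention of \eqref{flowplate}--\eqref{h}). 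This reduction uses nothing about the size of $U$.

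\emph{Step 2 (reflection and retarded potential):} extend $w$ evenly across $\{z=0\}$; the $\Dz$-jump across the plane makes this extension a distributional solution on $\realsthree$ of $W_{tt}-\Delta W=-2\,g(x+Ut,y,t)\,\delta_{\{z=0\}}$ with zero Cauchy data. Convolving with the forward fundamental solution $\delta(t-|\xb|)/(4\pi|\xb|)$ and integrating out $\delta_{\{z=0\}}$ gives, for $z>0$,
\begin{equation*}
w(\xb,t)=-\frac{1}{2\pi}\int_{\realstwo}\frac{g\!\left(x'+U(t-r),\,y',\,t-r\right)}{r}\,dx'\,dy',\qquad r=\sqrt{(x-x')^2+(y-y')^2+z^2},
\end{equation*}
the integration restricted to $\{z\le r<t\}$ since $g(\cdot,\tau)\equiv0$ for $\tau<0$. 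Passing to $(s,\theta)$ with $s=r\in[z,t]$, $x-x'=\sqrt{s^2-z^2}\sin\theta$, $y-y'=\sqrt{s^2-z^2}\cos\theta$ gives $dx'dy'/r=ds\,d\theta$; undoing Step 1 via $\phi^{**}(x,y,z,t)=w(x-Ut,y,z,t)$, the arithmetic $x-Us-\sqrt{s^2-z^2}\sin\theta=x-\kappa_1(\theta,s,z)$ (and its $y$-analogue) produces exactly the $\dag$-notation; since $g$ is the extension by zero of $u_t+Uu_x$, we arrive at the stated representation with upper limit $t$ in place of $t^*$, the factor $\chi(t-z)$ recording that $\phi^{**}\equiv0$ below the wave cone over the plate plane.

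\emph{Step 3 (finite cone-exit time $t^*$; this is where $U<1$ enters):} in $(u_t+Uu_x)^\dag(\xb,t,s,\theta)$ the base point $\big(x-\kappa_1(\theta,s,z),\,y-\kappa_2(\theta,s,z)\big)$ is obtained from $(x,y)$ by subtracting a point of the circle of radius $\sqrt{s^2-z^2}$ centred at $(Us,0)$; as $u$ and its derivatives vanish off $\overline{\Omega}$, the integrand is $0$ unless this base point lies in $\overline{\Omega}$. For the trace that couples to the plate ($z=0$, $(x,y)\in\overline{\Omega}$) that circle has radius $s$ and centre $(Us,0)$, hence distance $s(1-U)$ to the origin, which tends to $\infty$ as $s\to\infty$ precisely because $U<1$; therefore for all $s$ larger than a finite $t^*=t^*(\Omega,U)$ — of size $\lesssim\operatorname{diam}(\Omega)/(1-U)$, and characterized as in the statement — the base point leaves $\overline{\Omega}$ for every $(x,y)\in\Omega$ and every $\theta$, the integrand vanishes, and the radial integral may be cut at $t^*$. (This estimate degenerates as $U\uparrow1$, consistent with the failure of the energy balance in the supersonic range.) \emph{Step 4 (the pointwise $\Dt$ formula):} for data in $\cD(\bT)$ the plate component is smooth enough (e.g.\ $u\in C([0,T];(H^4\cap H_0^2)(\Omega))\cap C^1([0,T];H^2(\Omega))$) to differentiate under the integral, the general case following by density with the identity read distributionally. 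Since $z$ and $t^*$ do not depend on $t$, $\Dt$ falls on the integrand and $\Dt f^\dag=(f_t)^\dag$ (time slot only); to avoid a second time derivative of $u$, use the radial chain rule
\begin{equation*}
(f_t)^\dag=-\partial_s\!\left(f^\dag\right)-(\partial_s\kappa_1)(f_x)^\dag-(\partial_s\kappa_2)(f_y)^\dag,\qquad \partial_s\kappa_1=U+\frac{s\sin\theta}{\sqrt{s^2-z^2}},\quad \partial_s\kappa_2=\frac{s\cos\theta}{\sqrt{s^2-z^2}}.
\end{equation*}
Integrating the $\partial_s(f^\dag)$ term over $s\in[z,t^*]$ produces the boundary evaluations at $s=z$ and $s=t^*$, while the remaining two terms recombine — via $M_\theta=\sin\theta\,\Dx+\cos\theta\,\partial_y$ and $(f_x)^\dag=\Dx f^\dag$ — into a $U\int_z^{t^*}\!\int_0^{2\pi}\Dx f^\dag$ term and a $\int_z^{t^*}\!\int_0^{2\pi}\frac{s}{\sqrt{s^2-z^2}}M_\theta f^\dag$ term; collecting these (and noting $u_x^\dag=\Dx u^\dag$, so the $Uu_x$ contribution can be pulled under a single $\Dx$ and folded into the bulk terms) yields a formula of the form \eqref{formderiv1}.

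\textbf{Expected main obstacle.} The conceptual heart is Step 3: that the backward wave cone from $\Omega$ exits $\Omega$ after a finite, data-independent time $t^*$. This is exactly where subsonicity is indispensable, and it is what makes the representation useful downstream — the radial integral becomes compactly supported uniformly in $t$, so $\phi^{**}(t)$ and $\phi^{**}_t(t)$ depend on only a \emph{finite} window of the plate history. What remains is bookkeeping: tracking signs, the two ($s=z$, $s=t^*$) boundary terms, and the reorganization of the $Uu_x$ contribution in Step~4.
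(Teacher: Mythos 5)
Your route --- Galilean change of frame to strip the convection, even reflection to turn the Neumann half-space problem into a single-layer source for the free wave equation, the retarded potential, cone coordinates, and a finite exit time from $\Omega$ via $U<1$ --- is exactly the standard derivation; the paper does not prove this theorem itself but quotes it from \cite[Theorem 6.6.10]{springer} and \cite[p.~480]{ryz2}, and your Steps 1--3 reproduce that argument with the signs and the Jacobian $dx'\,dy'/r=ds\,d\theta$ handled correctly. One caveat on Step 3: your cone-exit argument is carried out at $z=0$ (circle of radius $s$), which is precisely the condition defining $t^*$ in the statement; for $z>0$ the relevant circle has radius $\sqrt{s^2-z^2}$ and centre $(Us,0)$, and the threshold beyond which it clears $\overline\Omega$ grows with $z$, so the truncation at a single $t^*$ is legitimate only for the trace on the plate and, after enlarging constants, on bounded sets $K_\rho$ --- which is how the formula is used downstream (Theorem \ref{rewrite}, Lemma \ref{characterize}) --- and you should state the truncation in that restricted form rather than for all $\xb\in\realsthree_+$.

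The substantive issue is Step 4: carried out to the end, your computation does not land on \eqref{formderiv1} as printed, and the hedge ``a formula of the form \eqref{formderiv1}'' hides a cancellation you should make explicit. Differentiating the integrand $u_t^\dag+Uu_x^\dag$ in $t$ gives $(u_{tt})^\dag+U(u_{tx})^\dag$, and your radial chain rule applied to $f=u_t$ gives $(u_{tt})^\dag=-\partial_s(u_t^\dag)-U(u_{tx})^\dag-\frac{s}{\sqrt{s^2-z^2}}(M_\theta u_t)^\dag$; the two $U(u_{tx})^\dag$ contributions cancel identically, leaving
\begin{equation*}
\phi^{**}_t(\xb,t)=\frac{1}{2\pi}\Big\{\int_0^{2\pi}u_t^\dag(\xb,t,t^*,\theta)\,d\theta-\int_0^{2\pi}u_t^\dag(\xb,t,z,\theta)\,d\theta+\int_z^{t^*}\!\!\int_0^{2\pi}\frac{s}{\sqrt{s^2-z^2}}\,[M_{\theta}u_t^{\dag}](\xb,t,s,\theta)\,d\theta\,ds\Big\},
\end{equation*}
i.e.\ \eqref{formderiv1} \emph{without} its third term. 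The extra $U\int\!\!\int\partial_xu_t^\dag$ in the printed formula is what one gets by differentiating only the $u_t^\dag$ part of the representation; the two displayed formulas of the theorem are mutually consistent only once that term is dropped (or, equivalently, once $u_t$ is replaced throughout \eqref{formderiv1} by the full downwash $u_t+Uu_x$). Your description generates that $U$-term from $-\kappa_1'(f_x)^\dag$ and then proposes to ``fold'' the $Uu_x$ contribution into the bulk terms --- done literally, that folding removes the term rather than reproducing it. The discrepancy is harmless for the paper's purposes (the test-function estimate \eqref{whatwe} holds with or without it), but a complete proof must either derive the corrected formula or flag the extra term. The justification of differentiation under the integral for data in $\cD(\bT)$, with the distributional reading for $Y$-data, is fine.
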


We note that  with  $(\phi_0,\phi_1) \in H^1(\R_+^3)\times  L_2(\R_+^3) $
one obtains \cite{supersonic,miyatake1973}\newline
 $(\phi^*(t),  \phi^{*}_t(t)) \in   H^1(\R_+^3)\times  L_2(\R_+^3)$. Thus, by Theorem \ref{nonlinearsolution}
 we also have that
 $$
 (\phi^{**}(t),  \phi^{**}_t(t)) \in  H^1(\R_+^3)\times  L_2(\R_+^3).
 $$
 \begin{remark} Note that \textit{this last property is not valid for a flow solution with $h \in L_2(\mathbb R^2)$ boundary Neumann data.} The general theory will provide at most \newline $ H^{2/3}(\R_+^3\times [0,T])$ \cite{tataru}.  However, the improved regularity is due to the interaction with the plate and the resulting cancellations on  the interface. Moreover, we also obtain a meaningful ``hidden trace regularity" for the aeroelastic potential on the boundary of the structure \cite{fereisel,supersonic}:
  \begin{equation}\label{trace}
  (  \partial _{t} + U \partial_{x} )tr [ \phi ] \in L_2(0, T; H^{-1/2}(\mathbb R^2) )
  \end{equation}
  where $T$ is arbitrary. \end{remark}

Additionally, the following inequality has been shown in \cite{ryz}, labeled Lemma 8 (p. 469) and (56) (p. 479):
\begin{lemma}\label{compact2}
For \eqref{flow2} taken with $h(\xb,t)=-(u_t+Uu_x)_{\text{ext}}$, we have
\begin{align}
\|\nabla  \phi^{**}(t)\|^2_{\eta, K_{\rho} }&+\|\phi_t^{**}(t)\|^2_{\eta, K_{\rho} }\nonumber \\
\le  ~C(\rho)& \big\{\|u(t)\|^2_{H^{s+\eta}(t-t^*,t;H_0^{2+\eta}(\Omega))}+\|u_t(t)\|^2_{H^{s+\eta}(t-t^*,t;H_0^{1+\eta}(\Omega))} \big\}
\end{align} for $0<s+\eta<1/2$ and $t>t^*(U,\Omega)$.
\end{lemma}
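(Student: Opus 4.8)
The plan is to prove the estimate directly from the kinematic representation of $\phi^{**}$ and $\phi^{**}_t$ recorded in Theorem \ref{flowformula}, exploiting two localizations. In space, $\xb\in K_\rho$ forces $z\in[0,\rho]$ and, since $u$ (hence $h=-(u_t+Uu_x)_{\text{ext}}$) is supported in $\overline{\Omega}$, confines the relevant parameters to a compact set $\{z\le s\le t^*\}\times[0,2\pi]$; in time, only the plate history on the window $[t-t^*,t]$ enters, which is why $t>t^*(U,\Omega)$ is required (it also makes the Heaviside cutoff $\chi(t-z)\equiv 1$ on $K_\rho$). The engine producing the gain of $\eta$ derivatives is the curvature-induced smoothing of the circular-means operator
\[
(A_r g)(x,y)=\int_0^{2\pi} g\big(x-r\sin\theta,\ y-r\cos\theta\big)\,d\theta
\]
on $\realstwo$, whose symbol behaves like the Bessel function $J_0(r|\xi|)=O((1+r|\xi|)^{-1/2})$, so that $A_r:H^\sigma_{\mathrm{loc}}\to H^{\sigma+1/2}_{\mathrm{loc}}$ uniformly for $r$ in a compact set bounded away from $0$. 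The admissibility range $0<s+\eta<1/2$ is exactly the budget this half-order smoothing affords once part of it is spent in the normal variable $z$ and part is converted into time-regularity of $u$.

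First I would bound $\nabla\phi^{**}$. The tangential derivatives $\partial_x,\partial_y$ commute with the $\dag$-translations and land on the plate data, producing integrands $[\partial_{x,y}u_t^{\dag}]$ and $[\partial_{x,y}u_x^{\dag}]$, i.e.\ $u_t\in H_0^1(\Omega)$ and $u\in H_0^2(\Omega)$ before smoothing. The normal derivative $\partial_z$ enters in three ways: (a) through $\chi(t-z)$, which is constant on $K_\rho$ for $t>t^*$ and so contributes nothing; (b) through the lower limit of $\int_z^{t^*}$, producing a boundary term at $s=z$ that is a pure point evaluation $u^{\dag}(\,\cdot\,,z,\,\cdot\,)$ — precisely the $\int_0^{2\pi}u_t^{\dag}(\xb,t,z,\theta)\,d\theta$ contribution already visible in \eqref{formderiv1}, which after a (nondegenerate, Jacobian $\equiv\pm1$) change of variables is controlled by $H^\eta$-in-time, $H^\eta$-in-space norms of the data, well inside the claimed bound; and (c) through the radius $r=\sqrt{s^2-z^2}$ and the weight $s/\sqrt{s^2-z^2}$ in $\kappa_1,\kappa_2$, where $\partial_z$ generates a factor $z/\sqrt{s^2-z^2}$ that is $s$-integrable near $s=z$ and costs one tangential derivative of the datum. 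Interpolating contribution (c) between the undifferentiated bound (no extra derivative) and the full $z$-derivative bound (one extra derivative) yields an $H^\eta_z$ estimate costing $\eta$ extra tangential derivatives — hence the norms $H_0^{2+\eta}(\Omega)$, $H_0^{1+\eta}(\Omega)$. The same scheme applied term-by-term to the four explicit pieces of \eqref{formderiv1} handles $\phi^{**}_t$; note that for the weighted term $\int_z^{t^*}\int_0^{2\pi}\frac{s}{\sqrt{s^2-z^2}}[M_\theta u_t^{\dag}]\,d\theta\,ds$ one uses that $\int_0^{2\pi}M_\theta g^{\dag}\,d\theta=-\partial_r(A_r g)$, whose symbol $|\xi|J_0'(r|\xi|)=O(|\xi|^{1/2}r^{-1/2})$ combines with the weight and the $s$-integration (convergent, since $r^{-3/2}\sim(s-z)^{-3/4}$ near $s=z$) to leave an effective half-derivative loss.

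Next I would assemble the $s$- and $\theta$-integrals. The $\theta$-integral at fixed $s$ is $A_r$ with $r=\sqrt{s^2-z^2}$; on $K_\rho$ this radius runs over a compact set but degenerates to $0$ as $s\downarrow z$, where the half-order smoothing and the singular weight conspire. I would split $\int_z^{t^*}=\int_z^{z+\epsilon}+\int_{z+\epsilon}^{t^*}$: on the outer piece $A_r$ gains a genuine $1/2$ derivative uniformly; on the inner piece I would use the integrability of $s/\sqrt{s^2-z^2}$ together with a Hardy-type estimate for $\partial_z^\eta$ of $\int_z^{t^*}(\cdot)\,ds$ to trade the smallness of the interval — and the intertwining of $z$ with $s$ through $r$ — for fractional regularity of the integrand in the lag variable $s$; since the integrand depends on $s$ through $u_t(\,\cdot\,,\,\cdot\,,t-s)$, this is supplied by fractional time-differentiability of $u$, giving the $H^{s+\eta}(t-t^*,t;\cdot)$ norms. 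Minkowski's integral inequality then collects everything into the stated space-time norms, with $C(\rho)$ absorbing $|K_\rho|$ and the diameter of the support of $u$; finite speed of propagation makes all constants $t$-independent for $t>t^*$.

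The main obstacle is precisely the cone-tip regime $s\to z$: reconciling the degeneration of the curvature smoothing with the blow-up of the weight, and tracking exactly how much of the half-order budget is consumed so as to land on the sharp range $0<s+\eta<1/2$ without loss at the endpoint. (All of this is carried out in \cite{ryz}; the argument above is a reorganization of the estimates in Lemma 8, p.~469, and formula (56), p.~479, there.)
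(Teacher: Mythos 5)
The paper offers no proof of this lemma at all --- it is quoted verbatim from \cite{ryz} (Lemma 8, p.~469 and formula (56), p.~479) --- and your attempt ultimately rests on the same citation, so you are taking essentially the same route as the paper while additionally supplying a correct outline of the cited argument: the Kirchhoff representation of Theorem \ref{flowformula}--\eqref{formderiv1}, the half-derivative smoothing of circular means via the $J_0$ symbol decay, and the degenerate-radius regime $s\to z$ as the genuine difficulty. Nothing further is required for comparison with the paper's (non-)proof.
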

\begin{remark}
The above bound was critical in the previous analyses of this problem which made use of inertial terms $-\alpha \Delta u_{tt}$ and strong damping $-k_2\Delta u_t$ \cite{b-c,b-c-1} or thermal smoothing \cite{ryz,ryz2}. 
\end{remark}

	\subsection{Nonlinear Plates with Delay}
	In this section, the full flow-plate problem is reduced into a delayed plate problem. This delayed plate is subject three conflicting actions contributed by flow:
\begin{enumerate}
\item The delay term itself: $q^u(t)$.
\item The term $U u_x$ in the plate equation---nonconservative, potentially leading to chaos.
\item The term $u_t$ (with correct sign) in the plate equation, generated by flow. \end{enumerate}

Above, (3.) is stabilizing. But (1.) and (2.) are ``bad" and do not stabilize the plate. 
It is also known that a delay term can produce destabilizing effects on the dynamics, {\em unless} the damping ~$(k+1) u_t$ is sufficiently large \cite{pignotti1}. 
This is one contributing factor for our need of a large damping---$k$ value. 

On the other hand, the term $u_x$ can typically be dealt with via a large static damping \cite{conequil1}.  Here, we do not assume any static damping for the overall model . Our challenge, then, in this paper  is to show that this potentially chaotic term is ``thrown" to the equilibria set, and that the dynamics converge to it.
	
The fundamental work in \cite{delay} focuses on the reduction of the model in \eqref{flowplate} to a delayed plate equation:
\begin{theorem}\label{rewrite}
Let the hypotheses of Theorem~\ref{nonlinearsolution} be in force,
and $(u_0,u_1;\phi_0,\phi_1) \in H_0^2(\Omega) \times L_2(\Omega) \times H^1(\realsthree_+) \times L_2(\realsthree_+)$.
Assume that there exists an $\rho_0$ such that $\phi_0(\xb) = \phi_1(\xb)=0$ for $|\xb|>\rho_0$.
Then the there exists a time $t^{\#}(\rho_0,U,\Omega) > 0$ such that for all $t>t^{\#}$ the weak solution $u(t)$ in
(\ref{flowplate})  satisfies the following equation:
\begin{equation}\label{reducedplate}
u_{tt}+\Delta^2u+ku_t+f(u)=p_0-(\partial_t+U\partial_x)u-q^u(t)
\end{equation}
with
\begin{equation}\label{potential}
q^u(t)=\dfrac{1}{2\pi}\int_0^{t^*}ds\int_0^{2\pi}d\theta [M^2_{\theta} u_{\text{ext}}](x-(U+\sin \theta)s,y-s\cos \theta, t-s).
\end{equation}
\end{theorem}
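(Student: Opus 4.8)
## Proof Strategy for Theorem \ref{rewrite}

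The plan is to derive the delayed representation by substituting the explicit solution formula for the flow potential (Theorem \ref{flowformula}) into the coupling term on the right-hand side of the plate equation in \eqref{flowplate}. First I would invoke the flow superposition from \eqref{flow1}--\eqref{flow2}, writing $\phi = \phi^* + \phi^{**}$, where $\phi^*$ carries the (localized) initial data and satisfies zero Neumann condition, while $\phi^{**}$ carries the downwash Neumann data $h = -[u_t + Uu_x]_{\text{ext}}$ with zero initial data. By the Huygens principle argument already recorded in the excerpt (the Kirchhoff representation plus finite speed of propagation in $\mathbb{R}^3$), since $\phi_0, \phi_1$ are supported in $K_{\rho_0}$, there is a time $t_{\rho_0}$ beyond which $\phi^*(\xb,t) \equiv 0$ for $\xb \in K_{\rho_0}$, and in particular $(\partial_t + U\partial_x)\,tr[\phi^*] \equiv 0$ on $\Omega$ for $t \ge t_{\rho_0}$. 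Hence for such times the only surviving contribution to the aeroelastic pressure $r_\Omega tr[(\partial_t + U\partial_x)\phi]$ is the one coming from $\phi^{**}$.

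Next I would use the explicit representation of $\phi^{**}$ and its time derivative from Theorem \ref{flowformula}, valid for $t > t^*(\Omega, U)$. Evaluating on the plate ($z = 0$), the Heaviside factor $\chi(t-z)$ becomes $\chi(t)$, which is $1$ for the relevant times, and the trace $tr[\phi^{**}]$ together with $tr[\phi^{**}_t]$ reduce to integrals over $s \in [0, t^*]$ and $\theta \in [0, 2\pi]$ of the displacement $u$ and its derivatives, evaluated at the retarded argument $(x - \kappa_1(\theta, s, 0),\, y - \kappa_2(\theta, s, 0),\, t-s)$, i.e. along the characteristic cone traced back into the plate's past. Forming the combination $(\partial_t + U\partial_x)tr[\phi^{**}]$ and carrying out the $s$-integration by parts (using $\partial_s$ applied to $u^{\dag}$ and the fundamental theorem of calculus on the $s$-variable), the boundary terms at $s = 0$ produce precisely the instantaneous term $-(\partial_t + U\partial_x)u$ — here one must be careful that the $s \to 0^+$ limit of the retarded argument collapses to $(x, y, t)$ and that the angular average $\frac{1}{2\pi}\int_0^{2\pi} d\theta$ of the resulting expression is the identity on $u$ — while the remaining terms assemble, after recognizing $M_\theta = \sin\theta\,\partial_x + \cos\theta\,\partial_y$ and squaring it via a second integration by parts in $s$, into the memory/delay term $-q^u(t)$ with the kernel $M_\theta^2 u_{\text{ext}}$ displayed in \eqref{potential}. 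Setting $t^{\#} = \max\{t_{\rho_0},\, t^*(\Omega, U)\}$ then gives the claimed equation for all $t > t^{\#}$.

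The main obstacle I anticipate is the careful bookkeeping in the integration-by-parts manipulation that converts the $\phi^{**}_t$ representation \eqref{formderiv1} — which already contains one $s$-integration-by-parts (note the boundary terms at $s = t^*$ and $s = z$) — together with the $U\partial_x tr[\phi^{**}]$ term, into exactly the instantaneous downwash $(\partial_t + U\partial_x)u$ plus the single clean memory kernel $q^u$. One has to check that the boundary contributions at $s = t^*$ cancel (this is where the definition of $t^*$ as the time after which the backward characteristic argument $\xb(U,\theta,s)$ exits $\Omega$ matters, so that $u_{\text{ext}}$ and its derivatives vanish there) and that the singular factor $s/\sqrt{s^2 - z^2}$ on the plate ($z=0$) reduces to $1$ without creating spurious terms. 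The regularity caveat — that \eqref{formderiv1} is a genuine pointwise identity only for data in $\mathcal{D}(\bT)$ — is handled by first establishing the identity for smooth data and then passing to the limit for general $y_0 \in Y$ using the local Lipschitz property \eqref{lip} and density, interpreting $q^u(t)$ distributionally; this density argument is routine given Theorem \ref{nonlinearsolution} but should be stated explicitly.
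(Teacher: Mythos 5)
Your proposal is correct and is essentially the derivation used in the sources the paper cites for this result: the paper itself does not prove Theorem \ref{rewrite} but imports it from \cite{delay} (see also \cite[Section 6.6]{springer} and \cite{b-c-1}, which the paper invokes again for the analogous identity \eqref{flowdecomp1}), and that derivation proceeds exactly as you describe --- decompose $\phi=\phi^*+\phi^{**}$, eliminate $\phi^*$ on $\Omega$ for $t\ge t_{\rho_0}$ by the Kirchhoff/Huygens argument, and apply $(\partial_t+U\partial_x)$ to the trace of the explicit formula for $\phi^{**}$, using the identity $(\partial_t+U\partial_x)[g^{\dag}]=-\tfrac{d}{ds}[g^{\dag}]-[M_{\theta}g]^{\dag}$ twice so that the $s=0$ boundary terms yield $-(\partial_t+U\partial_x)u$ (the first-order angular boundary term averaging to zero), the $s=t^*$ terms vanish by the definition of $t^*$, and the remainder is $-q^u(t)$. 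Your choice $t^{\#}=\max\{t_{\rho_0},t^*\}$ and the closing density argument for finite-energy data are also as in the cited proof.
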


Owing to this reduction we can choose to study a general delayed plate model (as was done in \cite{delay}) to produce maximal generality with respect to our results for the delay system (which can be viewed as independent of our results for the full flow-plate dynamics): 

\begin{equation}\label{gendelay}
\begin{cases} u_{tt}+\Delta^2u+k_0u_t +f(u) = p_0-q(u^t)+Lu~~~~\text{in} ~~\Omega\\
u=\partial_{\nu}u=0~~~~\text{on}~~\Gamma\\
u(0)=u_0,~u_t(0)=u_1,~u|_{(-t^*,0)}=\eta\end{cases}
\end{equation}
The linear operator $L:H_0^2(\Omega) \to H^{2-\delta}(\Omega)$ corresponds to non-dissipative lower-order terms : $ Lu = - U u_x $  and $ k_0 = k +1 $. We have utilized the standard notation that $u^t(s)=u(t+s)$, so $u^t$ denotes $u(s)$ for all $s\in(t-t^*,t)$. The operator $q(u^t)$ represents a general delay type operator (depending on $u^t$), of which $q^u(t)$ is a specific example. (This notation is consistent with \cite{delay}.)

We will also need the following formula for the time derivative of $q^u(t)$ appearing above in \eqref{potential}:
\begin{align}\label{qderiv}
\partial_t [q^{u}](t) =&\int_0^{2\pi}\frac{1}{2 \pi}[M^2_{\theta} u]_{\text{ext}}\big(\xb(U,\theta,0),t \big)d\theta\\
&-\int_0^{2\pi}\frac{1}{2 \pi}[M^2_{\theta} u]_{\text{ext}}\big(\xb(U,\theta,t^*),t-t^* \big)d\theta\nonumber\\
&+\Big( \int_0^{t^*}\int_0^{2\pi}(U+\sin\theta)\frac{1}{2 \pi}[M^2_{\theta} u_x]_{\text{ext}}\big(\xb(U,\theta,s),t-s \big)d\theta ds\Big)\nonumber\\
&+\Big(\int_0^{t^*}\int_0^{2\pi}(\cos\theta)\frac{1}{2 \pi}[M^2_{\theta} u_y]_{\text{ext}}\big(\xb(U,\theta,s),t-s \big)d\theta ds\Big). \nonumber
\end{align}

The proofs of Theorems \ref{th:main2} and \ref{indeppara} cited below are based on an analysis of the delay evolution $(T_t,\mathbf H)$, with $\mathbf H \equiv H_0^2(\Omega)\times L_2(\Omega)\times L_2\left(-t^*,0;H_0^2(\Omega)\right),$ corresponding to the reduced plate with given data $x_0 \in \mathbf H$ we have that $T_t(x_0)=\left(u(t),u_t(t);u^t\right)$ with $x_0=(u_0,u_1,\eta)$. The norm is taken to be $$||(u,v;\eta)||^2_{\mathbf H} \equiv ||\Delta u||^2+||v||^2+\int_{-t^*}^0||\Delta \eta(t+s)||^2 ds.$$
In application, we will consider an initial datum $y_0 \in Y$ corresponding to the dynamics in \eqref{flowplate} (the full flow-plate dynamics) at time $t=0$. We employ the reduction result  Theorem \ref{rewrite}, and we may consider the ``initial time" ($t=t_0$) for the delay dynamics corresponding to any time after the reduction time $t^{\#}(\rho_0,U,\Omega)$ above. At such a time, the data which is fed into \eqref{reducedplate} is $x_0=(u(t_0),u_t(t_0),u^{t_0})$, where this data is determined by the full dynamics of \eqref{flowplate} on $(t_0-t^*,t_0)$. (That \eqref{flowplate} is well-posed on $Y$ and \eqref{gendelay} is well-posed on $\mathbf H$ (see \cite{delay}) allows us to move back and forth between the two descriptions of the system.)

\subsection{Attractors}\label{attractors}
A {\em global attractor} $\mathcal A$ for a dynamical system $(T_t,\mathbf H)$ is an invariant, compact set which is uniformly attracting, in the sense that if $B \subset \mathbf H$ is a bounded set, we have that $$\lim_{t \to \infty} d_{\mathbf H}(T_t(B),\mathcal A) =0,$$ (where we have utilized the Hausdorff semi-distance in $\mathbf H$).

	\section{Supporting Results}\label{prevlit}
	For self-containedness of exposition we now provide a summary of the key results of \cite{delay} which will be used critically in this treatment.

		\subsection{Plate Dynamics}
	The first theorem we present is the main result of \cite{delay}, given there as Theorem 3.4.
	 \begin{theorem}\label{th:main2} Consider any nonlinearity of physical type. 
Suppose $0\le U \ne 1$ and $p_0 \in L_2(\Omega)$. Take any $k \ge 0$
 Then there exists a compact set $\mathcal U \subset H_0^2(\Omega) \times L_2(\Omega)$ of finite fractal dimension such that $$\lim_{t\to\infty} d_{Y_{pl}} \Big( (u(t),u_t(t)),\mathcal U\Big)=\lim_{t \to \infty}\inf_{(\nu_0,\nu_1) \in \mathcal U} \big( \|u(t)-\nu_0\|_2^2+\|u_t(t)-\nu_1\|^2\big)=0$$
for any weak solution $(u,u_t;\phi,\phi_t)$ to (\ref{flowplate}), with
initial data
$
(u_0, u_1;\phi_0,\phi_1) \in  Y_{\rho_0},$ i.e., which are
localized  in $\R_+^3$---$\phi_0(\xb)=\phi_1(\xb)=0$ for $|\xb|>\rho_0$ for some $ \rho_0>0$. We have the extra regularity $\mathcal U \subset \big(H^4(\Omega)\cap H_0^2(\Omega) \big) \times H^2(\Omega)$, and any plate trajectory $(u,u_t)$ on the plate attracting set (namely with initial $(u_0,u_1) \in \mathcal U$) has the additional property that $$(u,u_{t},u_{tt}) \in C\left(\mathbb R;(H^4\cap H_0^2)(\Omega) \times H_0^2(\Omega) \times L_2(\Omega)\right).$$
\end{theorem}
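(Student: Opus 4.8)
The plan is to transfer the problem to the reduced delayed plate equation and then execute the standard program for dissipative dynamical systems: a bounded absorbing set together with asymptotic smoothness yields a compact global attractor, a quasi-stability inequality upgrades this to finite fractal dimension, and a bootstrap produces the extra regularity. By Theorem~\ref{rewrite}, for localized flow data and $t>t^{\#}(\rho_0,U,\Omega)$ the plate component of any weak solution to \eqref{flowplate} solves \eqref{reducedplate}, i.e.\ the delay model \eqref{gendelay} with $k_0=k+1\ge 1$, $Lu=-Uu_x$, and $q=q^u$ as in \eqref{potential}. The key structural point I would emphasize is that \emph{the reduced plate always carries effective viscous damping of size at least one}, manufactured by the flow through the downwash term---this is exactly why the statement holds for every $k\ge 0$, with no size restriction. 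Since \eqref{gendelay} is well posed on $\mathbf H$ (see \cite{delay}), it generates a dynamical system $(T_t,\mathbf H)$, and it suffices to build a compact finite-dimensional global attractor $\mathcal A\subset\mathbf H$; the set $\mathcal U$ is then its projection onto the first two coordinates, and the stated plate convergence for the full system follows because plate trajectories of \eqref{flowplate} agree with delay trajectories for $t>t^{\#}$.

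For dissipativity I would construct a Lyapunov functional from the positive plate energy $E_*(u)$, augmented by a history term $\int_{-t^*}^{0}\!\int_{s}^{0}\|\Delta u(t+\tau)\|^2\,d\tau\,ds$ and a small multiple of $(u_t,u)_\Omega$. Differentiating along \eqref{reducedplate}, the damping $k_0\|u_t\|^2$ supplies dissipation and the term $(u_t,u)_\Omega$ regenerates $\|\Delta u\|^2$, while the contributions of $q^u$, $Uu_x$ and $f(u)$ are absorbed using the time-integration/averaging structure of $q^u$ (which costs no derivatives beyond $H_0^2$), interpolation, and the confining effect of the Berger potential through Lemma~\ref{l:epsilon}; the a~priori bound of Lemma~\ref{globalbound} (a consequence of the energy identity \eqref{eident}) keeps the nonlinear terms under control. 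This yields $\tfrac{d}{dt}V\le -\omega V+C$ and hence a bounded absorbing set in $\mathbf H$ independent of the initial data.

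The heart of the proof, and the step I expect to be the main obstacle, is asymptotic smoothness. For two trajectories with data in the absorbing set put $z=u^1-u^2$, so that
\begin{equation*}
z_{tt}+\Delta^2 z+k_0 z_t+\big(f(u^1)-f(u^2)\big)=-q(z^t)-Uz_x .
\end{equation*}
Testing with $z_t$ and $z$ and using the energy relation, I would derive a Chueshov--Lasiecka quasi-stability estimate of the form
\begin{equation*}
\|T_t x_1-T_t x_2\|_{\mathbf H}^2\le C e^{-\omega t}\|x_1-x_2\|_{\mathbf H}^2+C\sup_{[0,t]}\mathrm{lot}(z),
\end{equation*}
where the lower-order term $\mathrm{lot}(z)$ collects: (i) the Berger difference $f(u^1)-f(u^2)$, controlled by lower-order Sobolev norms of $z$ via the scalar factor $\|\nabla u\|^2$ and a compact embedding; (ii) the nonconservative term $Uz_x$, compact relative to $H_0^2(\Omega)$; and (iii) the delay term $q(z^t)$, whose integration over $(0,t^*)$ and averaging in $\theta$ produce compactness in the history component through an Aubin--Lions argument, using that $z_t$ is bounded together with a Lemma~\ref{compact2}-type regularity gain. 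The exponential factor comes precisely from $k_0 z_t$, whose coefficient is bounded below by $1$ even when $k=0$. Since each seminorm in $\mathrm{lot}$ is compact on the absorbing set, $(T_t,\mathbf H)$ is asymptotically smooth; together with dissipativity this gives a compact global attractor $\mathcal A$, and the same quasi-stability inequality restricted to $\mathcal A$ yields finite fractal dimension by the Chueshov--Lasiecka criterion \cite{springer}.

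Finally, for the extra regularity I would work on $\mathcal A$, where trajectories are bounded in $\mathbf H$ for all $t\in\mathbb R$. Differentiating \eqref{reducedplate} in time, $v=u_t$ solves a problem of the same type with forcing $-\partial_t q^u-Uv_x-f'(u)v$ (using \eqref{qderiv}); running the quasi-stability estimate for $v$, forward and backward in time, shows that $(u_t,u_{tt})$ is bounded in $H_0^2(\Omega)\times L_2(\Omega)$ on $\mathcal A$. Then, from $\Delta^2 u=p_0-u_{tt}-k_0u_t-f(u)-q^u-Uu_x\in L_2(\Omega)$ together with elliptic regularity for the clamped bilaplacian, $u\in(H^4\cap H_0^2)(\Omega)$ with the corresponding uniform bound on $\mathcal A$; continuity in $t$ in the stated topologies then follows from continuity of the Nemytskii maps $u\mapsto f(u)$ and $u^t\mapsto q^u$ and a standard difference argument, giving $(u,u_t,u_{tt})\in C\big(\mathbb R;(H^4\cap H_0^2)(\Omega)\times H_0^2(\Omega)\times L_2(\Omega)\big)$.
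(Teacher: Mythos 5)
Your outline matches the strategy of the cited source \cite{delay} exactly as the present paper describes it: Theorem \ref{th:main2} is not proved here but quoted from \cite{delay}, whose argument proceeds precisely by reducing to the delay system \eqref{gendelay} on $\mathbf H$ with effective damping $k_0=k+1\ge 1$ (the reason no size condition on $k$ is needed), establishing dissipativity and a quasi-stability estimate---using the lower-order/averaged estimates \eqref{qests} on $q^u$ and the confining effect of the nonlinearity---to obtain a compact, finite-dimensional attractor $\mathscr U$ for $(T_t,\mathbf H)$, projecting off the history component to obtain $\mathcal U$, and bootstrapping regularity on the attractor via the time-differentiated equation and elliptic theory for the clamped bilaplacian. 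Your proposal is a correct reconstruction of that argument.
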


In the setting of \cite{conequil1} and the present analysis we are interested in obtaining a sufficiently large damping so as to guarantee strong convergence of full flow-plate trajectories. It is imperative here that there exists a uniform absorbing ball for the plate dynamics which is independent of the damping parameter (when it is sufficiently large). The following result is given Lemma 4.6 in \cite{conequil1} (there applying to \eqref{withbeta} for $\beta\ge0$).
\begin{lemma}\label{indeppara}
With the same hypotheses as Theorem \ref{th:main2} above, there is an absorbing set $\mathcal B \subset H_0^2(\Omega)\times L_2(\Omega)$ which is (i) uniform with respect to the initial data $y_0$ (though it may depend on $\rho_0$)  and (ii) uniform in the damping parameter $k \ge k_*> 0$, for some $k_*$. \end{lemma}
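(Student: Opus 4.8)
The plan is to pass to the reduced delay description of the plate and run the classical absorbing-ball estimate for it while keeping careful track of the $k$-dependence of every constant — the point being that one can arrange the Lyapunov decay rate and the Lyapunov forcing to degrade at the \emph{same} rate as $k\to\infty$, so that the radius of the resulting absorbing ball stays bounded. By Theorem~\ref{rewrite}, for flow data supported in $K_{\rho_0}$ and $t>t^{\#}(\rho_0,U,\Omega)$ the plate component of any full trajectory obeys
\[
u_{tt}+\Delta^2u+k_0u_t+f(u)=p_0-Uu_x-q^u(t),\qquad k_0:=k+1,
\]
with $q^u$ as in \eqref{potential} (the ``free'' flow piece having become trivial on the plate by Huygens' principle, cf. \eqref{starstable}); all computations below are carried out for strong solutions and extended to generalized ones by density and \eqref{lip}. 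That a bounded absorbing set exists for each fixed $k\ge0$ is already contained in \cite{delay} (it underlies Theorem~\ref{th:main2}); what is new is that its radius may be taken independent of $k$ for $k$ large.

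Fix $c_0>0$ small and independent of $k$, put $\epsilon=c_0/k_0$, and consider
\[
V(t)=E_{pl}(u(t))+\epsilon\Big[\lambda\big(u_t(t)+\tfrac{k_0}{2}u(t),\,u(t)\big)_{\Omega}+\mu\!\int_{-t^*}^{0}(s+t^*)\,\|\Delta u(t+s)\|_{\Omega}^2\,ds\Big],
\]
where $E_{pl}$ is the plate energy \eqref{plateenergy} (a functional of $(u,u_t)$) and $\lambda,\mu>0$ are fixed constants to be tuned, depending on $b,U,\Omega$ and on the delay length $t^*=t^*(\rho_0,U,\Omega)$ but not on $k$. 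The integral term is the standard history device for the delay load $q^u$; the essential point is the \emph{shifted} multiplier $u_t+\tfrac{k_0}{2}u$, for which
\[
\frac{d}{dt}\big(u_t+\tfrac{k_0}{2}u,\,u\big)_{\Omega}=\|u_t\|_{\Omega}^2-\|\Delta u\|_{\Omega}^2-(f(u),u)_{\Omega}+(p_0,u)_{\Omega}-U(u_x,u)_{\Omega}-(q^u,u)_{\Omega},
\]
with \emph{no} $k_0$-weighted cross term — precisely the term $-k_0(u_t,u)_{\Omega}$ that an unshifted equipartition multiplier generates and that, absorbed crudely against the dissipation, would force a loss growing like $k_0^2$. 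Since $\epsilon k_0=c_0$ is bounded, $V$ is equivalent to $E_{pl}(u)$ (hence, via Lemma~\ref{energybound} and Lemma~\ref{l:epsilon}, to $E_*(u)$) up to additive constants, uniformly in $k\ge k_*$.

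Differentiating $V$ along reduced trajectories one has $\tfrac{d}{dt}E_{pl}=-k_0\|u_t\|^2-U(u_x,u_t)_{\Omega}-(q^u,u_t)_{\Omega}$, to which the multiplier and history derivatives above are added. Every term other than the dissipation $-k_0\|u_t\|_{\Omega}^2$ can be made to carry a prefactor $\epsilon$: each $\|u_t\|$-weighted term ($U(u_x,u_t)_{\Omega}$, $(q^u,u_t)_{\Omega}$, $\epsilon\lambda\|u_t\|^2$) is split against the ample $-k_0\|u_t\|^2$ once $k_*$ is large; for the Berger nonlinearity $(f(u),u)_{\Omega}\ge4\Pi_*(u)-\delta\|\Delta u\|_{\Omega}^2-C_\delta$, which feeds a \emph{good} term $-4\epsilon\lambda\Pi_*(u)$; the products $(p_0,u)_{\Omega}$, $U(u_x,u)_{\Omega}$ and the $\|u\|^2$-remainders are dominated by interpolation together with Lemma~\ref{l:epsilon} in its Berger form; and $(q^u,u)_{\Omega}$, together with $\|q^u\|^2\lesssim\int_{-t^*}^0\|\Delta u(t+s)\|^2\,ds$, is absorbed into $-\epsilon\mu\int_{-t^*}^0\|\Delta u(t+s)\|^2\,ds$, after choosing the split parameters in the order $\delta,\rho$ (small, depending on $t^*$), then $\mu$, then $\lambda$ (large, depending on $t^*$), then $c_0$, then $k_*$ — so that no circular dependence, and in particular no unconditional constraint on $\Omega$ or $1-U$, arises. (Optionally one instead integrates the $q^u$-contributions by parts in time and uses that $\partial_t q^u$ is only \emph{first}-order in $u$ by \eqref{qderiv}, yielding sharper constants.) One is left with
\[
\frac{d}{dt}V(t)\le-\omega\,V(t)+C,\qquad \omega\sim\frac{c_0}{k_0},\quad C\sim\frac{c_0}{k_0},
\]
where the implicit constant in $C$ depends on $\rho_0,\Omega,U,b,p_0$ but not on $k$; hence by Gronwall $V(t)\le V(t_0)e^{-\omega(t-t_0)}+C/\omega$ with $C/\omega$ \emph{independent of $k$} for $k\ge k_*$. (Equivalently, avoiding the direct absorption of the history term, insert the a priori ultimate bound from Theorem~\ref{th:main2} on the right-hand side, let $t_0\to-\infty$ along bounded complete trajectories, and solve the resulting linear inequality for the radius.) Translating back through $V\simeq E_{pl}\simeq E_*$ produces a ball $\mathcal B\subset H_0^2(\Omega)\times L_2(\Omega)$ of radius bounded uniformly for $k\ge k_*$ that absorbs every bounded family of plate states arising from $y_0\in Y_{\rho_0}$; uniformity in $y_0$ is inherent to absorption.

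The main obstacle is exactly this $k$-uniformity. Large damping both slows the relaxation of the bending component — so a $k$-uniform exponential \emph{rate} cannot be expected — and, handled naively, amplifies the destabilizing contributions of $-Uu_x$ and the delay load $q^u$. The shifted multiplier kills the worst $k$-dependence at the cost of tying $\epsilon$ to $1/k_0$; the non-obvious payoff is that the decay rate $\omega$ and the forcing $C$ then scale alike, so the absorbing radius $\sqrt{C/\omega}$ is $k$-uniform. The residual delicate point is the delay term: one must balance the history functional against $q^u$ with a carefully ordered choice of split parameters, and this is where $k_*$ and the radius pick up their dependence on $\rho_0$, through $t^*(\rho_0,U,\Omega)$.
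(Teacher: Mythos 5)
The paper does not actually prove this lemma: it is quoted verbatim from \cite{conequil1} (Lemma 4.6 there), so there is no internal proof to compare against line by line. Your argument is, however, a correct and self-contained route to the statement, and its key devices coincide with the Lyapunov machinery the paper \emph{does} display elsewhere: the ``shifted'' multiplier $\big(u_t+\tfrac{k_0}{2}u,u\big)$ is exactly the combination $\langle z_t,z\rangle+\tfrac{k}{2}\|z\|^2$ appearing in the functional $V(T^z_t(x_0))$ of Section \ref{expsec}, and the weighted history integral is the standard device from \cite{delay} for the load $q^u$. The genuinely non-routine point — that scaling the multiplier block by $\epsilon=c_0/k_0$ makes the decay rate $\omega$ and the forcing $C$ degrade at the same rate, so that the absorbing radius $C/\omega$ stays bounded as $k\to\infty$ — is identified correctly and is precisely what distinguishes this lemma from the mere existence of an absorbing set for each fixed $k$ (which is already in \cite{delay}). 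I verified the cancellation of the $k_0(u_t,u)$ cross term and the identity $\tfrac{d}{dt}E_{pl}=-k_0\|u_t\|^2-U(u_x,u_t)-(q^u,u_t)$; both are right, and every remaining term indeed carries either a factor $\epsilon$ or a factor $1/k_0$ after Young's inequality against the dissipation.

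Two small points to tidy up, neither of which is a gap. First, for Berger one has exactly $(f(u),u)=\|\nabla u\|^4-b\|\nabla u\|^2=4\Pi_*(u)-b\|\nabla u\|^2$, so the correct lower bound is $(f(u),u)\ge(4-4\delta)\Pi_*(u)-C_\delta$; the term $-\delta\|\Delta u\|^2$ you wrote does not arise (harmlessly so, since you only need the coercive $\Pi_*$ contribution, together with Lemma \ref{l:epsilon} for the low frequencies). Second, your stated order of choosing constants is slightly off: since the absorption of $\|q^u\|^2/(\delta' k_0)$ into $-\epsilon\mu\int_{t-t^*}^t\|\Delta u\|^2$ requires $c_0\mu\delta'>C(t^*)$, the parameter $\mu$ must be chosen \emph{after} (or jointly with) $c_0$, and there is in fact no reason to take $c_0$ small — $c_0=1$ works, with only $k_*$ needing to be large at the end. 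With that reordering the constant-chasing closes without circularity, and the resulting radius depends on $p_0,b,\Omega,U,\rho_0$ (through $t^*$) but not on $k\ge k_*$ or on $y_0$, as required.
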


\begin{remark}\label{beta}
When we include the effects of static damping---of the form ~$\beta u$, as in \eqref{withbeta}---in the plate dynamics, we also have the existence of the attractor (and corresponding absorbing set) whose size also can be made independent of $\beta \ge 0$.
\end{remark}

 The results in \cite{delay} perform analysis on the abstract delay equation \eqref{gendelay} on $\mathbf H$, including energy methods which show the existence of a true global attractor $\mathscr U$ (as well as an absorbing set $\mathscr B \supset \mathscr U$) for the the delay evolution $(T_t,\mathbf H)$. These results depend on taking $k_0>0$ (where $k_0$ is the damping coefficient in \eqref{gendelay}). (Note that, for the specific delayed plate of interest here \eqref{reducedplate}, we have $k_0=k+1$, and thus we could take $k \in (-1,\infty]$.) Then, to achieve the results in Theorems \ref{th:main2} and \ref{indeppara} projections are taken which suppress the delay component of the evolution. 
 
  \begin{remark} In what follows, the notation $\mathscr U\subset \mathscr B$ correspond to the global attractor and (a fixed) uniform absorbing set for the dynamics $(T_t,\mathbf H)$. The notation $\mathcal U  \subset \mathcal B$ will refer to the attracting set and absorbing ball for the {\em projected dynamics} $(u,u_t)$ of $(S_t,Y)$ (delay and flow components dropped).
 \end{remark}
\subsection{Weak Stability}
Given the existence of a compact global attracting set for the plate dynamics, as well as the finiteness of the dissipation integral (Corollary \ref{dissint}), we have the following theorem (given as Theorem 7.1 \cite{conequil1} and applying for \eqref{withbeta} for $\beta \ge 0$):
\begin{theorem}\label{convergenceprops}
Assume $k> 0$. For any initial data $y_0=(u_0,u_1;\phi_0,\phi_1)\in Y$ and any sequence of times $t_n \to \infty$ there is a subsequence $t_{n_j}$ and a point $\widehat y = (\widehat u,\widehat w;\widehat \phi, \widehat \psi)$ so that:
 \begin{enumerate}
 \item $S_{t_{n_j}}(y_0) \rightharpoonup  \widehat y$. \vskip.2cm
\item
$\| u(t_{n_j}) - \widehat{u}\|_{2,\Omega} \rightarrow 0$. \vskip.2cm
\item $\|u_t(t)\|^2_{0} \to 0, ~t \to \infty$, and hence $\widehat w =0$. \vskip.2cm
\item Along the sequence of times $t_{n_j} \to \infty$ we have \begin{align}
\sup_{\tau \in [-c,c]}\|u(t_{n_j}+\tau)-\widehat u\|_{2-\delta,\Omega} \to 0 ~\text{ for any fixed}~ c > 0, ~\delta\in (0,2).
\end{align}
\end{enumerate}
\end{theorem}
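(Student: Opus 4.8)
The plan is to prove Theorem \ref{convergenceprops} by combining the compact attraction of the plate component (Theorem \ref{th:main2}) with the finiteness of the dissipation integral (Corollary \ref{dissint}), and then upgrading the resulting convergences from subsequential/weak to the strong statements (2)--(4) listed. First I would fix the initial datum $y_0 \in Y$ and a sequence $t_n \to \infty$. By the global-in-time bound of Lemma \ref{globalbound} (equivalently \eqref{stableS}), the trajectory $S_t(y_0)$ is bounded in $Y$ uniformly in $t$; hence along any sequence $t_n\to\infty$ the points $S_{t_n}(y_0)$ are bounded in $Y$. Since $Y_{pl}=H_0^2(\Omega)\times L_2(\Omega)$ and $W^1(\realsthree_+)\times L_2(\realsthree_+)$ are Hilbert spaces, a subsequence $t_{n_j}$ converges weakly in $Y$ to some $\widehat y=(\widehat u,\widehat w;\widehat\phi,\widehat\psi)$; this gives (1).

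Next, for (3), Corollary \ref{dissint} provides $\int_0^\infty \|u_t(\tau)\|^2_{L_2(\Omega)}\,d\tau \le K<\infty$ since $k>0$. To conclude $\|u_t(t)\|^2_\Omega \to 0$ as $t\to\infty$ (not merely along a subsequence), I would invoke a uniform continuity / equicontinuity argument for the map $t\mapsto \|u_t(t)\|^2_\Omega$: using the energy identity \eqref{eident} together with the bound \eqref{apr}, one controls the modulus of continuity of this scalar function uniformly in $t$ (the right-hand forcing terms, including the hyperbolic Neumann contribution, are controlled by the global bound). A finite integral of a nonnegative uniformly continuous function must tend to zero, so $\|u_t(t)\|^2_\Omega\to 0$, and in particular the weak limit $\widehat w=0$. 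This also shows the weak limit of the velocity component is independent of the chosen subsequence.

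For (2), I would use Theorem \ref{th:main2}: the plate component $(u(t),u_t(t))$ converges in $Y_{pl}$ to the compact finite-dimensional set $\mathcal U$. Hence along $t_{n_j}$ one extracts (passing to a further subsequence) convergence $\|(u(t_{n_j}),u_t(t_{n_j}))-(\nu_0,\nu_1)\|_{Y_{pl}}\to 0$ for some $(\nu_0,\nu_1)\in\mathcal U$; combined with the weak convergence from (1) and uniqueness of weak limits, $\nu_0=\widehat u$, giving the \emph{strong} convergence $\|u(t_{n_j})-\widehat u\|_{2,\Omega}\to 0$. Finally for (4), I would combine the strong $H^2$-convergence at the times $t_{n_j}$ with an equicontinuity estimate for the $H^{2-\delta}$-valued curve $\tau\mapsto u(t_{n_j}+\tau)$ on the compact window $[-c,c]$: the plate equation \eqref{flowplate}, the uniform bounds \eqref{apr}, the interpolation inequality $\|\cdot\|_{2-\delta}\le \|\cdot\|_2^{1-\delta/2}\|\cdot\|_0^{\delta/2}$, and the Lipschitz-in-time control of $u$ in a lower topology (from the bounded velocity) yield a uniform modulus of continuity; an Arzel\`a--Ascoli / uniform-continuity argument then propagates the convergence at $t_{n_j}$ to the whole interval, using that the limit is stationary in $\tau$ because $u_t\to 0$.

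The main obstacle I expect is establishing the \emph{uniform}-in-$t$ equicontinuity needed in steps (3) and (4) in the absence of regularizing (parabolic or rotational-inertia) effects: the hyperbolic Neumann map loses a derivative, so the forcing term $r_\Omega\,tr[(\partial_t+U\partial_x)\phi]$ on the right-hand side of the plate equation is only controlled in a negative-order space on the interface (cf. the hidden trace regularity \eqref{trace}), which complicates bounding $\frac{d}{dt}\|u_t(t)\|^2_\Omega$ directly. The way around this is to avoid differentiating the velocity and instead exploit the energy identity \eqref{eident} over short time windows, together with the global $Y$-bound and the delay reformulation of Theorem \ref{rewrite} (valid for $t$ large, since the flow data are localized), so that the plate dynamics are driven by the bounded delay term $q^u$ and the lower-order terms $-(\partial_t+U\partial_x)u$ rather than by the rough flow trace; this is precisely the setting inherited from \cite{delay,conequil1}, and the argument is essentially a packaging of those estimates.
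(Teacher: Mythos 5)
The paper does not actually prove this theorem: it is quoted verbatim from \cite{conequil1} (Theorem 7.1 there), and the sentence preceding it names exactly the two ingredients the proof rests on --- the compact attracting set for the plate component (Theorem \ref{th:main2}) and the finiteness of the dissipation integral (Corollary \ref{dissint}). Your proposal assembles precisely these ingredients, and items (1), (2) and (4) are handled correctly: weak compactness of bounded sets in the Hilbert space $Y$ for (1); attraction to the compact set $\mathcal U$ plus uniqueness of weak limits for the strong $H^2$ convergence in (2); and, for (4), the estimate $\|u(t+\tau)-u(t)\|_0\le\sqrt{|\tau|}\bigl(\int_t^{t+\tau}\|u_t\|^2\bigr)^{1/2}$ together with the vanishing tail of the dissipation integral and interpolation between $L_2$ and $H^2$. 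One hypothesis mismatch worth noting: Theorem \ref{th:main2} (and the delay reduction you invoke) requires spatially localized flow data, whereas the statement reads ``any $y_0\in Y$''; your argument, like the paper's main results, really covers $y_0\in Y_{\rho_0}$.

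The soft spot is item (3). The energy identity \eqref{eident} controls the modulus of continuity of the \emph{total} energy $\mathcal E(t)$, not of the kinetic energy $\|u_t(t)\|^2$ alone, since potential, flow and kinetic energies can exchange on short time scales; and differentiating $\|u_t\|^2$ requires pairing $u_{tt}$ (which lives only in $H^{-2}(\Omega)$ at the finite-energy level) against $u_t\in L_2(\Omega)$, which is not a bounded pairing. So the Barbalat-type statement ``a nonnegative uniformly continuous function with finite integral tends to zero'' cannot be applied to $\|u_t(t)\|^2$ as you first state it. You do flag this and point to the right repair: for $t$ large the delay reduction \eqref{reducedplate} together with \eqref{qests} shows $u_{tt}$ is uniformly bounded in $H^{-2}(\Omega)$, hence $u_t$ is uniformly H\"older continuous into $H^{-1}(\Omega)$; combining this weak-norm equicontinuity with the \emph{strong} $L_2$-precompactness of $\{u_t(t)\}$ furnished by Theorem \ref{th:main2} (so that a putative sequence with $\|u_t(t_n)\|\ge\epsilon$ has a strong, hence nonzero, $L_2$-limit, whose $H^{-1}$ norm then stays bounded below on a uniform time window) contradicts the finite dissipation integral. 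Make that combination explicit and the proof is complete; without the compactness step, weak-norm equicontinuity alone does not suffice.
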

In view of the above result, it is clear that the general, novel mathematical  challenge  in resolving the flutter problem is to improve the convergence of the flow component of the dynamics from {\it weak} to {\it strong}. This is a highly non-trivial task,  taking into consideration the lack of compactness properties of the dynamics, along with the lack of dissipation acting on the flow equation. 
\section{From Weak to Strong}\label{wts}
We present (in a reorganized way) the key past results in \cite{ryz,ryz2}, which yield supporting results that can then be applied in the proof of our main results here. Additionally, we restate in the present context certain ideas and results which were critically used in \cite{conequil1}. 
\subsection{Hadamard Continuity}
The  key results below address the {\em continuous dependence} property of the dynamics, namely, that $S_t(\cdot)$ is Hadamard continuous on $Y$ (and $Y_s$ on {\em finite time intervals}). This was first addressed within the proof of Theorem 3.1 \cite[pp. 1963--1964]{conequil1}, and the proof of both Lemma \ref{hadcont} and Theorem \ref{strongcont} can be found in \cite{conequil1}. (In fact, the Hadamard continuity results below are analogously true for von Karman dynamics.)
\begin{lemma}\label{hadcont} Let $0\le U<1$ and assume $p_0 \in L_2(\Omega)$. Assume $y_0=(u_0,u_1;\phi_0,\phi_1) \in Y$.
Consider the dynamics generated by generalized solutions to \eqref{flowplate} (or \eqref{withbeta} if $\beta>0$), denoted $S_t(y_0)$. For any $k\ge 0,~ \beta \ge 0$, and for any fixed $T>0$, we have
 $$S_{t} (y_0^m) \rightarrow y(t)=S_t(y_0)  \in Y,$$ when $y^m_0 \to y_0 \in Y,$ uniformly
 for all $t< T$; this convergence may depend---possibly---on $T > 0$.
\end{lemma}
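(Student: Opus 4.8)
\textbf{Proof strategy for Lemma \ref{hadcont}.}

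The plan is to work at the level of generalized (limits of strong) solutions and exploit the locally Lipschitz bound \eqref{lip} together with the uniform-in-data energy bound \eqref{stableS} from Theorem \ref{nonlinearsolution}. First I would observe that, since $y_0^m \to y_0$ in $Y$, the sequence $\{y_0^m\}$ is bounded in $Y$, say $\|y_0^m\|_Y \le R$ and $\|y_0\|_Y\le R$; hence by \eqref{stableS} the whole family of trajectories $S_t(y_0^m)$ and $S_t(y_0)$ remains in a fixed ball of $Y$ for all $t\ge 0$. On the finite interval $[0,T]$ the Lipschitz estimate \eqref{lip} then gives directly
\begin{equation*}
\|S_t(y_0^m)-S_t(y_0)\|_Y \le C(R,T)\,\|y_0^m-y_0\|_Y \longrightarrow 0,
\end{equation*}
uniformly for $t\le T$, with the constant depending on $R$ and $T$ but not on $m$. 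This already yields the claimed convergence, and the possible dependence on $T$ is exactly the dependence appearing in $C(R,T)$ through Gronwall-type growth in the Lipschitz constant.

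The substantive point, which is really what is being invoked here, is the passage from \emph{strong} solutions (for which the Lipschitz/energy estimates are derived by differential energy methods on the PDE system) to \emph{generalized} solutions, since \eqref{lip} as stated in Theorem \ref{nonlinearsolution} already covers generalized solutions. If one wanted to reprove it rather than cite it, the key steps would be: (i) for strong solutions $y_1,y_2\in\cD(\bT)$, subtract the two copies of \eqref{flowplate}, test the plate difference equation with $(u^1-u^2)_t$ and the flow difference equation with $(\phi^1-\phi^2)_t+U(\phi^1-\phi^2)_x$, so that the strongly coupled boundary/pressure terms cancel in the energy identity exactly as in the derivation of \eqref{eident}; (ii) control the nonlinear difference $f(u^1)-f(u^2)$ using the a priori bound \eqref{apr} (Lemma \ref{globalbound}) — for the Berger nonlinearity $f_B(u)=[b-\|\nabla u\|^2]\Delta u$ this is a locally Lipschitz estimate in $H_0^2(\Omega)$ since $\|\nabla u^i\|^2$ is globally bounded and $\Delta u^i\in L_2(\Omega)$ — producing a term bounded by $C(R)\|u^1-u^2\|_{H^2}\|(u^1-u^2)_t\|$; (iii) absorb and apply Gronwall on $[0,T]$, yielding $C(R,T)=C(R)e^{C(R)T}$; (iv) pass to generalized solutions by density of $\cD(\bT)$ in $Y$ and lower-semicontinuity of the $Y$-norm, using that the estimate constant is uniform on the ball $B_Y(R)$.

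The main obstacle — and the reason this is a genuine lemma rather than a triviality — is that the estimate is only valid on \emph{finite} time horizons: the flow equation carries no damping, so the energy identity for the difference does not decay, and any naive attempt to make the Gronwall constant uniform in $T$ fails. Relatedly, one must be careful that the cancellation of the interface terms (the Neumann downwash $[(\partial_t+U\partial_x)u]_{\text{ext}}$ against $r_\Omega tr[(\partial_t+U\partial_x)\phi]$) and of the term $2U\langle tr[\phi],u_x\rangle$ in $E_{int}$ goes through at the level of the \emph{difference} of solutions, which requires the trace regularity \eqref{trace} to make the pairings meaningful; this is exactly where the subsonic hypothesis $U<1$ and the energy \emph{equality} \eqref{eident} (as opposed to the supersonic case, cf. the Remark after Theorem \ref{nonlinearsolution}) enter. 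Since \eqref{lip} is already established in \cite{conequil1,webster}, I would simply cite it and record the uniform-on-bounded-sets conclusion as above.
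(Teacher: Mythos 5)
Your proposal is correct and matches the paper's approach: the paper gives no in-text proof of Lemma \ref{hadcont}, deferring to \cite{conequil1}, and the result is indeed an immediate consequence of the locally Lipschitz estimate \eqref{lip} together with the uniform bound \eqref{stableS} from Theorem \ref{nonlinearsolution}, exactly as you observe. Your sketch of how \eqref{lip} itself is derived (energy identity for the difference, local Lipschitz control of the Berger nonlinearity via \eqref{apr}, Gronwall on $[0,T]$, density of $\cD(\bT)$) is the standard argument from \cite{conequil1,webster} and is sound.
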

\begin{remark}
The corresponding delay system $(T_t,\mathbf H)$ exhibits the Hadamard continuity property on any $[t_0,T]$ as well. This follows from Lemma \ref{hadcont} and Theorem \ref{rewrite}. Moreover, in \cite{delay} the {\em continuous dependence} property is shown directly from energy estimates on the delay system \eqref{gendelay}.
\end{remark}
\begin{remark}\label{passing0}
At present, it is not clear that the full dynamics for \eqref{flowplate} are {\em global-in-time} Hadamard continuous. Whether this is the case is related to the decay rate of $||u_t(t)||_{L_2(\Omega)} \to 0$; indeed, to obtain this global-in-time property it is sufficient for $u_t \in L_1(T,\infty;L_2(\Omega))$. With $k>0$,  we do have $u_t \in L_2(0, \infty;L_2(\Omega))$; however, improving $L_2(0,\infty)$ to $L_1(0;\infty)$ in general requires very substantial work. See Remark \ref{passing}.
\end{remark}
If we consider  the addition of {\em large static damping} (see Remark \ref{beta}), as well as large viscous damping, we see that the Hadamard continuity can be improved to {\em global-in-time} (this was one of the primary results in \cite{conequil1}):
\begin{theorem}\label{strongcont}
Let $0\le U<1$ and assume $p_0 \in L_2(\Omega)$. Assume $y_0=(u_0,u_1;\phi_0,\phi_1) \in Y$. 

Consider the dynamics generated by generalized solutions to \eqref{withbeta} (i.e., with $\beta >0$), denoted $S_t(y_0)$. Assuming that both damping parameters are sufficiently large $ k \geq  k_c > 0$, $\beta \ge \beta_c>0$,  the semigroup $S_t(\cdot)$ {\em is} uniform-in-time Hadamard continuous, i.e., for any sequence $y_0^m \to y_0$ in $Y$ and any $\epsilon>0$ there is an $M$ so that for $m>M$
$$\sup_{t>0} \|S_t(y^m_0) - S_t(y_0)\|_{Y_{\rho}} < \epsilon. $$
\end{theorem}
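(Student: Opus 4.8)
The plan is to exploit the fact that, in the presence of large static damping $\beta u$ *and* large viscous damping $ku_t$, the plate velocity $u_t(t)$ decays *exponentially* in $L_2(\Omega)$, which upgrades the dissipation integral of Corollary \ref{dissint} to the integrability $u_t \in L_1(0,\infty;L_2(\Omega))$. Once this is in hand, the global-in-time Hadamard continuity follows by a bootstrapping/Gronwall argument on the difference of two trajectories, using the local Hadamard continuity of Lemma \ref{hadcont} on a fixed finite horizon $[0,T_0]$ and then propagating past $T_0$ with a constant that does *not* grow in time, precisely because the only obstruction — the nonconservative/low-order couplings and the plate nonlinearity — can be absorbed against the exponentially small velocity.

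First I would establish the exponential decay $\|u_t(t)\|_{L_2(\Omega)}^2 \le C(\|y_0\|_Y) e^{-\omega t}$ for some $\omega>0$ whenever $k\ge k_c$, $\beta\ge\beta_c$. The mechanism is a Lyapunov functional of the form $V(t) = \mathcal E(t) + \epsilon \Psi(t)$, where $\mathcal E$ is the total energy (satisfying the energy identity \eqref{eident}, adapted to \eqref{withbeta} with the $\tfrac{\beta}{2}\|u\|^2$ term) and $\Psi(t) = (u_t,u)_\Omega$ is the standard multiplier that recovers coercivity of $\|\Delta u\|^2 + \beta\|u\|^2$ modulo lower-order terms. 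The static term $\beta u$ is what makes the equilibrium unique (for $\beta$ large, linearizing kills the nonconservative term $Uu_x$ and the Berger cubic is of lower order relative to $\beta\|u\|^2$ via Lemma \ref{l:epsilon}), so $\mathcal N = \{0\}$ effectively and the energy $\mathcal E_*$ itself decays exponentially; the flow contribution is handled through $E_{int}$ using Lemma \ref{EIN}. Here the size requirements $k\ge k_c$, $\beta\ge\beta_c$ are used twice: to absorb the sign-indefinite $E_{int}$ and the delay/nonconservative terms, and to get a strictly negative dissipation rate $\dot V \le -\omega V + C$. One subtlety: the flow energy $E_{fl}$ is *not* itself dissipated, so the exponential decay is genuinely only for the plate component $E_{pl,\beta}$; the flow decays only in the *local* energy sense $Y_{fl,\rho}$ through the representation $\phi = \phi^* + \phi^{**}$, with $\phi^*$ decaying by Huygens (\eqref{starstable}) and $\phi^{**}$ controlled by $u_t,u$ via the formula in Theorem \ref{flowformula} and Lemma \ref{compact2}. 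This is exactly why the conclusion is stated in the $Y_\rho$ topology rather than $Y$.

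Next, given $u_t \in L_1(0,\infty;L_2(\Omega))$, I would run the Hadamard argument. Write $z(t) = S_t(y_0^m) - S_t(y_0)$; subtracting the two copies of \eqref{withbeta} and pairing with the natural multiplier gives an energy-type inequality $\frac{d}{dt}\|z(t)\|_Y^2 \le g(t)\,\|z(t)\|_Y^2 + (\text{flow terms})$, where $g(t)$ collects the coefficients coming from the difference of the nonlinearities $f(u^m)-f(u)$ and the coupling $(\partial_t + U\partial_x)u$. The point is that, because of the uniform absorbing ball (Lemma \ref{indeppara}, Remark \ref{beta}) and the exponential decay just proved, $g \in L_1(0,\infty)$: the dangerous factor is multiplied by quantities like $\|u_t^m\| + \|u_t\|$ (or their higher-regularity analogues from the decomposed part), which are integrable. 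Gronwall then yields $\|z(t)\|_Y^2 \le \|z(0)\|_Y^2 \exp\!\big(\int_0^\infty g\big)$ *uniformly in $t$*; combined with the finite-horizon control from Lemma \ref{hadcont} (to handle the initial transient and the flow terms, which one localizes to $K_\rho$ and estimates via Lemma \ref{compact2} against the exponentially-decaying plate data), one gets $\sup_{t>0}\|S_t(y_0^m)-S_t(y_0)\|_{Y_\rho} \to 0$ as $m\to\infty$.

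The main obstacle is the first step — propagating the *exponential* (not merely $L_2$-integrable) decay of $u_t$ through the flow-plate coupling without any smoothing in the plate equation. The nonconservative term $Uu_x$ and the delay-type memory $q^u(t)$ from Theorem \ref{rewrite} fight the damping, and the flow feeds energy back into the plate through the boundary; controlling this requires the static term $\beta u$ to dominate and the viscous constant $k$ to be large enough to beat the delay (cf. the destabilizing-delay phenomenon, \cite{pignotti1,pignotti}). Making the Lyapunov functional genuinely decreasing — i.e., pinning down $k_c,\beta_c$ in terms of $U$, $\Omega$, $b$, $p_0$, $\rho_0$ — is the delicate computation, and it is exactly the part that \emph{fails} in the present paper's setting (where $\beta=0$), which is why Theorem \ref{conequil} needs the far more elaborate decomposition argument rather than this clean Gronwall route.
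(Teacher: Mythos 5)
Your overall strategy is the one the paper attributes to \cite{conequil1} (and replicates for the decomposed $z$-dynamics in Lemma \ref{hadcont*}): large static plus viscous damping forces exponential decay of the plate velocity, which upgrades $u_t$ from $L_2(0,\infty;L_2(\Omega))$ to $L_1$ --- exactly what Remark \ref{passing0} identifies as sufficient for global-in-time Hadamard continuity. One caveat on your second step: a literal Gronwall inequality $\frac{d}{dt}\|z\|_Y^2\le g(t)\|z\|_Y^2$ with $g\in L_1$ is not what comes out, because the Lipschitz estimate $\|f(u^m)-f(u)\|\le C(R)\|u^m-u\|_2$ paired with $\mathbf z_t$ produces a \emph{constant} (non-integrable) Gronwall coefficient, and the naive bound grows like $e^{C(R)t}$. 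The correct bookkeeping --- visible in the proof of Lemma \ref{hadcont*} --- bounds the nonlinear difference by $C(R)$ on the absorbing ball, arrives at an additive estimate $E_z(t)\le C\{E_z(0)+\int_0^t(\|\mathbf z_t\|^2+C(R)\|\mathbf z_t\|)\,d\tau\}$, and splits the integral at a time $T^*$: the head is controlled by the finite-time continuity of Lemma \ref{hadcont}, and the tail is made small \emph{in absolute terms} (not relative to $E_z(0)$) by the uniform exponential decay of both velocities. Your phrase about absorbing against the exponentially small velocity is the right idea, but the mechanism is additive, not multiplicative.

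The genuine gap is in your first step. The Lyapunov functional $V=\mathcal E+\epsilon(u_t,u)$ on the \emph{full} flow--plate system cannot yield $\dot V\le-\omega V+C$: the flow energy $E_{fl}$ carries no dissipation, and --- more fatally --- computing $\frac{d}{dt}(u_t,u)$ produces the aeroelastic pressure term $\langle tr[(\partial_t+U\partial_x)\phi],u\rangle$, which is controlled only through the hidden trace regularity \eqref{trace} ($L_2$ in time, $H^{-1/2}$ in space) and cannot be absorbed by the plate dissipation no matter how large $k$ and $\beta$ are. You acknowledge that the decay is ``only for the plate component,'' but the functional you write down contains the full $\mathcal E$, so the differential inequality does not close. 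The route actually taken (Section \ref{expsec} here, and Section 7 of \cite{conequil1}) is to first invoke the reduction Theorem \ref{rewrite}, which replaces the flow trace by $-(\partial_t+U\partial_x)u-q^u(t)$ and is valid only for compactly supported flow data and $t>t^{\#}$, and then to build a \emph{delay-adapted} Lyapunov functional containing history integrals $\mu\int_{t-t^*}^t\|\Delta u\|^2\,d\tau$ and $\mu\int_0^{t^*}\int_{t-s}^t\|\Delta u\|^2\,d\tau\,ds$ so that $q^u$ can be absorbed via the estimates \eqref{qests}. You correctly name $q^u$ as the obstacle in your closing paragraph, but the functional you propose does not contain the terms needed to beat it, and without the delay reduction you never decouple from the undamped flow in the first place.
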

This theorem and its proof are given in Section 7 of \cite{conequil1}. Theorem \ref{strongcont} will be used below in an auxiliary fashion on a decomposed portion of the full flow-plate dynamics which resembles \eqref{withbeta} (i.e., has static damping added)---Theorem \ref{hadcont*}. 

\begin{remark} The minimal damping coefficients $k_c$ and $\beta_c$ for the result above depend on the invariant set $\mathscr B$ for the delayed plate dynamics, which itself depends on the loading $p_0$ and $b$, as well as the domain $\Omega$ and  other physical constants such as  $U,\rho_0$, but {\em is independent on the particular initial data of the system}. \end{remark}

\subsection{Bounds in Higher Norms}\label{highernorms00}
 We now state and prove the critical supporting theorem for this treatment (and also \cite{conequil1}) Theorem \ref{highernorms0}: \begin{quote} For finite energy solutions to the flow-plate system, a uniform-in-time bound on plate solutions in a higher topology yields the desired convergence to equilibria result (as in Theorem \ref{conequil}) in the topology $Y_{\rho}$ for any $\rho>0$.\end{quote} In other words, beginning with finite-energy solutions, if we further know that the plate dynamics are sufficiently regular, then compactness properties can be transferred to the flow dynamics. This result is independent of the particular structure of the plate dynamics---only global-in-time bounds in higher norms of the plate solution $(u,u_t)$ are needed; indeed, Theorem \ref{highernorms0} follows from the structure of the (linear) coupling via the Neumann flow condition, written in terms of $[u_t+Uu_x]_{\text{ext}}$, and appearing in Theorem \ref{flowformula}. 

\begin{theorem}\label{highernorms0} 
Suppose $0\le U \ne 1$ and $p_0 \in L_2(\Omega)$, and take any $k > 0$.  Let $(u,\phi)$ be a weak solution to \eqref{flowplate} (or \eqref{withbeta} if $\beta>0$) with finite energy (flow-localized) initial data in $y_0 \in Y_{\rho_0}$. If there is a time $T^*$ so that
\begin{equation}\label{thisone}\sup_{t\in [T^*,\infty)}\left\{||u(t)||_4^2+||\Delta u_t(t)||_0^2+||u_{tt}(t)||^2_0\right\}\le C_1,\end{equation} then for any sequence of times $t_n \to +\infty$, there is a subsequence of times $t_{n_k}$ and a point $\widehat{y}=(\widehat u,0;\widehat \phi,0)$ with $(\widehat u,\widehat \phi) \in \mathcal N$ so that
$$\lim_{k \to \infty} d_{Y_{\rho}}(S_{t_{n_k}}(y_0),\widehat y) = 0$$ for any $\rho>0$. This implies that the result of Theorem \ref{conequil} then holds.
\end{theorem}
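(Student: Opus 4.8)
The plan is to exploit the explicit flow representation from Theorem~\ref{flowformula} together with the compactness criterion in the local flow energy sense (Lemma~\ref{compactnesscriterion}) and the already-established weak stability properties (Theorem~\ref{convergenceprops}). The strategy splits the flow into $\phi = \phi^* + \phi^{**}$ as in \eqref{flow1}--\eqref{flow2}. For $\phi^*$, which carries the (localized) initial flow data and satisfies the homogeneous Neumann condition, Huygens' principle gives \eqref{starstable}: it already converges to zero in the local flow energy sense, so its contribution to the limit is trivial. The entire burden therefore falls on $\phi^{**}$, the solution to \eqref{flow2} driven by the Neumann datum $h = -[u_t + U u_x]_{\text{ext}}$.

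First I would fix a sequence $t_n \to \infty$. By Theorem~\ref{convergenceprops} (valid since $k>0$), after passing to a subsequence $t_{n_k}$ we have $S_{t_{n_k}}(y_0) \rightharpoonup \widehat y = (\widehat u, 0; \widehat \phi, \widehat \psi)$ weakly in $Y$, with $\|u(t_{n_k}) - \widehat u\|_{2,\Omega} \to 0$, $\|u_t(t_{n_k})\|_0 \to 0$, and the uniform-in-$\tau$ convergence of item (4), $\sup_{\tau\in[-c,c]}\|u(t_{n_k}+\tau)-\widehat u\|_{2-\delta,\Omega}\to 0$. The task is to upgrade the weak flow convergence $\phi(t_{n_k})\rightharpoonup\widehat\phi$, $\phi_t(t_{n_k})\rightharpoonup\widehat\psi$ to strong convergence in $Y_{fl,\rho}$ for every $\rho$, and to identify $(\widehat u,\widehat\phi)$ as an element of $\mathcal N$ (with $\widehat\psi = 0$). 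Strong convergence will follow from Lemma~\ref{compactnesscriterion} once I produce, for each $\rho>0$, a uniform bound on $\|\nabla\phi^{**}(t)\|_{\eta,K_\rho}^2 + \|\phi^{**}_t(t)\|_{\eta,K_\rho}^2$ for some fractional $\eta>0$ along the sequence. Here is exactly where hypothesis \eqref{thisone} enters: Lemma~\ref{compact2} bounds this fractional local flow energy by $\|u\|^2_{H^{s+\eta}(t-t^*,t;H^{2+\eta}_0(\Omega))} + \|u_t\|^2_{H^{s+\eta}(t-t^*,t;H^{1+\eta}_0(\Omega))}$ for $0<s+\eta<1/2$, and the global-in-time bound \eqref{thisone} on $\|u(t)\|_4$, $\|\Delta u_t(t)\|_0$, $\|u_{tt}(t)\|_0$ controls exactly these quantities uniformly in $t$ (by interpolation between $H^4$ and $H^2$, respectively $H^2$-in-space with $H^1$-in-time from $u_{tt}\in L_2$). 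This yields the compactness hypothesis of Lemma~\ref{compactnesscriterion} for $\{(\phi^{**}(t_{n_k}),\phi^{**}_t(t_{n_k}))\}$, hence a strongly convergent subsequence in $\widetilde Y_{fl}$; combined with \eqref{starstable} for $\phi^*$ we get $S_{t_{n_k}}(y_0) \to \widehat y$ strongly in $Y_\rho$.

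It then remains to show $(\widehat u,\widehat\phi)\in\mathcal N$ and $\widehat\psi=0$. For $\widehat\psi=0$: strong $Y_\rho$-convergence together with $u_t(t_{n_k})\to 0$ and the finiteness of the dissipation integral (Corollary~\ref{dissint}) should force the flow velocity to vanish; more carefully, one passes to the limit in the energy relation / uses that the limit point must be a stationary state. To see it is stationary, I would pass to the limit in the weak formulation of \eqref{flowplate} over a window $[t_{n_k}-c, t_{n_k}+c]$: the uniform convergence $u(t_{n_k}+\tau)\to\widehat u$ in $H^{2-\delta}$ (item (4)) kills $u_{tt}$, $u_t$ in the plate equation in the limit; the plate nonlinearity $f(u)$ passes to $f(\widehat u)$ by the $H^2$-convergence and the structure of the Berger (or von Karman) term; the flow converges strongly on every $K_\rho$, so its trace data converge and the Neumann condition degenerates to $\partial_z\widehat\phi = U\partial_x\widehat u_{\text{ext}}$; and the coupling term $r_\Omega tr[(\partial_t + U\partial_x)\phi]$ passes to $U r_\Omega tr[\partial_x\widehat\phi]$. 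This identifies $(\widehat u,\widehat\phi)$ as a variational solution of \eqref{static}, i.e., a member of $\mathcal N$. Finally, since every subsequence of an arbitrary $t_n\to\infty$ has a sub-subsequence converging in $Y_\rho$ to \emph{some} point of the (fixed) set $\{(\widehat u,0;\widehat\phi,0):(\widehat u,\widehat\phi)\in\mathcal N\}$, the $\inf$ over $\mathcal N$ of the distance tends to zero, which is precisely the conclusion of Theorem~\ref{conequil}.

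The main obstacle I anticipate is the passage to the limit in the flow equation and the identification of $\widehat\psi = 0$: the flow equation is undamped, so there is no dissipation integral for the flow to lean on, and one must argue entirely through the strong local convergence obtained from Lemma~\ref{compact2} and the degeneracy of the material derivative $(\partial_t + U\partial_x)u \to U\partial_x\widehat u$ (using $u_t(t_{n_k})\to 0$). Making the limit of the coupling term $(\partial_t+U\partial_x)tr[\phi]$ rigorous will likely require the hidden trace regularity \eqref{trace} together with the strong $Y_\rho$ convergence, rather than a naive trace estimate. A secondary technical point is ensuring the fractional-in-time norms in Lemma~\ref{compact2} are genuinely controlled by the three quantities in \eqref{thisone}; this is an interpolation argument (space: between $H^4$ and $H^2$ for $u$, between $H^2$ and, via $u_{tt}$, a time-derivative bound for $u_t$; time: the uniform $L_\infty$-in-$t$ bound upgraded to $H^{s+\eta}$-in-$t$ on a bounded window of length $t^*$) but must be stated with the right indices so that $0 < s+\eta < 1/2$ is respected.
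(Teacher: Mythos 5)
Your proposal follows essentially the same route as the paper's proof: the $\phi=\phi^*+\phi^{**}$ splitting with Huygens' principle disposing of $\phi^*$, interpolation of the hypothesis \eqref{thisone} into the fractional space-time norms required by Lemma \ref{compact2} (the paper takes the interpolation indices $\alpha_1=\alpha_2=\eta\le 1/3$, consistent with your constraint $0<s+\eta<1/2$), the compactness criterion of Lemma \ref{compactnesscriterion} to upgrade weak to strong local convergence, and limit passage in the variational formulation over a window $[t_n,t_n+c]$ using Theorem \ref{convergenceprops} to identify the limit as an element of $\mathcal N$. The one step you flag as the main obstacle --- showing $\widehat\psi=0$ --- is closed in the paper not via the energy relation or the hidden trace regularity \eqref{trace}, but via the time-differentiated representation \eqref{formderiv1}, which expresses $\phi^{**}_t$ entirely in terms of $u_t$ at delayed times: testing against $\zeta\in C_0^\infty(K_\rho)$ and integrating by parts in space to unload the derivatives onto $\zeta$ yields $|(\phi^{**}_t,\zeta)_{L_2(K_\rho)}|\le C(\rho)\sup_{\tau\in[0,t^*]}\|u_t(t-\tau)\|_{0,\Omega}\|\zeta\|_{1,K_\rho}$, and $u_t(t)\to 0$ in $L_2(\Omega)$ then forces $\phi^{**}_t\rightharpoonup 0$, which combined with the strong local compactness already obtained identifies $\widehat\psi$ with $0$. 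Your instinct that everything degenerates through $u_t\to 0$ is exactly right; the missing ingredient is only that \eqref{formderiv1} makes this quantitative without any recourse to flow dissipation.
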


We present this theorem as an independent result, in the spirit of what is used in \cite{ryz,ryz2}.   However, a word of caution: the assumed bound (\ref{thisone}) will be valid only for ``certain" solutions to the plate problem, and certainly {\it not}  for every weak solution.  This is in striking contrast with \cite{ryz,ryz2} where  the smoothing properties of thermoelasticity provide the additional boundedness of  {\em all} plate solutions in higher topologies.  Here, we can obtain the global-in-time bounds in higher topologies (as in \eqref{thisone} below) by {\em considering smooth initial data} for the flow-plate system and {\em propagating} this regularity---see Section \ref{propagate} and \ref{highernorms000} below. Alternatively,  a key point of our proof below relies on a decomposition of the plate dynamics (see Section \ref{platedecomp}) in which one of the decomposed portions of the plate dynamics will satisfy a bound like that above in \eqref{thisone} (and the other will be exponentially stable). The non-decomposed plate dynamics will not necessarily have such a global-in-time bound. See Remark \ref{dada} below for further discussion of how Theorem \ref{highernorms0} and its proof are utilized later.

\begin{proof}[Proof of Theorem \ref{highernorms0}]
We first note that the solution to \eqref{flow1} (via classical scattering theory---see \cite{ryz,ryz2} and references therein) is stable, so $$(\phi^*(t_n),\phi^*_{t}(t_n)) \to (0,0)$$ in the local flow energy topology $\widetilde Y_{fl}$.

 We now utilize the assumption \eqref{thisone}, along with the bound in Lemma \ref{compact2}
 to bound, uniformly, the flow in higher norms. Specifically, we note that by assumption
 \begin{align}
 ~~u \in &~C^2([T^*,\infty);L_2(\Omega))\cap C^1([T^*,\infty);H_0^2(\Omega)) \cap C([T^*,\infty);H^4(\Omega)\cap H_0^2(\Omega))
 \end{align}
 (Below we utilize the notation $H^s(H^r(\Omega))$, meaning that $H^s$ regularity refers to the time variable. )

  This implies, in particular, that ~~$u\in H^2(L_2(\Omega)) \cap H^1(H^2(\Omega)) \cap L_2(H^4(\Omega)). $
 With~~  $u\in L_2(H^4(\Omega)) \cap H^1(H^2(\Omega)) $, via interpolation \cite{interp,LM}, we have
 \begin{equation}\label{1}
 u \in H^{\alpha_1}(H^{2+ 2 (1-\alpha_1) }(\Omega) ), ~~\alpha_1\in [0,1].
 \end{equation}
 
 Similarly, from $\displaystyle u_{t}\in L_2(H^2(\Omega)) \cap H^1(L_2(\Omega)) $
 we obtain
 \begin{equation}\label{2}
 u_{t}\in H^{\alpha_2}( H^{2(1-\alpha_2)}(\Omega)),~~ \alpha_2 \in [0,1]
 \end{equation}
 
 To apply inequality in Lemma \ref{compact2} with $s=0,~\beta > 0$ we will take $\alpha_1 = \beta  =\alpha_2 $
 \begin{align*}
4 -2 \alpha_1 \geq &~2+ \alpha_1;~~~ \text{so},~~~  2 \geq 3 \alpha_1~
\text{ and } ~
  2 -2 \alpha_2 \geq ~~~1 + \alpha_2 ;~~~ \text{so},~~~ 1 \geq 3 \alpha_2. \end{align*}
Hence, for
 $\eta \leq 1/3 $  we have:
 $$\|\nabla \phi^{**}(t)\|^2_{\eta, K_{\rho} }+\|\phi^{**}(t)\|^2_{\eta, K_{\rho} } \le \left\{\|u(t)\|^2_{H^{s+\eta}(t-t^*,t;H_0^{2+\eta}(\Omega))}+\|u_{t}(t)\|^2_{H^{s+\eta}(t-t^*,t;H_0^{1+\eta}(\Omega))} \right\} \le C_1,$$ where $C_1$ corresponds to the global-in-time bound on plate solutions in the assumption of the theorem. Then, applying the compactness criterion in Lemma \ref{compactnesscriterion} we have shown:
\begin{lemma}\label{limitpoint} For any sequence of times $t_n \to +\infty$, there is a subsequence (also denoted $t_n$) and a limit point $(\hat u, 0;\hat \phi, \hat \psi) \in Y$ such that:
$$(u(t_n),u_{t}(t_n);\phi(t_n)^{**},\phi_{t}^{**}(t_n)) \to (\hat u, 0; \hat \phi, \hat \psi)$$ in $\widetilde Y$ as $n \to \infty$. (Note:  we know, a priori, that $u_t \to 0$ in $L_2(\Omega)$---see Theorem \ref{convergenceprops}.)
\end{lemma}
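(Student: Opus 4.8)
The plan is to combine the boundedness afforded by hypothesis \eqref{thisone} with the flow-representation inequality of Lemma \ref{compact2} and the local-energy compactness criterion of Lemma \ref{compactnesscriterion}. First I would record the regularity of the plate trajectory that \eqref{thisone} encodes: since $\sup_{t\ge T^*}\{\|u(t)\|_4^2+\|\Delta u_t(t)\|_0^2+\|u_{tt}(t)\|_0^2\}\le C_1$, on any finite window $[t-t^*,t]\subset[T^*,\infty)$ we have $u\in L_2(H^4(\Omega))\cap H^1(H^2(\Omega))\cap H^2(L_2(\Omega))$ with norms bounded uniformly in $t$. Interpolating between these (as in \eqref{1}--\eqref{2}) produces the intermediate bounds $u\in H^{\eta}(H^{2+\eta}(\Omega))$ and $u_t\in H^{\eta}(H^{1+\eta}(\Omega))$ on each such window for $\eta\le 1/3$, again uniformly in $t$. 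These are precisely the norms appearing on the right-hand side of Lemma \ref{compact2} (with $s=0$), so that lemma yields
\[
\|\nabla\phi^{**}(t)\|^2_{\eta,K_\rho}+\|\phi^{**}_t(t)\|^2_{\eta,K_\rho}\le C(\rho)\,C_1
\]
for all $t>t^*$ and every $\rho>0$.

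Next I would fix an arbitrary sequence $t_n\to\infty$ and apply the compactness criterion Lemma \ref{compactnesscriterion} to the bounded sequence $\{(\phi^{**}(t_n),\phi^{**}_t(t_n))\}_n$ in $Y_{fl}$: the uniform bound just obtained in the $H^\eta$-localized norm on each $K_\rho$ supplies exactly the hypothesis of that lemma, so (after passing to a subsequence) $(\phi^{**}(t_n),\phi^{**}_t(t_n))$ converges in $\widetilde Y_{fl}$ to some limit $(\hat\phi,\hat\psi)$. Meanwhile, the plate component is handled by the already-established weak stability results: by Theorem \ref{convergenceprops}(3) we have $\|u_t(t)\|_0\to 0$, and by compactness of the plate attracting set $\mathcal U$ (Theorem \ref{th:main2}) together with Theorem \ref{convergenceprops}(2), along a further subsequence $u(t_n)\to\hat u$ in $H_0^2(\Omega)$. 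Assembling these, $(u(t_n),u_t(t_n);\phi^{**}(t_n),\phi^{**}_t(t_n))\to(\hat u,0;\hat\phi,\hat\psi)$ in $\widetilde Y$, which is the claimed statement. (One should note that the same diagonal-subsequence bookkeeping used to pass from ``for each $\rho$'' to ``in $\widetilde Y$'' is standard and poses no difficulty.)

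The main obstacle — and the only genuinely delicate point — is verifying that hypothesis \eqref{thisone} truly feeds Lemma \ref{compact2} on windows that slide with $t$: one needs the interpolation bounds \eqref{1}--\eqref{2} to hold with a \emph{constant independent of $t$}, which requires the supremum bound \eqref{thisone} rather than a merely local-in-time bound, and one must check the arithmetic $4-2\alpha_1\ge 2+\alpha_1$, $2-2\alpha_2\ge 1+\alpha_2$ forcing $\eta\le 1/3$ so that Lemma \ref{compact2}'s restriction $0<s+\eta<1/2$ is met. Everything downstream — the application of the compactness criterion and the extraction of the plate limit — is then routine given the results already cited. I would also remark that this lemma isolates the $\phi^{**}$ piece only; the $\phi^*$ piece was already shown to vanish in $\widetilde Y_{fl}$ via \eqref{starstable}, so combining the two gives convergence of the full flow $(\phi(t_n),\phi_t(t_n))$, but that combination is deferred to the continuation of the proof of Theorem \ref{highernorms0}.
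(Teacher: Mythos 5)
Your proposal is correct and follows essentially the same route as the paper: interpolation of the uniform bound \eqref{thisone} to obtain the $H^{\eta}(H^{2+\eta})$ and $H^{\eta}(H^{1+\eta})$ norms (with the same arithmetic forcing $\eta\le 1/3$), feeding these into Lemma \ref{compact2} to get a uniform local $H^{\eta}$ bound on $(\nabla\phi^{**},\phi^{**}_t)$, and then invoking the compactness criterion of Lemma \ref{compactnesscriterion} together with Theorem \ref{convergenceprops} for the plate component.
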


\noindent Now, to further characterize the flow limit point, we return to the formula in Theorem \ref{flowformula}.
\begin{lemma}\label{characterize}
The limit point $\widehat \psi \in  Y$ in Lemma \ref{limitpoint} above is identified as $0$ in $L_2(K_{\rho})$ for any $\rho>0$.
\end{lemma}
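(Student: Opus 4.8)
The plan is to use the explicit representation of $\phi^{**}_t$ from Theorem \ref{flowformula} to show that $\phi^{**}_t(t_n) \to 0$ in $L_2(K_{\rho})$, which identifies $\widehat \psi = 0$. The key observation is that, having established in Lemma \ref{limitpoint} that $u_t(t_n) \to 0$ in $L_2(\Omega)$ (Theorem \ref{convergenceprops}), we expect in fact that $u_t(t) \to 0$ not just at the sequence of times but in a sense strong enough to control the integrals in \eqref{formderiv1}. Indeed, the formula \eqref{formderiv1} expresses $\phi^{**}_t(\xb,t)$ (for $\xb \in K_{\rho}$) entirely in terms of $u_t$ (and its spatial derivatives) evaluated over the compact time window $[t-t^*, t]$ and over a compact set of spatial shifts determined by $\theta \in [0,2\pi]$, $s \in [z, t^*]$.

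First I would invoke the propagation-of-regularity bound \eqref{thisone}: since $\sup_{t \ge T^*}\{\|u(t)\|_4^2 + \|\Delta u_t(t)\|_0^2 + \|u_{tt}(t)\|_0^2\} \le C_1$, the family $\{u_t(t+\cdot)\}_{t \ge T^*}$, viewed as maps on $[-t^*,0]$, is uniformly bounded in, say, $C([-t^*,0];H_0^2(\Omega))$ with a uniform Lipschitz-in-time bound (from the $u_{tt}$ control). Combined with $\|u_t(t)\|_{L_2(\Omega)} \to 0$ and the uniform higher-norm bound, interpolation gives $\|u_t(t)\|_{H^{2-\delta}(\Omega)} \to 0$ as $t \to \infty$ for any $\delta > 0$; and uniform equicontinuity in $t$ then upgrades this to $\sup_{\tau \in [-t^*,0]}\|u_t(t+\tau)\|_{H^{2-\delta}(\Omega)} \to 0$ (this is the same mechanism as item (4) of Theorem \ref{convergenceprops}, applied to $u_t$ in place of $u$). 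Since each term in \eqref{formderiv1} is a finite average/integral over the bounded region $\{(\theta,s): \theta \in [0,2\pi], s \in [z,t^*]\}$ of translates of $u_t$, $\partial_x u_t$, and $M_\theta u_t$ — all controlled by $\|u_t(t+\tau)\|_{H^{2}}$ uniformly and tending to $0$ in $\|u_t(t+\tau)\|_{H^{1+\eta}}$ — one concludes $\|\phi^{**}_t(t)\|_{L_2(K_{\rho})} \to 0$ as $t \to \infty$. Then $\widehat \psi$, being the $\widetilde Y_{fl}$-limit of $\phi^{**}_t(t_n)$, must equal $0$ in $L_2(K_\rho)$ for every $\rho > 0$.

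An alternative, perhaps cleaner, route avoids pointwise formulas: apply Lemma \ref{compact2} directly with a small $\eta \in (0,1/3]$ to bound $\|\phi^{**}_t(t)\|^2_{\eta, K_\rho}$ by $C(\rho)\{\|u(t)\|^2_{H^{\eta}(t-t^*,t;H_0^{2+\eta})} + \|u_t(t)\|^2_{H^{\eta}(t-t^*,t;H_0^{1+\eta})}\}$, and observe that the second term tends to $0$ (since $u_t(t+\cdot) \to 0$ in $C([-t^*,0];H^{1+\eta})$ by the interpolation/equicontinuity argument above, hence in $H^\eta(-t^*,0;H^{1+\eta})$), while — and this is the subtle point — the \emph{first} term does not obviously vanish. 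This is why the pointwise formula \eqref{formderiv1} is preferable: in \eqref{formderiv1} the aeroelastic potential's time derivative depends only on $u_t$ and its spatial derivatives, \emph{not} on $u$ itself, so every contribution is killed by the decay of $u_t$ in higher plate norms. I would therefore carry out the first route.

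The main obstacle I anticipate is rigorously upgrading $\|u_t(t)\|_{L_2(\Omega)} \to 0$ to decay of $u_t$ in the stronger norm $H^{1+\eta}(\Omega)$ \emph{uniformly over the backward time window} $[t-t^*,t]$ — this is exactly the kind of statement in item (4) of Theorem \ref{convergenceprops} but for the velocity rather than the displacement, and it hinges on the uniform higher-norm bound \eqref{thisone} (which provides both the boundedness needed for interpolation and the $u_{tt}$-control needed for time-equicontinuity). One must be careful that \eqref{thisone} is assumed only for $t \ge T^*$, so the backward windows are taken with $t - t^* \ge T^*$, i.e., $t \ge T^* + t^*$, which is harmless for a limit statement. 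Justifying the distributional interpretation of \eqref{formderiv1} for merely finite-energy data (as flagged in Theorem \ref{flowformula}) is a technical point, but under \eqref{thisone} the plate solution is regular enough on $[T^*,\infty)$ that \eqref{formderiv1} holds classically, so this is not a genuine difficulty here.
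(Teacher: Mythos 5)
Your proposal is correct, but the identification step is carried out differently from the paper. The paper's proof does \emph{not} upgrade the decay of $u_t$ to higher norms: it pairs $\phi^{**}_t$ from \eqref{formderiv1} with a test function $\zeta \in C_0^{\infty}(K_{\rho})$ and integrates by parts in space, moving $\partial_x$ (third term) and $M_{\theta}$ (fourth term) onto $\zeta$, which yields $|(\phi^{**}_{t},\zeta)_{L_2(K_{\rho})}| \le C(\rho)\sup_{\tau \in [0,t^*]}\|u_{t}(t-\tau)\|_{0,\Omega}\|\zeta\|_{1,K_{\rho}}$. Thus only the $L_2(\Omega)$ decay of $u_t$ (Theorem \ref{convergenceprops}) is needed to conclude $\phi^{**}_t(t) \rightharpoonup 0$, and the \emph{weak} limit then identifies the \emph{strong} limit $\widehat\psi$ already furnished by the compactness in Lemma \ref{limitpoint}. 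You instead interpolate the $L_2$ decay of $u_t$ against the uniform $H^2$ bound from \eqref{thisone} to get $\|u_t(t)\|_{H^{1+\eta}} \to 0$, and conclude strong convergence $\|\phi^{**}_t(t)\|_{L_2(K_\rho)} \to 0$ directly from \eqref{formderiv1}. Both arguments are valid under the hypotheses of Theorem \ref{highernorms0}; yours leans harder on the assumed bound \eqref{thisone} (boundedness of $\|\Delta u_t\|$ is essential for your interpolation, whereas the paper needs the regularity only to justify the pointwise formula), while the paper's duality trick is softer and decouples the identification of the limit from the higher-norm decay. Two minor points: your equicontinuity worry is unnecessary, since $\|u_t(t)\|_{L_2(\Omega)} \to 0$ holds for the full limit $t \to \infty$ (item (3) of Theorem \ref{convergenceprops}), so the supremum over the sliding window $[t-t^*,t]$ tends to zero automatically; and your diagnosis of why the Lemma \ref{compact2} route cannot yield convergence to zero (the displacement term does not vanish) is exactly right --- that bound is used only for compactness in Lemma \ref{limitpoint}.
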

\begin{proof}[Proof of Lemma \ref{characterize}] The flow solution $(\phi^*(t),\phi_{t}^*(t))$ tends to zero (again, we emphasize that this convergence is in the local flow energy sense); hence, we have that $\phi(t_n) \to \hat \phi$. To identify $\hat \psi$ with zero, we note that (by assumption) $u_{t}(t) \in H^1(\Omega)$. This allows to differentiate the flow formula in Theorem \ref{flowformula} in time (as was done in \cite{springer} and \cite{ryz2}) to arrive at that in \eqref{formderiv1}.

\noindent For a fixed $\rho >0$, we multiply  $\phi_{t}^{**}(\xb,t)$ in \eqref{formderiv1} by by a smooth function $\zeta \in C_0^{\infty}(K_{\rho})$ and integrate by parts in space---in \eqref{formderiv1} we move $\partial_x$ onto $\zeta$ in the third term and the partials $\partial_x,~\partial_y$ from $M_{\theta}$ onto $\zeta$ in term four.
This results in the bound
\begin{equation}\label{whatwe}
|(\phi^{**}_{t},\zeta)_{L_2(K_{\rho})}| \le C(\rho)\sup_{\tau \in [0,t^*]}\|u_{t}(t-\tau)\|_{0,\Omega}\|\zeta\|_{1,K_{\rho}}
\end{equation}

From this point, we utilize the fact that $u_{t}(t) \to 0$ in $L_2(\Omega)$ when $t \rightarrow \infty $  (Theorem \ref{convergenceprops}), and hence $(\phi_{t}^{**}(t),\zeta)_{K_{\rho}} \to 0$ and $\phi^*_{t} \to 0$ in $L_2(K_{\rho})$ for any fixed $\rho>0$. This gives that $\phi_{t}(t) \rightharpoonup 0$ in $L_2(\Omega)$, and we identify the strong limit point $\widehat \psi$ with $0$ in $L_2(\Omega)$ so that  $(u(t_n),u_{t}(t_n);\phi(t_n)^{**},\phi_{t}^{**}(t_n)) \to (\hat u, 0; \hat \phi, 0)$ in $\widetilde Y$ as $n \to \infty$. . \end{proof}

Now we show that the limit points obtained above satisfy the static problem in a weak sense.
\begin{lemma}\label{staticsols}
The pair $(\hat u, \hat \phi)$ as in Lemma \ref{limitpoint} and Lemma \ref{characterize} satisfies the stationary problem \eqref{static} in the variational sense.
\end{lemma}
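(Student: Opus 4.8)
The plan is to pass to the limit in the variational (weak) formulation of the evolution system \eqref{flowplate} along the subsequence $t_{n_k}$ produced in Lemma \ref{limitpoint} and Lemma \ref{characterize}, and identify the limiting equations with \eqref{static}. Fix test functions $v \in C_0^{\infty}(\Omega)$ and $\psi \in C_0^{\infty}(\realsthree_+)$. Along the subsequence we have the strong convergences $u(t_{n_k}) \to \hat u$ in $H^2(\Omega)$, $u_t(t_{n_k}) \to 0$ in $L_2(\Omega)$ (Theorem \ref{convergenceprops}), $\phi(t_{n_k}) \to \hat\phi$ in $H^1(K_\rho)$, and $\phi_t(t_{n_k}) \to 0$ in $L_2(K_\rho)$ for every $\rho$, together with the global-in-time bound \eqref{thisone}, which in particular gives $u_{tt}(t_{n_k}) \to 0$ weakly in $L_2(\Omega)$ (after possibly passing to a further subsequence) and $u(t_{n_k}) \to \hat u$ in $H^4(\Omega)$ weakly, hence $\Delta^2 u(t_{n_k}) \to \Delta^2 \hat u$ in the sense of distributions. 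The nonlinear term is handled by the strong $H^2(\Omega)$ convergence: for the Berger nonlinearity $f_B(u) = [b-\|\nabla u\|^2]\Delta u$, the map $u \mapsto f_B(u)$ is continuous from $H^2_0(\Omega)$ into $L_2(\Omega)$ (indeed into $H^{-\epsilon}$, but $L_2$ suffices), so $f(u(t_{n_k})) \to f(\hat u)$.

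Next I would treat the coupling terms. In the plate equation the forcing is $r_\Omega\, tr[(\partial_t + U\partial_x)\phi]$. Using the decomposition $\phi = \phi^* + \phi^{**}$, the $\phi^*$ contribution vanishes in the limit by \eqref{starstable} and the fact that $(\partial_t + U\partial_x) tr[\phi^*] \equiv 0$ for $t \ge t_\rho$; for the $\phi^{**}$ piece I would use the pointwise formula from Theorem \ref{flowformula} (and its time-derivative form \eqref{formderiv1}) together with $u_t(t-\tau) \to 0$ in $L_2(\Omega)$ uniformly for $\tau \in [0,t^*]$ (which follows from Theorem \ref{convergenceprops}(4), the uniform-on-compact-intervals convergence of the plate velocities), so that the dynamic pressure converges weakly to $U\, r_\Omega\, tr[\partial_x \hat\phi]$ — matching the right-hand side of the first equation in \eqref{static}. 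For the flow equation, I would test $(\partial_t + U\partial_x)^2\phi = \Delta\phi$ against $\psi \in C_0^{\infty}(K_\rho)$, integrate by parts in time over a window $[t_{n_k}, t_{n_k}+T]$ (or differentiate in time and localize), and use that $\phi_t(t_{n_k}) \to 0$ in $L_2(K_\rho)$ and $\phi(t_{n_k}) \to \hat\phi$ in $H^1(K_\rho)$, which kills the $\partial_t$-terms and leaves $\Delta\hat\phi - U^2\partial_x^2\hat\phi = 0$ on $K_\rho$; since $\rho$ is arbitrary this holds on all of $\realsthree_+$. Finally, the Neumann trace condition $\partial_z\phi = [(\partial_t + U\partial_x)u]_{\text{ext}}$ passes to the limit $\partial_z\hat\phi = [U\partial_x\hat u]_{\text{ext}}$ using again $u_t(t_{n_k}) \to 0$ in $L_2(\Omega)$ and $u(t_{n_k}) \to \hat u$ in $H^2(\Omega)$, interpreted weakly against traces of test functions via the hidden trace regularity \eqref{trace}; the clamped conditions $\hat u = \partial_\nu \hat u = 0$ are preserved since $H^2_0(\Omega)$ is closed.

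The main obstacle is making rigorous the passage to the limit in the \emph{time-derivative} structure of both evolution equations: the plate equation contains $u_{tt}$ and the flow equation contains $\phi_{tt}$ and $\phi_{tx}$, and pointwise-in-$t$ evaluation at $t = t_{n_k}$ of these second-order time derivatives is only justified under the regularity \eqref{thisone} for the plate and requires care for the flow. For the plate, \eqref{thisone} gives exactly the uniform bound on $u_{tt}$ that legitimizes extracting a weak-$L_2(\Omega)$ limit of $u_{tt}(t_{n_k})$; one then argues that this limit must be $0$ — either by noting that along the convergent subsequence the evolution equation forces $u_{tt}(t_{n_k})$ to converge to $-\Delta^2\hat u - f(\hat u) + p_0 + U r_\Omega tr[\partial_x\hat\phi]$, which is then \emph{defined} to be zero, or more carefully by a parabolic-type/averaging argument using $u_t \in L_2(0,\infty;L_2(\Omega))$ (Corollary \ref{dissint}) to see that $u_{tt}$ cannot have a persistent nonzero weak limit. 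For the flow, rather than evaluating $\phi_{tt}$ pointwise I would instead work with the integrated (weak) form over a short time window and pass the window to infinity, using the local energy convergence of $(\phi(t_{n_k}),\phi_t(t_{n_k}))$ and the Hadamard/continuous-dependence estimate on $[t_{n_k},t_{n_k}+T]$ to control the trajectory on the whole window; this converts the problematic second-order-in-time statement into a statement about the limiting \emph{stationary} profile, which is what \eqref{static} asserts. Assembling these pieces shows $(\hat u,\hat\phi)$ solves \eqref{static} variationally, completing the proof and, combined with the preceding lemmas, establishing Theorem \ref{highernorms0}.
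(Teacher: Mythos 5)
Your overall strategy---pass to the limit in the spatial variational form of \eqref{flowplate} along $t_{n_k}$ and identify the limit with \eqref{static}---is the same as the paper's, and your treatment of the flow equation (integrate the weak form over a window $[t_{n_k},t_{n_k}+c]$ so that the second-order time derivatives collapse to boundary terms killed by $\phi_t(t_{n_k})\to 0$ in $L_2(K_\rho)$) is exactly what the paper does. The genuine gap is in your treatment of $u_{tt}$. You evaluate the plate equation pointwise at $t=t_{n_k}$ and try to identify the weak $L_2$ limit of $u_{tt}(t_{n_k})$ as zero. Your first justification is circular: saying the equation ``forces'' $u_{tt}(t_{n_k})$ to converge to $-\Delta^2\hat u-f(\hat u)+p_0+Ur_\Omega tr[\partial_x\hat\phi]$, ``which is then defined to be zero,'' assumes the conclusion that $\hat u$ solves the static equation. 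Your second justification (a ``parabolic-type/averaging argument'') is not carried out, and the underlying pointwise claim is false in general: knowing $g(t)=\langle u_t(t),\eta\rangle\to 0$ and $g'=\langle u_{tt},\eta\rangle$ bounded does \emph{not} imply $g'(t_{n_k})\to 0$ along a prescribed sequence. The fix is the one you already use for the flow and which the paper applies uniformly to \emph{both} equations: integrate the plate relation over $[t_{n_k},t_{n_k}+c]$, so that $\int_{t_{n_k}}^{t_{n_k}+c}\langle u_{tt},\eta\rangle\,d\tau=\langle u_t(t_{n_k}+c)-u_t(t_{n_k}),\eta\rangle\to 0$ directly from Theorem \ref{convergenceprops}(3); the remaining terms converge using Theorem \ref{convergenceprops}(4) (which gives convergence of $u$ uniformly over the window, not just at $t_{n_k}$) and the locally Lipschitz character of $f_B$.

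A secondary remark: you invoke considerably more machinery than is needed for this lemma---the bound \eqref{thisone}, weak $H^4(\Omega)$ convergence of $u(t_{n_k})$, the $\phi^*/\phi^{**}$ decomposition for the dynamic pressure, and the hidden trace regularity \eqref{trace}. In the paper these ingredients are used elsewhere in the proof of Theorem \ref{highernorms0} (to obtain the \emph{strong} local compactness of the flow and to identify $\hat\psi=0$), but the identification of $(\hat u,\hat\phi)$ as a variational solution of \eqref{static} only requires the window-integrated weak formulation together with the convergences already established in Lemmas \ref{limitpoint} and \ref{characterize} and Theorem \ref{convergenceprops}. Streamlining to that argument both closes the gap and shortens the proof.
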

\begin{proof}[Proof of Lemma \ref{staticsols}] We begin by multiplying the system \eqref{flowplate} by $\eta \in C_0^{\infty}(\Omega)$ (plate equation) and $\psi \in C_0^{\infty}(\realsthree_+)$ (flow equation) and integrate over the respective domains. This yields
\begin{equation}\begin{cases}
\lb u_{tt},\eta\rb +\lb \Delta u, \Delta \eta\rb +k\lb u_t,\eta\rb +\lb f(u),\eta\rb  = \lb p_0,\eta\rb -\lb tr[\phi_t+U\phi_x],\eta\rb  \\
\left(\phi_{tt},\psi\right)+U(\phi_{tx},\psi)+U(\phi_{xt},\psi)-U^2(\phi_{x},\psi_x)=-(\nabla \phi, \nabla \psi)+\lb u_t+Uu_x, tr[\psi]\rb .
\end{cases}
\end{equation}

Now, we consider the above relations evaluated at the points $t_n$ (identified as a subsequence for which the various convergences above hold), and integrating in the time variable from $t_n$ to $t_n+c$. Limit passage on the linear terms is clear, owing to the main convergence properties for the plate component in the Theorem \ref{convergenceprops}. The locally Lipschitz nature the nonlinearities of physical type allows us to pass with the limit on the nonlinear term (this is by now standard, \cite{springer}). We then arrive at the following static relations:
\begin{equation}\begin{cases}
\lb \Delta \hat u, \Delta w\rb+\lb f(u),w\rb = \lb p_0,w\rb+U\lb tr[\hat \phi],w_x\rb \\
(\nabla \hat \phi, \nabla \psi)-U^2(\hat \phi_{x},\psi_x)=U\lb \hat u_x, tr[\psi]\rb.
\end{cases}
\end{equation}
This implies that our limiting point $(\hat u,0; \hat \phi,0)$ of the sequence \newline $\left(u(t_n),u_t(t_n);\phi(t_n),\phi_t(t_n)\right)$ is in fact a solution (in the weak/variational sense) to the static equations, i.e., it is a {\em stationary solution} to the flow-plate system \eqref{flowplate}. We have thus shown that any trajectory contains a sequence of times $\{ t_n\}$ such that, along these times, we observe convergence to a solution of the stationary problem. \end{proof}
The above lemmata complete the proof of Theorem \ref{highernorms0}.

\end{proof}
\begin{remark}\label{dada}
In the proof above we have shown how a bound on the plate dynamics in higher topologies results in the desired convergence to equilibrium result for the full dynamics. We pause to point out that this will be used in two different ways: below we will show that with {\em smooth initial data} for \eqref{flowplate}, via a propagation of regularity result, such a bound can be obtained. Alternatively, in the proof of our main result (in Section \ref{flowdecomp}), we will use the proof of Theorem \ref{highernorms0} for decomposed systems \eqref{wfull} and \eqref{zfull}. We note that the result of Theorem \ref{highernorms0} is not strictly dependent on the structure of the plate equation, but rather the linear nature of the coupling via the Neumann condition for the flow \eqref{flowformula}. Hence, if we add change slightly the structure of the plate equation,  the result of Theorem \ref{highernorms0} will be unchanged, so long as (i) the modified plate dynamics still converge in the sense of Theorem \ref{convergenceprops}, and (ii) the requisite bound in Theorem \ref{highernorms0} is obtained. Specifically, in what follows we will consider a general system: 
\begin{equation}\label{wfull*}\begin{cases}
u_{tt} + (k+1) u_t + \Delta^2 u  = p_0-U\partial_x u-q^u+F(t,u)~~ &\text{ in } \Omega\\
u=\Dn u=0~~\hskip1cm\text{ on } \Gamma\\
u(t_0) = u_0, ~~u_t(t_0) = u_1,~~u^{t_0}=\eta,
\end{cases}
\end{equation}
where the data will be ``smooth" (as in the proof of Theorem \ref{dichotoo1}), or possibly zero (as in the proof of Theorem \ref{dichotoo2}). In both of these cases the nonlinearity $F(t,u)$ will be broader than $f=f(u)$, depending on a {\em given function}, owing to the decomposition of the dynamics considered below in \eqref{exp} and \eqref{smooth}. 
\end{remark}

\subsection{Propagation of Regularity}\label{propagate}
In order to utilize the key technical supporting result in Theorem \ref{highernorms0}, we must show a propagation of regularity result for the dynamics. Specifically, we must show that for ``smooth" initial data, we have ``smooth" time dynamics; this is Theorem \ref{highernorms}. Such a result was correctly stated in the \cite[(2014)]{conequil1}, however the proof was unclear. This has been updated in a subsequent version \cite[(2015)]{conequil1} of the same reference. We include a proof here in the case of Berger dynamics (with the clear proof for von Karman dynamics in \cite[(2015)]{conequil1}). 

\begin{remark}\label{propagate*} To show infinite-time propagation of the plate dynamics, we rely on the full flow-plate dynamics to achieve {\em propagation of the initial regularity} on any interval $[0,T^*]$. Once this is achieved, we may work on the reduced delay plate (after sufficient time has passed) and utilize sharp bounds to obtain regularity of plate trajectories on the infinite horizon (propagation on $[T^*,\infty)$).
This part of the argument depends on  (i) uniform exponential decay for the nonlinear plate equation with large static and viscous damping \cite{lagnese}, and (ii) specific properties of nonlinearity; (in the von Karman case one applies  the {\it sharp} regularity of Airy stress function \cite[p. 44]{springer}. \end{remark}

\begin{theorem}\label{highernorms}  Consider the dynamics $(S_t,Y)$ corresponding to \eqref{flowplate}. Consider initial data $y_m \in \cD(\bT)$ such that $y^m \in B_R(Y)$ and take $k>0$. Then we have that for the trajectory $S_t(y^m) = (u^m(t),u^m_{t}(t);\phi^m(t),\phi^m_{t}(t))$ $$(u^m(\cdot),\phi^m(\cdot)) \in  C^1\left(0,T;H_0^2(\Omega)\times H^1(\realsthree_+)\right),$$ for any $T$,
along with the bound
\begin{equation}\label{keybound0}
\sup_{t\in[0,T)}\left\{\|\Delta u^m_{t}\|^2_{\Omega}+\|u^m_{tt}\|^2_{\Omega} \right\}  \leq C_{m,T} < \infty.
\end{equation}
Additionally, if we assume the flow initial data are localized and consider the delayed plate trajectory (via the reduction result Theorem \ref{rewrite}), we will have:
\begin{equation}\label{keybound1}
\sup_{t\in[0,\infty)}\left\{\|\Delta u^m_{t}\|^2_{\Omega}+\|u^m_{tt}\|^2_{\Omega}  \right\} \le C_1< \infty.
\end{equation}
Then, by the boundedness in time of each of the terms in the \eqref{reducedplate}, we have
$$\sup_{t \in [0,\infty)} \|\Delta^2u^m(t)\|_0 \le C_2,$$ where we critically used the previous bound in \eqref{keybound1}. In particular, this implies that, taking into account the clamped boundary conditions: \begin{equation}\label{keybound2}
\sup_{t \in [0,\infty)}\|u^m(t)\|_{4,\Omega} \le C_3.
\end{equation}
Each of the terms $C_i$ above depends on the intrinsic parameters in the problem (including the respective loading, which depends on the nonlinearity being considered).
\end{theorem}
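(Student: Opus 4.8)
The plan is to prove the three displayed bounds \eqref{keybound0}, \eqref{keybound1}, \eqref{keybound2} in sequence, beginning with the finite-horizon regularity \eqref{keybound0} for the full coupled system, then upgrading to the infinite horizon \eqref{keybound1} on the reduced delay plate, and finally reading off \eqref{keybound2} algebraically from the equation.

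\textbf{Step 1: Finite-time propagation, \eqref{keybound0}.} Since $y^m \in \mathscr D(\bT)$, Theorem~\ref{nonlinearsolution} gives that $S_t(y^m)$ is a strong solution and $(S_t,Y_s)$ is a dynamical system, so on any $[0,T]$ the trajectory stays in $\mathscr D(\bT) \subset (H^4\cap H_0^2)(\Omega)\times H_0^2(\Omega)\times H^2(\realsthree_+)\times H^1(\realsthree_+)$ by \eqref{domainprop}. To obtain the quantitative bound I would differentiate the flow-plate system \eqref{flowplate} formally in $t$ and set $v = u_t$, $\psi = \phi_t$; the pair $(v,\psi)$ solves the \emph{same} linear flow-plate system but with the nonlinearity replaced by its linearization $f'(u)v$ (in the Berger case $f_B'(u)v = [b-\|\nabla u\|^2]\Delta v - 2(\nabla u,\nabla v)\Delta u$, which is a bounded perturbation once $\|\Delta u(t)\|$ is controlled by Lemma~\ref{globalbound}) and with compatible initial data computed from $y^m$ using the equation. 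Applying the energy identity for this differentiated system — the analogue of \eqref{eident}, valid because the linear flow dynamics are conservative and $U<1$ makes the flow energy equivalent to the $Y_{fl}$ norm — together with Gr\"onwall in $t$ (absorbing the lower-order plate terms $f'(u)v$ via the already-established $Y$-bound from Lemma~\ref{globalbound}), yields
\begin{equation*}
\sup_{t\in[0,T)}\left\{\|\Delta u^m_t\|_\Omega^2 + \|u^m_{tt}\|_\Omega^2 + \|\nabla\phi^m_t\|_{\realsthree_+}^2 + \|\phi^m_{tt}\|_{\realsthree_+}^2\right\} \le C_{m,T}.
\end{equation*}
The constant depends on $\|y^m\|_{\mathscr D(\bT)}$ and grows in $T$; that is acceptable for \eqref{keybound0}.

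\textbf{Step 2: Infinite-horizon bound on the delay plate, \eqref{keybound1}.} Here the flow localization hypothesis enters. By Theorem~\ref{rewrite}, for $t \ge t^\#$ the plate component satisfies the reduced delay equation \eqref{reducedplate} with the explicit delay term $q^u$ of \eqref{potential}. Differentiating \eqref{reducedplate} in time, $v=u_t$ satisfies a plate equation with damping coefficient $k+1>0$, linearized Berger nonlinearity, the non-dissipative term $-Uv_x$, and the forcing $-\partial_t q^u(t)$ given explicitly by \eqref{qderiv}. The point is that each term of \eqref{qderiv} is an integral over $s\in[0,t^*]$ (a fixed finite delay window) of second-order spatial derivatives of $u$, $u_x$, $u_y$ evaluated at times $t-s$; these are all controlled by $\sup_{\tau\in[t-t^*,t]}\|u(\tau)\|_{H^2(\Omega)}$, hence by the absorbing-set bound of Lemma~\ref{indeppara}, which is \emph{uniform in the damping parameter} for $k \ge k_*$. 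Running the energy method for the differentiated delay plate with the \emph{large} damping $k+1$ (choosing $k$ large enough — depending only on $\mathscr B$, not on initial data, per Lemma~\ref{indeppara} and the discussion in Section~\ref{prevlit}) gives uniform-in-time exponential decay of the homogeneous part and a bounded-input bounded-state estimate for the $-Uv_x$ and $-\partial_t q^u$ forcing terms, producing
\begin{equation*}
\sup_{t\in[0,\infty)}\left\{\|\Delta u^m_t\|_\Omega^2 + \|u^m_{tt}\|_\Omega^2\right\} \le C_1,
\end{equation*}
where $C_1$ depends on $p_0$, $b$, $\Omega$, $U$, $\rho_0$ but not on $m$. (On $[0,t^\#]$ one uses Step~1; past $t^\#$ one uses the delay estimate, and $t^\#$ depends only on $\rho_0,U,\Omega$.)

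\textbf{Step 3: Elliptic regularity, \eqref{keybound2}.} Rewrite \eqref{reducedplate} as $\Delta^2 u = p_0 - u_{tt} - (k+1)u_t - f(u) - U\partial_x u - q^u$. Every term on the right is bounded in $L_2(\Omega)$ uniformly in $t$: $u_{tt}$ and $u_t$ by \eqref{keybound1}; $f_B(u) = [b-\|\nabla u\|^2]\Delta u$ and $U\partial_x u$ by Lemma~\ref{globalbound}; $q^u$ by Lemma~\ref{indeppara} as above; $p_0$ by hypothesis. Hence $\sup_t\|\Delta^2 u^m(t)\|_0 \le C_2$, and by elliptic regularity for the biharmonic operator with clamped boundary conditions on the smooth domain $\Omega$, $\sup_t\|u^m(t)\|_{4,\Omega}\le C_3$.

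\textbf{Main obstacle.} The delicate point is Step~2: obtaining the \emph{infinite-horizon, data-independent} bound rather than the growing-in-$T$ bound of Step~1. This requires (a) that the large viscous damping $k+1$ genuinely stabilizes the linearized delay plate despite the destabilizing $-Uv_x$ term and the delay forcing $-\partial_t q^u$ — this is exactly the known tension between delay terms and damping size noted in the introduction (references \cite{pignotti,pignotti1,incase}), and it is why $k$ must exceed a threshold — and (b) that the forcing $\partial_t q^u$ really is controlled uniformly by the absorbing ball, which hinges on the explicit delay-integral structure of \eqref{qderiv} and the finiteness of the delay window $t^*$. In the Berger case the nonlinearity cooperates because $f_B$ is cubic with a benign structure; in the von Karman case one instead invokes sharp Airy-stress regularity, which is why the clean argument is nonlinearity-specific, as the remark preceding the theorem indicates.
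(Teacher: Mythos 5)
Your Steps 1 and 3 track the paper's argument: finite-time propagation via the time-differentiated energy identity plus Gr\"onwall, and the final $H^4$ bound by reading the biharmonic equation elliptically. The gap is in Step 2, in two places. First, your claim that the forcing $\partial_t q^u$ is ``controlled by $\sup_{\tau\in[t-t^*,t]}\|u(\tau)\|_{H^2(\Omega)}$, hence by the absorbing-set bound'' is not correct: the last two terms of \eqref{qderiv} involve $M_\theta^2 u_x$ and $M_\theta^2 u_y$, i.e.\ \emph{third}-order spatial derivatives of $u$, and the paper's estimate \eqref{qests*} accordingly reads $\|q^u_t(t)\|\le Ct^*\big[\|\Delta u(t)\|+\|\Delta u(t-t^*)\|+\int_{t-t^*}^t\|u(s)\|_3\,ds\big]$. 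The $H^3$ term is not bounded by the finite-energy absorbing ball; the paper must bootstrap it through the unknown higher energy by interpolating $\|u\|_3^2\le\epsilon\|u\|_4^2+C_\epsilon\|u\|_2^2$ and then bounding $\|u\|_4$ from the elliptic equation in terms of $\|(\overline u,\overline u_t)\|_{Y_{pl}}$, so that these contributions enter with a small constant and are absorbed into the Lyapunov decay. Your ``bounded-input bounded-state'' picture, in which the delay forcing is an exogenous input bounded by finite-energy quantities, does not hold.

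Second, and more fundamentally, your mechanism for the infinite horizon --- exponential decay of the homogeneous part obtained by taking $k$ large --- proves a weaker statement than the theorem, which asserts \eqref{keybound1} for \emph{any} $k>0$. The paper explicitly flags your route (variation of parameters with large viscous damping, cf.\ \eqref{varpar0}) as the ``straightforward'' one and deliberately avoids it: the stated point of its STEP 2 is that the propagation of regularity is independent of the damping size. The paper's actual mechanism is different: the nonlinear term in the differentiated equation is split as an exact time derivative $\tfrac{d}{dt}Q_1$ plus a remainder $P_1=-3(\Delta u^m,u^m_t)\|\nabla u^m_t\|^2$; the exact derivative is moved into the Lyapunov functional $W$, and the remainder (together with the other problematic couplings) is estimated by $\|u^m_t\|^2E(t)$. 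Gr\"onwall then closes because the dissipation integral $\int_0^\infty\|u^m_t\|^2\,dt<\infty$ (Corollary \ref{dissint}) is finite for \emph{every} $k>0$. To repair Step 2 you must either accept a size restriction on $k$ that the theorem does not impose, or replace the exponential-decay argument by the dissipation-integral Gr\"onwall argument together with the $H^3$ bootstrap described above.
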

\begin{remark}
The superscript $m$ in the above theorem serves to indicate that ``smooth" data (in the sense above) will be used to approximate given finite energy data. 
\end{remark}
\begin{remark}
The parameter $\beta \ge 0$ is inert with respect to this result. That is, Theorem \ref{highernorms} holds for \eqref{flowplate} ($\beta =0$) and \eqref{withbeta} ($\beta>0$).
\end{remark}
For this proof we consider no static damping ($\beta=0$ in \eqref{withbeta}) and any $k>0$ fixed. Additionally, this propagation result holds for both the Berger and von Karman nonlinearities \cite{conequil1}.

The proof of this Theorem \ref{highernorms} is based on two steps labeled STEP 1 and STEP 2. We first prove finite time propagation of regularity (STEP 1). Following this, we shall propagate regularity
for all times uniformly (STEP 2) {\em without making assumptions on the damping coefficients}. This is motivated by the fact that for the infinite time propagation we must use the delay representation for plate solutions, which is valid only for sufficiently large times. Thus, the regular initial condition required to do so is obtained via the finite time propagation of regularity for the full flow-plate model.

\begin{proof}[Proof of Theorem \ref{highernorms}] ~~{\phantom{aaaa}}
\subsubsection{STEP 1}
To prove \eqref{keybound1} above (which then implies \eqref{keybound2}) we will consider the time differentiated version of the entire flow-plate dynamics in \eqref{flowplate}, which is permissible in $\cD(\bT)$. We label $\overline{u}=u^m_{t}$ and $\overline{\phi}=\phi^m_{t}$, multiply the plate equation by $\overline u_t$ and the flow equation by $\overline \phi_t$, and integrate in time. We consider the `strong' energies
\begin{align}
E^s_{pl}(t)=& ~\dfrac{1}{2} [\|\Delta \ou(t)\|^2+\|\ou_t(t)\|^2],\notag\\
E^s_{fl}(t)=& \dfrac{1}{2}[\|\nabla \of(t)\|^2_{\realsthree_+}-U^2\|\partial_x \of(t)\|_{\realsthree_+}^2+\|\of_t(t)\|^2_{\realsthree_+}],\notag\\
E^s_{int}(t)=&~2U\lb \ou_x(t),\of(t)\rb_{\Omega}\notag\\
\cE^s(t) = &~E^s_{pl}(t)+E^s_{fl}(t)+E^s_{int}(t),~~
\bE^s(t) =  E^s_{pl}(t)+E^s_{fl}(t)
\end{align}
From the standard analysis (done at the energy level) we have that \cite{springer,webster}
\begin{align}
|E^s_{int}(t)| \le \delta \|\nabla \phi^m_{t}(t)\|^2&+\frac{C}{\delta}\|u^m_{xt}(t)\|_0^2 \le \delta \|\nabla \phi^m_{t}(t)\|^2+\epsilon \|\Delta u^m_{t}(t)\|^2+C_{\epsilon,\delta}\|u^m_{t}(t)\|^2
\end{align}
Using the boundedness of trajectories at the energy level $\sup_{t>0}\|u^m_{t}(t)\|_0 \le C_R$, we then have that for all $\epsilon>0$
\begin{equation}
|E^s_{int}(t)| \le \epsilon\bE^s(t)+K_{\epsilon},
\end{equation}
and this results in the existence of constants $c,C,K>0$ such that \begin{equation}\label{equiv}
c\bE^s(t)-K \le \cE^s(t) \le C\bE^s(t)+K, ~~\forall t>0.
\end{equation}
Then the energy balance for the time-differentiated equations is:
\begin{equation}\label{strongbalance}
\cE^s(t) +k\int_0^t \|\ou_t(\tau)\|^2 d\tau = \cE^s(0)+B(t),
\end{equation}
where $B(t)$ represents the contribution of the nonlinearity in the time-differentiated dynamics:
\begin{equation}\label{bee}
B(t) \equiv \int_0^t \left\lb \dfrac{d}{dt}(f(u^m)),\ou_t\right\rb_\Omega d\tau
\end{equation}
Using \eqref{equiv}, we have
\begin{equation}
\bE^s(t) \le C\bE^s(0)+K+|B(t)|
\end{equation}
We note immediately that if we can bound $B(t)$, we will obtain the estimate in \eqref{keybound1}. Continuing, we use the key decomposition 
 for $f(u)=[b - ||\nabla u||^2]\Delta u$:
\begin{align}
\left(\dfrac{d}{dt} \left([||\nabla u^m||^2-b]\Delta u^m\right),u^m_{tt}\right)_{\Omega}=&~2(\nabla u^m,u^m_{t})(\Delta u^m,u^m_{tt})+[||\nabla u^m||^2-b](\Delta u^m_{t}, u^m_{tt})\\
=&~ \dfrac{d}{dt}Q_1(t)+P_1(t)
\end{align}
where 
\begin{align}
Q_1(t) = & ~-\dfrac{1}{2}[||\nabla u^m||^2-b]||\nabla u^m_{t}||^2-(\Delta u^m,u^m_{t})^2, \\
P_1(t) = & ~-3(\Delta u^m,u^m_{t})||\nabla u^m_{t}||^2.
\end{align}

Thus we have
\begin{align}
|B(t)| \le &~ C\Big\{\|~\big[||\nabla u^m||^2-b\big]||\nabla u^m_{t}||^2-(\Delta u^m,u^m_{t})^2\|_0\|u^m_{t}\|_0\Big|_0^t+\int_0^t|(\Delta u^m,u^m_{t})|~||\nabla u^m_{t}||^2 d\tau \Big\}\notag\\
\le &~\epsilon ||u^m_{t}(t)||_2^2 +C_{\epsilon}(R)||u^m_{t}||^2+C(\bE^s(0)) + C_R\int_0^t\|u^m_{t}\|_2^2 d\tau
\end{align}
We again appeal to the hyperbolic estimate:
$$||u^m_{t}(t)||^2_0 \le ||u^m_{t}(0)||^2_0 +\int_0^t||u^m_{tt}(\tau)||_0^2 d\tau,$$ which yields:
\begin{align}
|B(t)| \le &~\epsilon ||u^m_{t}(t)||_2^2 +C_{\epsilon,R}(\bE^s(0)) + C_{\epsilon,R}\int_0^t[\|u^m_{t}\|_2^2+\|u^m_{tt}\|_0^2] d\tau \\
\le &~ ||u^m_{t}(t)||_2^2 +C_{\epsilon,R}(\bE^s(0)) + C_{\epsilon,R}\int_0^t\mathbf E^s(\tau) d\tau
\end{align}

 Hence the above inequality, when used in conjunction with \eqref{equiv}, \eqref{strongbalance}, and Gronwall's inequality, provide uniform boundedness of the higher norm energy $\bE^s(t)$ on any finite time interval $[0,T]$, for  any {\it finite }  $T$.

\subsubsection{STEP 2}
 \begin{remark}
Using the variation of parameters formula, and global-in-time bounds of delayed plate trajectories in the energy topology, one can obtain the bounds in \eqref{keybound1} and \eqref{keybound2} straightforwardly, however this requires {\em large viscous damping} (see \eqref{varpar0}). Here we show below that these bounds (and the final result for strong stability for smooth initial data) can be obtained with minimal viscous damping and NO static damping. 
\end{remark}
 In order to obtain uniform-in-time boundedness for {\em plate solutions} (as in \ref{keybound1} and \ref{keybound2}), more work is needed. The needed argument depends on the reduction in Theorem \ref{rewrite}, where the flow dynamics are reduced
to a non-dissipative and a delayed term in the plate dynamics. This reduction is valid for sufficiently large times---thus the result of STEP 1 applies to all the times before the delayed model is valid.

We return to the delay representation of the  plate solutions (Theorem \ref{rewrite}) and utilize our finite time horizon regularity from STEP 1 to obtain regular {\em plate data} at time $T$. Considering $(\ou,\ou_t)=(u^m_{t},u^m_{tt})$, we write
\begin{equation}\label{w}
\ou_{tt} + \Delta^2 \ou + (k + 1) \ou_t  =\frac{d}{dt} \left\{[||\nabla u^m||^2-b]\Delta u^m \right\}- U \partial_x \ou - \frac{d}{dt} \left\{q(u^m)\right\}
\end{equation} which is valid for any $t>\max \{T,t^{\#}\}$.

We utilize an approach taken in \cite{springer} for the non-delayed plate. We will define a Lyapunov-type function for the time-differentiated, smooth dynamics so that we may use Gronwall, along with the finiteness of the dissipation integral (Corollary \ref{dissint}):
$$\int_0^{\infty}||u^m_{t}||^2 d\tau = \int_0^{\infty} ||\ou||^2d\tau<+\infty.$$ 

\noindent Let us use some more convenient notation here: $E(t)=\dfrac{1}{2}[||\Delta \ou||^2+||\ou_t||^2]$. Then define $$Q(t) = E(t)-Q_1(t)+\nu||\ou||^2,$$
where $Q_1$ and $P_1$ are as above.

Let $\nu,\mu >0$. We can choose $\nu(R)$ large enough so that $Q(t)$ is positive \cite{springer}; we make this choice and then keep $\nu$ fixed. 
We will now construct a Lyapunov-like function for the dynamics $T^{\ou}_t(x_0) = (\ou(t),\ou_t(t);\ou^t) \in \mathbf H$ with $x_0 \in \mathbf H$. There are constants $a_0(\nu)$ and $a_1(\nu)$ so that:
$${a_0}\Big[||\Delta \ou||^2+||\ou_t||^2\Big]\le  Q(t) \le {a_1}\Big[||\Delta \ou||^2+||\ou_t||^2\Big].$$
The parametric Lyapunov function is then:
$$W(t) = Q(t)+\varepsilon (\ou_t,\ou)+\mu\int_0^{t^*}\int_{t-s}^tE(\tau)d\tau ds,~~\varepsilon \in (0,\varepsilon_0).$$
The parameter $\mu$ will be addressed below.
The value of $\varepsilon_0$ is chosen so that 
\begin{equation}\label{abovebelow} \dfrac{1}{2}a_0 E(t) \le W(t) \le 2a_1E(t)+\mu t^*\int_{t-t^*}^tE(\tau),~~\forall~~\varepsilon \in (0,\varepsilon_0),\end{equation}
 and we note that 
$$\mu \int_0^{t^*}\int_{t-s}^t E(\tau) d\tau ds \le \mu t^* \int_{t-t^*}^t E(\tau).$$
We compute $W'(t)$:
\begin{align}\label{w1}
\dfrac{d}{dt} W(t) =&~ \langle \Delta \ou,\Delta \ou_t\rangle +\langle \ou_t,\ou_{tt}\rangle -\dfrac{d}{dt}Q_1(t)+\nu\langle \ou,\ou_t\rangle  \\
&+\varepsilon \langle \ou_{tt},\ou\rangle +\varepsilon ||\ou_t||^2 +\mu t^*E(t)-\mu \int_{t-t^*}^t E(\tau)
\end{align}
Make the replacement:
$$\ou_{tt} = - \Delta^2 \ou-(k+1)\ou_t -\dfrac{d}{dt}q^{u^m}-\dfrac{d}{dt}[f(u^m)]-U\ou_x.$$
Then:
\begin{align*}
\dfrac{d}{dt} W(t) =&~(\mu t^*+\varepsilon -k-1)||\ou_t||^2+(\nu-\varepsilon k-1)\langle \ou,\ou_t\rangle -\langle [q^{u^m}]',\ou_t\rangle\\\nonumber& -\langle [f(u^m)]',\ou_t\rangle -U\langle \ou_x,\ou_t\rangle -\dfrac{d}{dt}Q_1 \\
&(\mu t^*-\varepsilon)||\Delta \ou||^2+\varepsilon\langle [q^{u^m}]',\ou\rangle -\varepsilon\langle [f(u)]',\ou\rangle -\varepsilon U\langle \ou_x,\ou\rangle -\mu \int_{t-t^*}^t E(\tau) \end{align*}

Thus:
\begin{align}
\dfrac{d}{dt}W(t)&+(k+1)||\ou_t||^2+\varepsilon||\Delta \ou||^2+\mu \int_{t-t^*}^t E(\tau)d\tau  \\\nonumber
=&~(\mu t^*+\varepsilon)||\ou_t||^2+\mu t^* ||\Delta \ou||^2+P_1(t)-\varepsilon\langle [f(u^m)]',\ou\rangle \\\nonumber
&-{\varepsilon\langle [q^{u^m}]',\ou\rangle-\langle [q^{u^m}]',\ou_t\rangle }+(\nu-\varepsilon(k+1))\langle \ou,\ou_t\rangle -\varepsilon U\langle \ou_x,\ou\rangle-U\langle \ou_x,\ou_t\rangle ,
\end{align}
Via \eqref{abovebelow}, choosing $\mu$ and $\varepsilon$ sufficiently small, there is a $\gamma(a_1,\nu,k,\mu,t^*)>0$ so that for any $\varepsilon \in (0,\varepsilon_0)$:
\begin{align}\label{zzz}
\dfrac{d}{dt}W(t)&+\gamma [E(t)+\int_{t-t^*}^t E(\tau) d\tau ] \le  C\Big\{ P_1(t)-\varepsilon\langle [f(u)]',\ou\rangle -\varepsilon\langle[q^{u^m}]',\ou\rangle
\\\nonumber&-\langle [q^{u^m}]',\ou_t\rangle+(\nu-\varepsilon k-1)\langle \ou,\ou_t\rangle-\varepsilon U\langle \ou_x,\ou\rangle -U\langle \ou_x,\ou_t\rangle\Big\}
\end{align}
\begin{lemma}\label{lyapo}
There exists a $\widetilde \gamma>0$ (with the same dependencies $\gamma$  above), and constant $M$ (depending on $R$ and the same quantities as $\widetilde \gamma$) so that:
\begin{equation}
W'(t)+\widetilde \gamma W(t)  \le M[1+||\ou||^2E(t)]
\end{equation}
\end{lemma}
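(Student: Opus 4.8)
\textbf{Proof proposal for Lemma \ref{lyapo}.}

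The plan is to absorb the right-hand side of \eqref{zzz} into the dissipation terms $\gamma[E(t)+\int_{t-t^*}^t E(\tau)\,d\tau]$, up to a remainder of the form $M[1+\|\ou\|^2 E(t)]$, and then to compare $E(t)$ with $W(t)$ via the two-sided bound \eqref{abovebelow}. First I would dispose of the genuinely lower-order and benign terms on the right of \eqref{zzz}: the bilinear forms $(\nu-\varepsilon k-1)\lb \ou,\ou_t\rb$, $\varepsilon U\lb \ou_x,\ou\rb$, and $U\lb \ou_x,\ou_t\rb$ are all controlled by $\epsilon_1 E(t)+C_{\epsilon_1}\|\ou\|^2$ using $\|\ou_x\|\le C\|\Delta\ou\|$ and Young's inequality; choosing $\epsilon_1$ small relative to $\gamma$ these are absorbed, leaving a $C_{\epsilon_1}\|\ou\|^2$ contribution which is part of the claimed $M[1+\|\ou\|^2E(t)]$ (in fact even better --- a bare $\|\ou\|^2$). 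The term $P_1(t) = -3(\Delta u^m,u^m_t)\|\nabla u^m_t\|^2$ is handled using the global-in-time energy bound $\|\Delta u^m\|\le C_R$, $\|u^m_t\|_0\le C_R$ together with $\ou=u^m_t$: one writes $|P_1(t)|\le C_R\|\ou\|\,\|\nabla\ou_t\|^2$? --- no; rather $\|\nabla u^m_t\|^2 = \|\nabla\ou\|^2\le \epsilon\|\Delta\ou\|^2 + C_\epsilon\|\ou\|^2$ and $|(\Delta u^m,\ou)|\le C_R\|\ou\|$, giving $|P_1(t)|\le C_R\|\ou\|(\epsilon E(t)+C_\epsilon\|\ou\|^2)\le \epsilon C_R\|\ou\|\,E(t)+C\|\ou\|^3$; the first piece is of the form $\|\ou\|^2E(t)$ after another Young split (or is directly $\le \|\ou\|^2 E(t)+ C E(t)$ which can be absorbed since we will later also have $E$ bounded), and $\|\ou\|^3\le C_R\|\ou\|^2$ by energy boundedness.

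The nonlinear terms $-\varepsilon\lb [f(u^m)]',\ou\rb$ require the Berger structure. Since $f(u) = [b-\|\nabla u\|^2]\Delta u$, one computes $[f(u^m)]' = -2(\nabla u^m,\ou)\Delta u^m + [b-\|\nabla u^m\|^2]\Delta\ou$, so that $\lb [f(u^m)]',\ou\rb = -2(\nabla u^m,\ou)(\Delta u^m,\ou) + [b-\|\nabla u^m\|^2]\lb\Delta\ou,\ou\rb = -2(\nabla u^m,\ou)(\Delta u^m,\ou) - [b-\|\nabla u^m\|^2]\|\nabla\ou\|^2$; using $\|\nabla u^m\|,\|\Delta u^m\|,\|\ou\|_0\le C_R$ this is bounded by $C_R\|\ou\|^2 + C_R\|\nabla\ou\|^2 \le C_R\|\ou\|^2 + \epsilon E(t)$, again split into an absorbable $\epsilon E(t)$ and a $C_R\|\ou\|^2$ remainder. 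The delay terms $-\lb [q^{u^m}]',\ou_t\rb$ and $-\varepsilon\lb[q^{u^m}]',\ou\rb$ are the most structural: using the explicit formula \eqref{qderiv} for $\partial_t[q^u]$, each of the four terms there is an average over $s\in[0,t^*]$, $\theta\in[0,2\pi]$ of $[M_\theta^2 u^m]_{\text{ext}}$ or $[M_\theta^2 u^m_x]_{\text{ext}}$, $[M_\theta^2 u^m_y]_{\text{ext}}$ evaluated at retarded times $t-s$; since $M_\theta^2$ is a second-order constant-coefficient operator, $\|[M_\theta^2 u^m](\cdot,t-s)\|_{-2}\lesssim \|u^m(t-s)\|_0$ --- but we are pairing against $\ou_t$, not something with two derivatives to spare. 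Instead one follows \cite{delay,springer}: pair $[q^{u^m}]'$ against $\ou_t\in L_2(\Omega)$ by noting $\lb[q^{u^m}]',\ou_t\rb$ involves $\lb M_\theta^2 u^m, \ou_t\rb = \lb u^m, M_\theta^{2,*}\ou_t\rb$, integrate the $M_\theta$ derivatives by parts onto the smooth $\ou=u^m_t$ --- wait, $\ou_t = u^m_{tt}$ is only $L_2$; the correct route (as in \cite{delay}) is to estimate $\|[q^{u^m}]'\|_0 \le C\sup_{s\in[0,t^*]}\|u^m(t-s)\|_{H^2}\le C\sup_{s}\|\Delta\ou(t-s)\|$? no, $q$ depends on $u^m$ not $\ou$. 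One has $\|[q^{u^m}]'\|_{L_2(\Omega)}\le C\sup_{\tau\in[t-t^*,t]}(\|u^m(\tau)\|_{H^2}+\|u^m_x(\tau)\|_{H^2})\le C\sup_{\tau}\|u^m(\tau)\|_{H^3}$, which by the STEP 1 finite-time bound and the delay-plate energy bound is $\le C_R$; then $|\lb[q^{u^m}]',\ou_t\rb|\le C_R\|\ou_t\|\le \epsilon E(t)+C_R$ and $|\varepsilon\lb[q^{u^m}]',\ou\rb|\le C_R\|\ou\|$. These produce absorbable $\epsilon E(t)$ terms plus bounded remainders and a $\|\ou\|$ term, all subsumed in $M[1+\|\ou\|^2E(t)]$ (using $1\ge$ constants, $\|\ou\|\le C_R$).

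Having absorbed everything into $\gamma$ on the left and a residual $M[1+\|\ou\|^2E(t)]$ on the right, the inequality \eqref{zzz} becomes $\frac{d}{dt}W(t)+\frac{\gamma}{2}[E(t)+\int_{t-t^*}^t E(\tau)\,d\tau]\le M[1+\|\ou\|^2E(t)]$. Finally, by the upper bound in \eqref{abovebelow}, $W(t)\le 2a_1 E(t)+\mu t^*\int_{t-t^*}^t E(\tau)\,d\tau \le C_\gamma[E(t)+\int_{t-t^*}^t E(\tau)\,d\tau]$, so $\frac{\gamma}{2}[E(t)+\int_{t-t^*}^t E(\tau)\,d\tau]\ge \widetilde\gamma W(t)$ for $\widetilde\gamma = \gamma/(2C_\gamma)$; substituting gives $W'(t)+\widetilde\gamma W(t)\le M[1+\|\ou\|^2 E(t)]$, which is the claim. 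The main obstacle I anticipate is the delay terms: one must verify that $[q^{u^m}]'$ can genuinely be estimated in $L_2(\Omega)$ by the $H^3$-in-space norm of $u^m$ over the delay window, and that this norm is under uniform control --- this is exactly where STEP 1's finite-time propagation feeds in to supply a regular datum at time $T$, after which the delay-plate energy estimates of \cite{delay} take over; the bookkeeping of which bound ($C_R$ vs.\ $C_{m,T}$ vs.\ $C_1$) is available at which stage is the delicate point, and it is essential that the constant $M$ ultimately depend only on $R$ and not on $m$.
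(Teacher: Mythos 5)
Your overall strategy---estimate each term on the right of \eqref{zzz}, absorb the small multiples of $E(t)+\int_{t-t^*}^tE(\tau)\,d\tau$ into the left-hand side, and convert the surviving dissipation into $\widetilde\gamma W(t)$ via the upper bound in \eqref{abovebelow}---is the same as the paper's, and your handling of the bilinear lower-order terms, of $P_1$, and of $\lb [f(u^m)]',\ou\rb$ is essentially correct (the paper simply keeps the $\|\ou\|^2E(t)$ remainder coming from $P_1$ rather than absorbing it; either way the claimed inequality follows, since the right-hand side of the lemma tolerates such a term).

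The genuine gap is in the delay term $\lb[q^{u^m}]',\ou_t\rb$, which you yourself flag as the delicate one. You estimate $\|[q^{u^m}]'\|_{L_2(\Omega)}\le C\sup_{\tau\in[t-t^*,t]}\|u^m(\tau)\|_{H^3}$ and then assert this is $\le C_R$ ``by the STEP 1 finite-time bound and the delay-plate energy bound.'' Neither supplies this: STEP 1 gives $\|u^m(t)\|_{4}\le C_{m,T}$ only on a finite interval $[0,T]$ (with a constant depending on $m$ and $T$), while the only global-in-time bound available at this stage is the energy-level bound $\sup_t\|u^m(t)\|_2\le C_R$. A uniform-in-time $H^3$ (indeed $H^4$) bound on $u^m$ is precisely what STEP 2 is in the process of establishing, so invoking it here is circular. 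The paper resolves this by interpolating, $\|u^m\|_3^2\le\delta\|u^m\|_4^2+C_\delta(R)\|u^m\|_2^2$, and then reading $\|u^m\|_4$ off the biharmonic problem $\Delta^2u^m=-\ou_t-(k+1)\ou+[\|\nabla u^m\|^2-b]\Delta u^m-Uu^m_x-q^{u^m}$ with clamped boundary conditions, which yields $\|u^m\|_4^2\le C(R)[E(t)+1]$; the resulting contribution $\delta\left[E(t)+\int_{t-t^*}^tE(\tau)\,d\tau\right]$ is then absorbed by the delay-window dissipation on the left of \eqref{zzz}---this is the entire reason the Lyapunov function $W$ was built with the term $\mu\int_0^{t^*}\int_{t-s}^tE(\tau)\,d\tau\,ds$. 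Without this interpolation-plus-elliptic step your argument does not close (and if your claimed $C_R$ bound on $\|[q^{u^m}]'\|_0$ were true, that double-integral term in $W$ would be superfluous); with it, the rest of your outline goes through.
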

\begin{proof}[Proof of Lemma \ref{lyapo}]

We recall \eqref{qderiv}. So, along with the global-in-time priori bounds for the lower energy energy level $(u^m,u^m_t)$,
$$\sup_{t \in [t_0,\infty)} \Big[||u^m||^2_2+||\ou||^2 \Big]<\infty,$$
and 
\begin{align*}\dfrac{d}{dt}f(u^m)=&~-2(\nabla u^m,\nabla u^m_t)\Delta u^m+[b-||\nabla u^m||^2]\Delta u^m_t\\
=&~2(\Delta u^m, \overline u)\Delta u^m+[b-||\nabla u^m||^2]\Delta \overline u,\end{align*}
we obtain:
\begin{align}
|P_1| \le &~C||\Delta u^m||~||\ou||~||\nabla \ou||^2 \\[.2cm]
 |\langle [f(u^m)]',\ou\rangle | \le &~ C(R)\big[||\ou||^2+||\ou||~||\Delta \ou||\big]\\[.2cm]
 (\nu-\varepsilon k-1)|\langle \ou, \ou_t\rangle | \le &~\delta ||\ou_t||^2+C_{\delta}(\nu,\varepsilon,k,R)\\[.2cm]
 \varepsilon U|\langle \ou_x,\ou\rangle+U|\langle \ou_x,\ou_t\rangle | \le &~\delta[ ||\ou_t||^2+||\Delta \ou||^2] +C_{\delta}(\varepsilon,U,R)\\[.2cm]
\varepsilon |\langle [q^{u^m}]',\ou \rangle| \le&~ C(\varepsilon)\left(||u^m(t)||^2_{2}+||u^m(t-t^*)||^2_{2}+\int_{-t^*}^0||u^m(t+\tau)||^2_2 d\tau\right) \\\nonumber
&+\delta ||\ou||_2^2+C_{\delta}(\varepsilon) ||\ou||^2 \\[.2cm]
\le &~\delta E(t) +C_{\delta}(\varepsilon,t^*,R)
\end{align}
And, noting the explicit estimates for the term $q_t^u$ performed in \cite[(6.1)]{delay}, we also have
\begin{align}
\langle \partial_t [q^{u^m}], \ou_t \rangle \leq \delta ||\ou_t||^2 +C_{\delta} \Big(\|u^m(t)\|^2_{2,\Omega}
 + \| u^m(t-t^*)\|^2_{2,\Omega} \\\nonumber+
  \int_{-t^*}^0 \|u^m(t+\tau)\|^2_{3,\Omega} ) d \tau\Big).
\end{align}

\noindent Then we note: $$\|u\|^2_{3} \leq \delta \|u\|^2_4 + C_{\epsilon}(R)\|u\|^2_2 \leq C_{\epsilon}  + \epsilon \|u\|^2_4, $$
where $\|u\|_4 $ is to be calculated from an elliptic equation in terms of $\|(\ou,\ou_t) \|_{{Y}_{pl}}$.
Indeed, we consider biharmonic problem with  the clamped boundary conditions.
$$ \Delta^2 u^m = - \ou_t - (k+1) \ou  + [||\nabla u^m||^2-b]\Delta u^m -Uu^m_{x} - q^{u^m} $$
with
$$u^m = \Dn u^m =0 ~\text{on}~ \partial \Omega.$$
This gives
\begin{align*}\|u^m\|_4 \leq& ~C  \|\ou_t - (k+1) \ou  + [||\nabla u^m||^2-b]\Delta u^m -u^m_{x}  - q^{u^m}\|
\\
\leq&~ C  \left[\|(\ou,\ou_t)\|_{Y_{pl}} +{ \|u^m\|_1 + \|u^m\|^3_2 + \|u^m\|^2_2 }\right].
\end{align*}
 We have
$$\|u^m\|^2_4 \leq  C(R) [E(t) + {1}].$$

\begin{equation}
\langle [q^{u^m}]',\ou_t \rangle \le ||\ou_t||_0 \left|\left|\dfrac{d}{dt} q^{u^m} \right|\right|_0 \le \delta \left[E(t)+\int_{t-t^*}^t E(\tau) d\tau\right] + C_{\delta}(t^*,R)
\end{equation}

Combining these estimates, we return to  \eqref{zzz}:
\begin{align}
\dfrac{d}{dt}W(t)+\gamma [E(t)+\int_{t-t^*}^t E(\tau) d\tau]=&~  C\Big\{ P_1(t)-\varepsilon([f(u^m)]',\ou)-{\varepsilon([q^{u^m}]',\ou)-([q^{u^m}]',\ou_t)}
\\\nonumber&+(\nu-\varepsilon (k+1))(\ou,\ou_t)-\varepsilon U(\ou_x,\ou)-U(\ou_x,\ou_t)\Big\} \\
\le &~ \delta [E(t)+\int_{t-t^*}^t E(\tau)d\tau] +C_{\delta}(\varepsilon,\nu,R) ||\ou||^2E(t) \\\nonumber
&+ C_{\delta}(R,\nu,\varepsilon,t^*,U,k)
\end{align}
Scaling $\delta$ appropriately and absorbing terms, we have:
$$\dfrac{d}{dt}W(t)+\gamma [E(t)+\int_{t-t^*}^t E(\tau) d\tau] \le M+C||\ou||^2E(t).$$
Recalling \eqref{abovebelow}, we have some $\widetilde \gamma>0 $ such that:
$$\dfrac{d}{dt}W(t)+\widetilde \gamma W(t) \le M+C||\ou||^2E(t).$$
\end{proof}
Noting \eqref{abovebelow}, we have:
$$\dfrac{d}{dt}W(t)+\widetilde \gamma W(t) \le M+C_1||\ou||^2 W(t),$$
and utilizing the integrating factor we see that
$$\dfrac{d}{dt}\left(e^{\widetilde \gamma t}W(t) \right) \le Me^{\widetilde \gamma t}+C_1e^{\widetilde \gamma t}||\ou||^2W(t).$$
Integrating, we have:
$$e^{\widetilde \gamma T} W(T) \le e^{\widetilde \gamma t_0}W(t_0)+\dfrac{M}{\widetilde \gamma}\left[e^{\widetilde \gamma T}-e^{\widetilde \gamma t_0}\right]+C_1\int_{t_0}^Te^{\widetilde \gamma t}||\ou(t)||^2W(t)dt,$$
and thus
\begin{align*} W(T) \le&~ e^{-\widetilde \gamma(T-t_0)} W(t_0)+\dfrac{M}{\widetilde \gamma}\left[1-e^{-\widetilde \gamma (T-t_0)}\right]+C_1\int_{t_0}^T||\ou(t)||^2W(t)dt \\
\le &~\dfrac{M}{\widetilde \gamma}+e^{-\widetilde \gamma(T-t_0)} \left[W(t_0)-\dfrac{M}{\widetilde \gamma}\right]+C_1\int_{t_0}^T||\ou(t)||^2W(t)dt.
\end{align*}
 Let $$\alpha(T) = \dfrac{M}{\widetilde \gamma}+e^{-\widetilde \gamma(T-t_0)} \left[W(t_0)-\dfrac{M}{\widetilde \gamma}\right],$$ and note that $\alpha(T) \ge 0$ for $T\ge t_0$. Then, using Grownall, we have:
$$W(T) \le \alpha(t)+\int_{t_0}^T \alpha(t)||\ou(t)||^2\exp\left\{\int_t^T||\ou(s)||^2ds\right\} dt.$$ Note that $\alpha(T) \ge 0$ and $\alpha$ is bounded, and thus, owing to the finiteness of the dissipation integral $$\int_{t_0}^{\infty}||\ou||^2 dt=\int_{t_0}^{\infty} ||u^m_{t}||_0^2dt<+\infty,$$ we obtain global-in-time boundedness of $W(t)$. Then, using \eqref{abovebelow}, we have global-in-time boundedness of $$E(t)=\dfrac{1}{2}\big[||\Delta u^m_{t}||^2+||u^m_{tt}||^2\big].$$Via elliptic theory (as utilized above), we also have 
$||u^m(t)||_4^2 $ globally bounded in time. We note that $W(t_0)$ has the requisite regularity by STEP 1 above (where we have shown propagation of regularity on the finite time horizon for the full flow-plate dynamics).

This concludes the proof of Theorem \ref{highernorms}.
\end{proof}

\begin{remark}
As mentioned above, the proof steps can be easily adapted to the case of the von Karman nonlinearity using the {\em sharp regularity of the Airy Stress function} \cite[p. 44]{springer}.
\end{remark}

\subsection{Strong Convergence for Smooth Data}\label{highernorms000}
From Theorem \ref{highernorms} we see that for the Berger dynamics (and, in fact for von Karman), in the presence of minimal viscous damping (only requiring $k>0$), we have a propagation of regularity result on the infinite-time horizon. From this bound, for initial data $y^m \in \mathscr D(\mathbb T)$, we can immediately apply Theorem \ref{highernorms0}. This yields one of the primary results of \cite[Theorem 2.1]{conequil1}:

\begin{theorem}\label{regresult} 
Consider $y_m \in \cD(\bT)\cap B_R(Y) \subset (H^4\cap H_0^2)(\Omega)\times H_0^2(\Omega) \times H^2(\realsthree_+) \times H^1(\realsthree_+) \subset Y$. Suppose that the initial flow data are supported on a ball of radius $\rho_0$ (as in Theorem \ref{rewrite}). Suppose $k>0$. Then for any sequence of times $t_n \to \infty$ there is a subsequence of times $t_{n_k}$ identified by $t_k$ and a point $\hat y =(\hat u, 0; \hat \phi, 0) \in Y$ , with $(\hat u, \hat \phi) \in \mathcal N$, so that $$\|S_{t_k}(y^m) - \hat y\|_{Y_{\rho}} \to 0,~~t_k \to \infty.$$    \end{theorem}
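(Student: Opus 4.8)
The plan is to combine the two heavy-lifting results already established in the excerpt. Theorem~\ref{highernorms} provides, for smooth initial data $y^m \in \cD(\bT) \cap B_R(Y)$ with localized flow data, the uniform-in-time bound
\[
\sup_{t \in [0,\infty)}\left\{\|u^m(t)\|_4^2 + \|\Delta u^m_t(t)\|_0^2 + \|u^m_{tt}(t)\|_0^2\right\} \le C_1 < \infty,
\]
where $C_1$ depends only on $R$ (and the intrinsic parameters $p_0, b, \Omega, U, \rho_0$), and crucially \emph{not} on the size of $k$ beyond requiring $k>0$. This is exactly the hypothesis \eqref{thisone} needed to invoke Theorem~\ref{highernorms0}.

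The argument is then a short syllogism. First I would fix $y^m \in \cD(\bT)\cap B_R(Y)$ with flow data supported in $K_{\rho_0}$, and apply Theorem~\ref{highernorms} to obtain the global bound above with some $T^* = 0$ (or any finite $T^*$ after the reduction time $t^{\#}$; the finite-time propagation from STEP~1 covers the initial interval, and STEP~2 covers $[T^*,\infty)$). Second, I would feed this bound directly into Theorem~\ref{highernorms0}, whose hypotheses are precisely (a) $0\le U \ne 1$, $p_0 \in L_2(\Omega)$, $k>0$, (b) flow-localized finite-energy data, and (c) the uniform higher-norm bound \eqref{thisone}. The conclusion of Theorem~\ref{highernorms0} is exactly the assertion of Theorem~\ref{regresult}: for any $t_n \to \infty$ there is a subsequence $t_{n_k}=t_k$ and a point $\hat y = (\hat u, 0; \hat\phi, 0)$ with $(\hat u,\hat\phi) \in \mathcal N$ such that $\|S_{t_k}(y^m) - \hat y\|_{Y_\rho} \to 0$ for every $\rho>0$. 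One should also note in passing that $y^m \in \cD(\bT)$ automatically lies in the regularity class $(H^4\cap H^2_0)(\Omega)\times H^2_0(\Omega)\times H^2(\realsthree_+)\times H^1(\realsthree_+)$ by the domain inclusion \eqref{domainprop}, which is why the $Y_\rho$-convergence (rather than merely $\widetilde Y$-convergence) is meaningful here, and one should record that $\phi^*(t)\to 0$ in local flow energy handles the homogeneous-data piece of the flow.

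There is essentially no main obstacle in this particular proof — all the difficulty has been pushed into Theorem~\ref{highernorms} (the propagation of regularity, which required the delicate Lyapunov/Gronwall argument of STEP~2 to avoid large static damping) and into Theorem~\ref{highernorms0} (transferring plate compactness to the flow through the low-regularity hyperbolic Neumann map via Lemmas~\ref{compact2} and \ref{compactnesscriterion}). The only point requiring the smallest amount of care is the bookkeeping of constants: one must verify that the $C_1$ produced by Theorem~\ref{highernorms} genuinely depends only on $R = \|y^m\|_Y$ and the fixed physical data, so that the conclusion is uniform over the ball $B_R(Y)$, which is what makes this result usable as the "smooth data" base case in the approximation scheme of the main theorem. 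Since Theorem~\ref{highernorms} is stated with exactly this dependence, the proof of Theorem~\ref{regresult} is complete upon citing these two results in sequence.
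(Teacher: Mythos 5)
Your proposal is correct and is essentially identical to the paper's own argument: the text preceding Theorem \ref{regresult} states that the global bound from Theorem \ref{highernorms} allows one to ``immediately apply Theorem \ref{highernorms0}'' for data $y^m\in\cD(\bT)\cap Y_{\rho_0}$, which is exactly your two-step syllogism. The only (inconsequential) nitpick is that convergence in $Y_\rho$ for every $\rho>0$ is by definition the same as convergence in the sense of $\widetilde Y$, so the remark distinguishing the two is unnecessary.
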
 \noindent This implies the primary result: strong convergence of $(u(t),\phi(t))$ to the equilibria set $\mathcal N$ for initial data $y_0 \in \mathscr D(\mathbb T) \cap Y_{\rho_0}$.

\begin{remark}\label{passing}
We note that the situation when stability to the set of equilibria is achieved for smooth initial data is typical in the analysis of strong stability for hyperbolic models with damping. The next step is to extend the desired property via density to all initial data residing in the finite energy space. This can be done (see, e.g., \cite{ball}) via a uniform-in-time Lipschitz (or Hadamard) property of the semigroup. As we describe in the next section, this is precisely the key issue to contend with in the analyses in \cite{conequil1} (where large static damping was used in conjunction with large static damping) and here, where a fortuitous decomposition allows us to consider only large viscous damping.
\end{remark}

		\subsection{ Strong Convergence for Finite Energy Data---Technical Difficulties} \label{techdiff}

	With the technical background established in Sections \ref{techpres}--\ref{propagate}, we now address the specific difficulties involved in showing a stabilization to equilibria result without assuming either (i) terms of the form $k_1\Delta u_{tt}-k_2\Delta u_t$ (as in \cite{chuey,springer}), or (ii) exploiting parabolic effects in a thermoelastic plate \cite{ryz,ryz2}. In both cases, as well as in \cite{delay}, the key task is {\em to first show compact end behavior for the plate dynamics}. This requires the use of a reduction of the full flow-plate dynamics to a delayed plate equation (Theorem \ref{rewrite} above), at which point one may work on this delayed plate system. In both case (i) and (ii)  above the ultimate character of the nonlinear component of the model is {\em compact}---owing to the fact that parabolic smoothing {\em and} rotational inertia both provide $\nabla u_t \in L_2(\Omega)$. This permits compactness of the boundary-to-flow (Lopatinski or hyperbolic Neumann) mapping. The results in \cite{delay} were the first to show that dissipation could be harvested from the flow in order to show ultimate compactness of the plate dynamics {\em without} utilizing the compactness of this mapping.

We note that in \cite{ryz2} the key to the stabilization result lies in a compactness criterion (given as Lemma 10 in \cite[p. 472]{ryz}) for flow trajectories (given here as \eqref{compact2}). The flow bound in higher norms is obtained via the thermoelastic character of the structural dynamics. 

In \cite{conequil1}, the major contribution was the ability to circumvent the seeming lack of natural compactness in the dynamics (for von Karman or Berger) by utilizing large static {\em and} viscous damping. The requirement for large static and viscous damping was needed to show exponential decay of plate trajectories, which resulted in uniform-in-time Hadamard continuity of the full flow-plate dynamics (see Theorem \ref{strongcont} and Remark \ref{passing} above). With this, and the desired result holding for regular initial data (Theorem \ref{regresult}), an approximation argument was used to pass convergence properties from regular data onto finite energy initial data. The principal result there (\cite[Theorem 2.3]{conequil1}) is the analog of  Theorem \ref{conequil}, however, {\em it requires  the addition of large static damping}.
\begin{theorem}\label{conequil1*}
Consider \eqref{withbeta}. Let $0\le U<1$, and assume $p_0 \in L_2(\Omega)$. Then there are minimal damping coefficients $k_{min}$ and $\beta_{min}$ so that for $k\ge k_{min}>0$ and $\beta \ge \beta_{min}>0$  any generalized solution $(u(t),\phi(t))$ to the system with localized (in space) initial flow data in $Y_{\rho}$ has the property that \begin{align*}\lim_{t \to \infty} \inf_{(\hat u,\hat \phi) \in \mathcal N}\left\{\|u(t)-\hat u\|^2_{H^2(\Omega)}+\|u_t(t)\|^2_{L_2(\Omega)}+\|\phi(t)-\hat \phi\|_{H^1( K_{\rho} )}^2+\|\phi_t(t)\|^2_{L_2( K_{\rho} )} \right\}\\=0, \text{ for any }  \rho>0.\end{align*} \end{theorem}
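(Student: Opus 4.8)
\emph{Proof strategy for Theorem \ref{conequil1*}.} The plan is to obtain the finite-energy statement from the smooth-data statement Theorem \ref{regresult} by a density argument of the kind described in \cite{ball} and Remark \ref{passing}; the mechanism that makes this passage work is the uniform-in-time Hadamard continuity of Theorem \ref{strongcont}, and it is there --- not in the bookkeeping below --- that the largeness of both $k$ and $\beta$ is genuinely consumed. Accordingly one takes $k_{min}=k_c$ and $\beta_{min}=\beta_c$, the thresholds of Theorem \ref{strongcont}, Theorem \ref{regresult} merely requiring $k>0$.

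For a generalized solution $(u(t),\phi(t))$ and a fixed $\rho>0$, write $d_\rho(t)$ for the quantity appearing under the limit in Theorem \ref{conequil1*}, i.e. the squared distance, in the local-energy metric, of $(u(t),u_t(t);\phi(t),\phi_t(t))$ to the set $\mathcal N\times\{0\}$. It suffices, by a routine subsequence argument, to produce for each $\epsilon>0$ and each sequence $t_n\to\infty$ a subsequence along which $d_\rho<\epsilon$ from some index on: diagonalizing over $\epsilon_j\downarrow 0$ then gives a subsequence with $d_\rho\to0$, and since this works for every $t_n\to\infty$ it forces $\lim_{t\to\infty}d_\rho(t)=0$. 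Next I would fix the approximation once and for all: given $y_0\in Y$ with flow data supported in $K_{\rho_0}$, mollify and cut off to obtain a sequence $y_0^m\in\cD(\bT)\cap B_Y(R)$ with $y_0^m\to y_0$ in $Y$ and with every $y_0^m$ having flow data supported in one fixed ball $K_{\rho_0'}$, $\rho_0'>\rho_0$. Since $R$ and $\rho_0'$ are now fixed and the absorbing set $\mathscr B$ --- hence the thresholds $k_c,\beta_c$ --- depends only on these parameters together with $p_0,b,\Omega,U$ and not on $y_0$ itself, the size requirements on the damping stay uniform along the approximating family.

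With this in place, fix $\epsilon>0$. Theorem \ref{strongcont} supplies an index $m=m(\epsilon)$ with $\sup_{t\ge0}\|S_t(y_0^m)-S_t(y_0)\|_{Y_\rho}<\epsilon$. Holding that $m$ fixed and applying Theorem \ref{regresult} to $y_0^m$ (with $\rho_0'$ playing the role of the flow-support radius), the prescribed sequence $t_n\to\infty$ has a subsequence $t_{n_k}$ and a point $\hat y=(\hat u,0;\hat\phi,0)$ with $(\hat u,\hat\phi)\in\mathcal N$ for which $\|S_{t_{n_k}}(y_0^m)-\hat y\|_{Y_\rho}\to0$. The triangle inequality in $Y_\rho$ then gives $\|S_{t_{n_k}}(y_0)-\hat y\|_{Y_\rho}<2\epsilon$ for all large $k$, and hence --- using the same comparison between the $Y_\rho$-norm and the quantity defining $d_\rho$ that is invoked at the end of Theorem \ref{regresult}, in particular the Hardy embedding $W^1(\realsthree_+)\hookrightarrow Y_{fl,\rho}$ which places $\hat\phi$ in the local topology --- $d_\rho(t_{n_k})$ is small for all large $k$, precisely the claim needed above. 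Since $\rho>0$ was arbitrary, Theorem \ref{conequil1*} follows.

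The genuinely hard step is Theorem \ref{strongcont} itself, namely the uniform-in-$t$ Hadamard continuity of $S_t(\cdot)$ in the local-energy topology $Y_\rho$; its proof (carried out in \cite{conequil1}) rests on uniform exponential decay of the plate velocities $\|u_t(t)\|_{L_2(\Omega)}$, which --- absent rotational inertia and parabolic smoothing --- is available only when the combined damping $ku_t+\beta u$ is present with both coefficients large. This is precisely what forces the size restrictions in the present theorem, and because the analogous exponential decay (equivalently $u_t\in L_1(0,\infty;L_2(\Omega))$) is \emph{not} available for \eqref{flowplate} with $\beta=0$, it is also the obstruction that the decomposition of Sections \ref{expsec}--\ref{regsec} is designed to circumvent on the way to Theorem \ref{conequil}. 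Within the combination argument itself the only delicate point is that the stationary point $\hat y$ produced by Theorem \ref{regresult} depends on both $m$ and on the extracted subsequence, so one must work solely with the distance $d_\rho$ to $\mathcal N$ and never hold $\hat y$ fixed while letting $m\to\infty$.
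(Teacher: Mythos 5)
Your proposal is correct and follows essentially the same route the paper ascribes to \cite{conequil1}: combine the smooth-data convergence of Theorem \ref{regresult} with the uniform-in-time Hadamard continuity of Theorem \ref{strongcont} (which is where the largeness of both $k$ and $\beta$ is consumed) via a density/triangle-inequality argument, working with the distance to $\mathcal N$ rather than a fixed limit point. Your attention to keeping the approximating data's flow support in a fixed ball and to the $m$-dependence of the limit point matches the paper's own handling of the analogous passage in the proof of Theorem \ref{dichotoo1}.
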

As in Theorem \ref{conequil} here, the minimal damping coefficients $k_{min}$ and $\beta_{min}$ depend on an invariant set for the plate dynamics, which itself depends on the intrinsic quantities in the problem. 

Though the results of \cite{conequil1} stand as the culmination of the treatments \cite{delay,webster}, and critically use the results of \cite{springer,ryz,ryz2}, static damping was necessary to contend with lower order terms (due to the non-dissipative nature of the reduced plate dynamics \eqref{gendelay}), but seemed physically anomalous. 
The key technical point in this treatment is {\em the elimination of this need for large static damping imposed on the structure}. We seek the physical result which says that for large viscous damping only (size dependent upon the intrinsic parameters of the problem) the full-flow plate dynamics stabilize to the stationary set. Our approach here is based on a decomposition of plate dynamics into an exponentially stable component and a smooth component. To show smoothness of this latter component in the decomposition, we require working in {\em higher topologies}.

\section{Proper Proof of Main Results}\label{proofmain}
In this section we prove Theorem \ref{conequil}. The proof follows through several steps and critically uses the results and arguments presented up to this point. 
\subsection{Outline of the Proof}
For the reader's convenience we now give a  summary of each of the key steps in the proof:

In Section \ref{platedecomp} we consider a decomposition of the delayed plate dynamics \eqref{reducedplate}: we consider a $z$ and $w$ component, such that $z+w=u$. We will decompose the nonlinearity in an advantageous way (for time sufficiently large), as well as add static damping $\beta z$ to the $z$ equation and subtract it from the $w$ dynamics. This allows us to use damping of the form $kz_t+\beta z$ in the $z$ plate dynamics, which will result in uniform exponentially stability for \eqref{exp}. For the $w$ component, we will consider zero initial data  (the initial data being left in the $z$ dynamics), giving \eqref{smooth}. The $w$ dynamics, with zero initial data, will be smooth, having appropriate global-in-time bounds.

Section \ref{expsec} and \ref{regsec} provide the proofs associated to the aforementioned plate decomposition and are the major technical contribution of this work. Section \ref{expsec} follows \cite{conequil1} in utilizing Lyapunov methods to show exponential stability of $z$ component of the plate decomposition; Section \eqref{regsec} demonstrates the smoothness of the $w$ component of the decomposed plate dynamics, and critically uses the null initial data in differentiating the dynamics and obtaining global-in-time bounds. 

In Section \ref{flowdecomp} we decompose the {\em flow} according the plate decompositions in the previous step. This allows us to reconstruct a full flow-plate dynamics corresponding to $z$, and a full flow-plate dynamics corresponding to $w$, coupled through a mixed, nonlinear plate term. The $(z,\phi^z)$ dynamics (corresponding to the exponentially stable $z$ plate dynamics) will exhibit global-in-time Hadamard continuity (depending critically on large static and viscous damping), and thus we can approximate any trajectory by smooth data  (this follows the general structure of the argument in \cite{conequil1}). The $(w,\phi^w)$ dynamics (corresponding to the plate dynamics with zero initial data) are smooth, and thus the results and techniques for smooth initial data can be re-applied (Theorems \ref{highernorms0} and \ref{highernorms}).

\subsection{Decomposition of Delay Plate Dynamics}\label{platedecomp}
	We consider the reduced, delay plate equation in \eqref{reducedplate} (see \cite{oldchueshov1,springer,delay}). The more general PDE system with delay \eqref{gendelay} is well-posed on $\mathbf H = H_0^2(\Omega)\times L_2(\Omega) \times L_2\left(-t^*,0;H_0^2(\Omega)\right).$  We write $T_t(x_0) = (u,u_t;u^t)$ with $x_0 =(u(t_0),u_t(t_0);u^{t_0}) \in \mathbf H$. There is a compact global attractor $\mathscr U \subset \mathbf H$ \cite{delay}, and we denote by $\mathscr B \subset \mathbf H$ a fixed absorbing ball for the dynamics of diameter $R$, such that $\mathscr U \subset \mathscr B$. We note that for $k>k_*>0$ the size of the absorbing ball $\mathscr B$ can be taken uniform in $k$.  In what follows, we can reduce the above delay plate dynamics $(T_t,\mathbf H)$ to those on the set $\mathscr B$ by waiting for absorption; this is to say that for any initial data ~~$||(u(t_0),u_t(t_0);u^{t_0})||_{\mathbf H} \le P$~~ there is a time of absorption $t_P$ such that $T_{t_P}(B_{\mathbf H}(P)) \subset \mathscr B$. Thus, by waiting sufficiently long (i.e., choosing $t_0 > t_P$---valid, as we are considering strong stability as $t \to +\infty$), we can reduce dynamics corresponding to any initial data to dynamics with initial data chosen from $\mathscr B$. We will denote this dependence by $R$---the diameter of $\mathscr B$.

Owing to the fact that  $q^{\cdot}$ is linear, we will  apply the following decomposition of  the solution (note this is a different decomposition than typically used 
for the proof of exponential attraction i.e.,  \cite{fgmz}). 

Take $(u(t_0),u_t(t_0);u^{t_0}) \in \mathscr B \subset \mathbf H$. We write $u = z+w$, where
 $z$ and $w$ correspond to the systems:
\begin{equation}\label{exp}\begin{cases} z_{tt} + (k+1) z_t + \beta z+ \Delta^2 z +\cF(z,u)  = \ q^z-Uz_x~~\text{ in } \Omega\\[.2cm]
\cF(z,u) =  [b-||\nabla u||^2]\Delta z\\[.2cm]
z = \Dn z =0 ~~\text{ on } \Gamma,\\
z(t_0) = u(t_0),~~ z_t(t_0) = u_t(t_0),~z|_{(t_0-t^*,t_0)}=u^{t_0}.\end{cases}\end{equation}
\begin{equation}\label{smooth}\begin{cases} w_{tt} + (k+1) w_t + \Delta^2 w  = p_0+ q^w-Uw_x- \cF(w,u)+\beta z~~ \text{ in } \Omega\\[.2cm]
 \cF(w,u) = [b-||\nabla u||^2]\Delta w\\[.2cm]
w=\Dn w=0~~\text{ on } \Gamma,\\ w(t_0) = 0, ~~w_t(t_0) = 0,~~w|_{(t_0-t^*,t_0)}=0.\end{cases}\end{equation}
\begin{remark}
Since we are taking null initial data for the the $w$ portion of the decomposition, we note that the presence of $z$ (via $u=z+w$) in the term $\mathcal F(w,u)$ is the driver of the dynamics. 
\end{remark}
 
Given $u$, the $z/w$ system is well-posed with respect to finite energy topology $\mathcal H$. Indeed, in light of the stability estimates shown below for the $z$ dynamics (with $u$ {\em given})  the analysis in \cite{delay} applies; from there, the $w$ dynamics are well-posed as a difference $w=u-z$. We state this as a lemma:
\begin{lemma}
Consider the dynamics in \eqref{exp} at time $t_0$, with initial data $(u(t_0),u_t(t_0);u^{t_0}) \in \mathscr B \subset \mathbf H$. Given $u \in C([t_0-t^*,T];H_0^2(\Omega))$ the dynamics corresponding to \eqref{exp} are well-posed in the sense of \cite{delay}. Then, considering $w=u-z$, with  $u$ corresponding to the solution in \eqref{reducedplate} on $\mathbf H$, the problem \eqref{smooth} is well posed on $\mathbf H$ with $(w,w_t;w^t)$ as a solution.
\end{lemma}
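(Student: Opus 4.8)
The plan is to first establish well-posedness for the $z$-system \eqref{exp} treating $u$ as a \emph{given} coefficient/forcing, and then obtain $w$ for free as the difference $u-z$.

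First, observe that with $u\in C([t_0-t^*,T];H_0^2(\Omega))$ held fixed, \eqref{exp} is a \emph{linear} (in $z$) non-autonomous plate equation with delay: the term $\cF(z,u)=[b-\|\nabla u\|^2]\Delta z$ is linear in $z$ with scalar coefficient $c(t):=b-\|\nabla u(t)\|^2$, which is continuous on $[t_0-t^*,T]$ and uniformly bounded (since $\|\nabla u(t)\|^2\le C\,\|\Delta u(t)\|^2$ and $u$ is continuous into $H_0^2(\Omega)$); the delay operator $q^z$ is a bounded linear map from the history space $L_2(-t^*,0;H_0^2(\Omega))$ into $L_2(\Omega)$ (indeed into a higher topology, cf.\ \eqref{potential} and \eqref{qderiv}); and $-Uz_x$ is a lower-order, non-dissipative perturbation. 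I would run the same approximation scheme as in \cite{delay} on $\mathbf H$, deriving the basic energy identity by multiplying the $z$-equation by $z_t$: the damping $(k+1)z_t+\beta z$ together with the $\Delta^2$ term controls $\|(z,z_t)\|_{Y_{pl}}$, the coefficient $c(t)$ and the delay term $q^z$ are absorbed via Gronwall using their boundedness on $[t_0,T]$, and the history is propagated by the standard shift. This yields a unique generalized solution with $(z(t),z_t(t);z^t)\in C([t_0,T];\mathbf H)$; the relevant quantitative estimates are precisely those derived for the $z$ dynamics in Section \ref{expsec} below, so I would simply cite them here. Linearity of the $z$-equation in $z$ makes this strictly easier than the genuinely nonlinear delay analysis of \cite{delay}.

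Second, set $w:=u-z$, where $u$ is the solution of \eqref{reducedplate} furnished by Theorem \ref{rewrite} (equivalently, the solution of \eqref{gendelay} on $\mathbf H$). Since $u$ and $z$ both lie in $C([t_0-t^*,T];H_0^2(\Omega))\cap C^1([t_0,T];L_2(\Omega))$, so does $w$. Subtracting \eqref{exp} from \eqref{reducedplate} and using (i) linearity of $\Delta^2$, $\partial_x$ and the delay map $q^{\,\cdot}$, and (ii) the algebraic identity $\cF(u,u)-\cF(z,u)=[b-\|\nabla u\|^2]\Delta(u-z)=\cF(w,u)$, valid because $\cF(\cdot,u)$ is linear in its first argument with $u$ held fixed, one verifies that $w$ satisfies \eqref{smooth} exactly: the static-damping term $\beta z$ is precisely what was removed from \eqref{exp}, and the data cancel, $w(t_0)=u(t_0)-z(t_0)=0$, $w_t(t_0)=0$, $w^{t_0}=0$. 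Uniqueness of $w$ on $\mathbf H$ is then inherited from uniqueness of $u$ (Theorem \ref{rewrite}, \cite{delay}) and of $z$.

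The only genuine work is the energy bound for \eqref{exp}; everything else is an algebraic consequence of the way \eqref{exp}--\eqref{smooth} were constructed so that their sum reconstructs \eqref{reducedplate}. The one point requiring a little care is that $c(t)$ depends on the \emph{full} solution $u$, not on $z$ alone, so the $z$-equation is linear but genuinely non-autonomous; this is harmless for well-posedness but does mean the constants in the energy estimate depend on $R$, the diameter of $\mathscr B$, through $\sup_{[t_0,T]}\|\Delta u(t)\|^2$. I do not expect any serious obstacle here: this lemma is essentially the bookkeeping that sets up the decomposition exploited in Sections \ref{expsec}--\ref{flowdecomp}.
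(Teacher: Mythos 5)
Your proposal matches the paper's argument: the paper likewise establishes well-posedness of the $z$-system by treating $u$ as given and invoking the energy/stability estimates of Section \ref{expsec} together with the framework of \cite{delay}, and then obtains $w$ simply as the difference $u-z$, with the algebraic cancellations (linearity of $q^{\,\cdot}$ and of $\cF(\cdot,u)$ in its first slot, vanishing of the $w$ data) exactly as you describe. Your write-up is just a more explicit version of the same reasoning, so there is nothing to add.
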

\noindent Let $$E_{\beta}(z(t))\equiv \dfrac{1}{2}[||\Delta z||^2+||z_t||^2+\beta ||z||^2].$$ For the decomposed plate dynamics, we will show the following supporting lemmas:
 \begin{lemma}\label{expattract}
There exists  $k_e(R)>0$ and  $\beta_e(R)>0$ such that for $k>k_e$ and $\beta>\beta_e$, the quantity $E_{\beta}(z(t))$ decays exponentially to zero. In fact, $T^z_t(x_0)=(z,z_t;z^t)$ decays uniformly exponentially to zero in $\mathbf H$ for $x_0 \in B_{\mathbf H}(R)$ (with rate depending on $k$ and $\beta$).
 \end{lemma}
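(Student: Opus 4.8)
The plan is to construct a Lyapunov functional for the $z$-dynamics \eqref{exp} that captures the combined dissipative effect of the viscous damping $(k+1)z_t$ and the static damping $\beta z$, while absorbing the ``bad'' terms --- the delay term $q^z$, the nonconservative term $-Uz_x$, and the nonlinear term $\cF(z,u)=[b-\|\nabla u\|^2]\Delta z$ --- into the dissipation by taking $k$ and $\beta$ large enough (depending on $R$, the diameter of $\mathscr B$). Since $(u(t_0),u_t(t_0);u^{t_0})\in\mathscr B$, Theorem \ref{th:main2} and Lemma \ref{indeppara} give a uniform bound $\sup_t\|\nabla u(t)\|^2\le C(R)$, so the coefficient $[b-\|\nabla u(t)\|^2]$ of the nonlinearity is uniformly bounded in time; this is what makes $\cF(z,u)$ behave, for the purposes of the estimate, like a bounded perturbation of $\Delta^2 z$. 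First I would write the energy relation for \eqref{exp}: multiplying by $z_t$ gives $\frac{d}{dt}E_\beta(z) = -(k+1)\|z_t\|^2 + \langle q^z,z_t\rangle - U\langle z_x,z_t\rangle - \langle \cF(z,u),z_t\rangle$. The last (nonlinear) term is handled by noting $-\langle[b-\|\nabla u\|^2]\Delta z,z_t\rangle = \frac{d}{dt}\big(-\tfrac12[b-\|\nabla u\|^2]\|\nabla z\|^2\big) - \langle \partial_t\|\nabla u\|^2\,\nabla z,\nabla z\rangle$, and $\partial_t\|\nabla u\|^2 = -2(\Delta u, u_t)$ is bounded on $\mathscr B$ by Theorem \ref{th:main2} (regularity of the attractor) together with the energy-level bounds; this absorbs into a modified energy plus a term controlled by $\|\nabla z\|^2 \le \epsilon\|\Delta z\|^2$.

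The key step is then to introduce the standard perturbation $W_z(t) = \widetilde E_\beta(z(t)) + \varepsilon\langle z_t,z\rangle + \mu\int_0^{t^*}\!\!\int_{t-s}^t E_\beta(z(\tau))\,d\tau\,ds$, exactly as in the proof of Theorem \ref{highernorms} (STEP 2) and as in \cite{conequil1,delay}, where $\widetilde E_\beta$ is $E_\beta$ modified by the nonlinear potential correction above. For $\varepsilon$ small this is equivalent to $E_\beta(z)$ up to the delay integral. Differentiating, the term $\varepsilon\langle z_t,z\rangle$ produces $-\varepsilon\|\Delta z\|^2-\varepsilon\beta\|z\|^2+\varepsilon\|z_t\|^2+\text{(cross terms)}$, supplying coercive control of $\|\Delta z\|^2$. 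The delay term is estimated via $\|q^z(t)\|_0 \le C\sup_{s\in[0,t^*]}\|z(t-s)\|_{H^2}$ (from \eqref{potential}, the operator $M_\theta^2$ being second order), so $|\langle q^z,z_t\rangle|\le \delta\|z_t\|^2 + C_\delta\sup_{[t-t^*,t]}E_\beta(z)$, which is exactly what the integral term in $W_z$ is designed to absorb (its time derivative contributes $\mu t^* E_\beta(z(t)) - \mu\int_{t-t^*}^t E_\beta(z(\tau))d\tau$); the $-Uz_x$ term gives $\delta[\|z_t\|^2+\|\Delta z\|^2]+C_\delta$. After choosing $\beta\ge\beta_e(R)$, then $\varepsilon$ small, then $\mu$ small, and finally $k\ge k_e(R)$ large enough to dominate the remaining $\|z_t\|^2$ coefficients (here is where large viscous damping enters --- the delay necessitates it, as noted in the text citing \cite{pignotti,pignotti1}), one arrives at $\frac{d}{dt}W_z(t) + \gamma\big[E_\beta(z(t)) + \int_{t-t^*}^t E_\beta(z(\tau))d\tau\big] \le 0$ with no residual constant, because \eqref{exp} has no forcing $p_0$ (it was assigned to the $w$-equation). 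By equivalence of $W_z$ with the full $\mathbf H$-norm of $(z,z_t;z^t)$ --- that is, $c\,\|T_t^z x_0\|_{\mathbf H}^2 \le W_z(t) \le C\,\|T_t^z x_0\|_{\mathbf H}^2$, using the delay integral to bound $\int_{-t^*}^0\|\Delta z^t\|^2$ --- this differential inequality yields $W_z(t)\le W_z(t_0)e^{-\widetilde\gamma(t-t_0)}$, hence uniform exponential decay of $E_\beta(z(t))$ and of $\|T_t^z(x_0)\|_{\mathbf H}$ for $x_0\in B_{\mathbf H}(R)$, with rate depending on $k,\beta$.

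The main obstacle I anticipate is \emph{not} the structure of the Lyapunov argument, which is by now routine (cf. Theorem \ref{highernorms} STEP 2 and \cite{conequil1}), but rather the \emph{uniformity in $k$ of the constant $R$} combined with the bookkeeping of the absorption: one needs the nonlinear coefficient bound $\sup_t\|\nabla u(t)\|^2\le C(R)$ and the bound on $\partial_t\|\nabla u\|^2$ to hold with $R$ independent of $k$ for $k\ge k_*$, which is exactly the content of Lemma \ref{indeppara} (and Remark \ref{beta} for the $\beta$-dependence). A subtler point is that $u = z+w$ appears in $\cF(z,u)$, so the $z$-equation is genuinely driven by the \emph{given} full solution $u$ of \eqref{reducedplate}, not by $z$ alone; one must be careful that the estimate on the nonlinear term uses only $\|\nabla u\|$ (bounded uniformly) and $\|\nabla z\|$ (small relative to $\|\Delta z\|$), never $\|\nabla w\|$ --- which is legitimate since $\cF(z,u)$ is linear in its first argument. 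Once these uniformities are in hand, the choice of thresholds $\beta_e(R), k_e(R)$ is a finite, explicit chain of inequalities.
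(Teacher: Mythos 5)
Your proposal follows essentially the same route as the paper: a Lyapunov functional built from $E_\beta(z)$, an equipartition term $\langle z_t,z\rangle$, and delay integrals, with the non-dissipative contributions ($q^z$, $Uz_x$, $\cF(z,u)$) absorbed into the large viscous and static damping, using the uniformity of the absorbing set $\mathscr B$ to control the coefficient $[b-\|\nabla u\|^2]$. Two points of comparison. First, the paper treats $\cF(z,u)$ more crudely than your potential-correction device: since $\cF(z,u)$ is linear in $z$ with coefficient bounded by $C(R,b)$, it simply writes $|\langle\cF(z,u),z_t\rangle|+|\langle\cF(z,u),z\rangle|\le\epsilon\|\Delta z\|^2+C_\epsilon(R,b)[\|z_t\|^2+\|z\|^2]$ and lets $k$ and $\beta$ absorb the right-hand side; your correction term and the bound on $\partial_t\|\nabla u\|^2$ are not needed. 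Second---and this is the one step of yours that, as written, does not close---you estimate $|\langle q^z,z_t\rangle|\le\delta\|z_t\|^2+C_\delta\sup_{[t-t^*,t]}E_\beta(z)$ and claim the delay integral in $W_z$ absorbs it; but differentiating that integral only produces $-\mu\int_{t-t^*}^tE_\beta(z(\tau))\,d\tau$, and an integral over the delay window does not dominate a supremum over it. The repair is to use the integral form of the estimate, $\|q^z(t)\|\le Ct^*\int_{t-t^*}^t\|\Delta z(\tau)\|\,d\tau$ (this is \eqref{qests}), giving $|\langle q^z,z_t\rangle|\le K\|z_t\|^2+\tfrac{C(t^*)}{K}\int_{t-t^*}^t\|\Delta z\|^2\,d\tau$ with $K$ at your disposal: take $K$ large enough that $C(t^*)/K<\mu$ and then $k>K$, which is consistent with your own remark that the delay is what forces large $k$. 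The paper sidesteps this entirely by including $-\langle q(z^t),z\rangle$ in the Lyapunov function, which cancels $\langle q,z_t\rangle$ exactly and leaves only $\langle q,z\rangle$ and $\langle q_t,z\rangle$, i.e., pairings of the delay term with the low-order quantity $z$, absorbed by $\beta\|z\|^2$. Relatedly, your bound for the $Uz_x$ term should end in $C_\delta\|z\|^2$ rather than a bare constant $C_\delta$ (otherwise a residual constant survives and you only get decay to a ball), and the thresholds must be fixed in the order $\mu$, then $\epsilon$, then $\beta$ (which must dominate the $\epsilon$-dependent coefficients of $\|z\|^2$), then $k$---not $\beta$ first. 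With these adjustments your argument is the paper's argument.
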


  \begin{lemma}\label{regattract} There exists a $k_Q(R)$ such that for any $k>k_Q$ all $\beta >0$ the evolution $(w,w_t;w^{t})$ on $\mathbf H$ corresponding to \eqref{smooth} has that  $(w,w_t,w_{tt}) \in  C([t_0,\infty);H ^r),$ where $\ds H ^r \equiv 
   H^{4}(\Omega)\times H^{2}(\Omega)\times L_2(\Omega).$
 \end{lemma}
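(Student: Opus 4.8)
The plan is to bootstrap the regularity of the $w$-dynamics \eqref{smooth}, using the null initial data $(w(t_0),w_t(t_0);w^{t_0})=(0,0,0)$ in an essential way. Finite-energy well-posedness of \eqref{smooth} on $\mathbf H$ is already available from the preceding lemma (the $w$-equation is linear in $w$, with $u$ solving \eqref{reducedplate} and $z$ solving \eqref{exp} entering only as given data, both with data in $\mathscr B$). The first step is to time-differentiate \eqref{smooth}: because the history and data for $w$ vanish, $\overline w\equiv w_t$ solves the plate-with-delay equation
\[
\overline w_{tt}+(k+1)\overline w_t+\Delta^2\overline w=\partial_t q^w-U\overline w_x-\partial_t[\cF(w,u)]+\beta z_t,\qquad \overline w=\Dn\overline w=0\ \text{on}\ \Gamma,
\]
whose initial data sits \emph{exactly at the finite-energy level for the differentiated system}: $\overline w(t_0)=0\in H_0^2(\Omega)$, $\overline w^{t_0}=0$, and evaluating \eqref{smooth} at $t_0$ (all $w$-terms and $q^w(t_0)$ vanish, $z(t_0)=u(t_0)$) gives $\overline w_t(t_0)=w_{tt}(t_0)=p_0+\beta u(t_0)\in L_2(\Omega)$. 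This is precisely where the null data is decisive: for general finite-energy $w$-data one would have $w_t(t_0)\notin H_0^2(\Omega)$ and the bootstrap would fail. Here $\partial_t[\cF(w,u)]=2(\Delta u,u_t)\Delta w+[b-\|\nabla u\|^2]\Delta\overline w$ and $\partial_t q^w$ is given by \eqref{qderiv} with $w$ in place of $u$; the coefficients $b-\|\nabla u\|^2$ and $(\Delta u,u_t)$ are bounded uniformly in $t$ by the $R$-dependent energy bounds on $\mathscr B$. The objective is to show that $E(t)\equiv\tfrac12[\|\Delta\overline w(t)\|^2+\|\overline w_t(t)\|^2]$ is globally bounded.

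To do so I would rerun, essentially verbatim, the Lyapunov construction of STEP 2 of the proof of Theorem \ref{highernorms} (i.e.\ Lemma \ref{lyapo}), now for the differentiated delay evolution $(\overline w,\overline w_t;\overline w^t)$ on $\mathbf H$: build a parametric functional $W(t)$ equivalent to $E(t)$ modulo a delay tail $\int_{t-t^*}^tE(\tau)\,d\tau$, and derive an inequality of the form
\[
\tfrac{d}{dt}W(t)+\widetilde\gamma\,W(t)\le M(R)+C\,\|w_t(t)\|^2\,W(t)+C\,\|z_t(t)\|\,(1+\sqrt{W(t)}).
\]
The quadratic coefficient terms (the analogue of $P_1$, and $\langle[\cF(w,u)]',\overline w\rangle$) are controlled as in Lemma \ref{lyapo} from the uniform bounds on $\|\nabla u\|$, $\|\Delta u\|$, $\|u_t\|$; the delay contributions $\langle[q^w]',\overline w\rangle$ and $\langle[q^w]',\overline w_t\rangle$ are estimated from \eqref{qderiv}, the interpolation inequality $\|w\|_3^2\le\epsilon\|w\|_4^2+C_\epsilon\|w\|_2^2$, and the elliptic bound $\|w\|_4^2\le C(R)[E(t)+1]$ coming from the biharmonic equation for $w$ used below; the non-dissipative term $U\overline w_x$ is absorbed into $(k+1)\|\overline w_t\|^2$, which is where the size requirement $k>k_Q(R)$ enters; and the forcing $\beta z_t$ produces the last term, which is time-integrable because $z_t$ decays exponentially (Lemma \ref{expattract}).

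Closing the Gronwall inequality is the final step, and rests on the finiteness of the dissipation integral for $w_t$: since $w_t=u_t-z_t$, with $\int_{t_0}^\infty\|u_t\|^2\,dt<\infty$ by Corollary \ref{dissint} (valid since $k>0$) and $\int_{t_0}^\infty\|z_t\|^2\,dt<\infty$ by the exponential decay of Lemma \ref{expattract}, one gets $\int_{t_0}^\infty\|w_t(t)\|^2\,dt<\infty$. Feeding this, the finiteness of $W(t_0)=\tfrac12\|p_0+\beta u(t_0)\|^2$, and the time-integrability of the $\beta z_t$-contribution into Gronwall's inequality (exactly as at the end of the proof of Theorem \ref{highernorms}) yields $\sup_{t\ge t_0}W(t)<\infty$, hence $\sup_{t\ge t_0}[\|\Delta w_t\|^2+\|w_{tt}\|^2]<\infty$. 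Rewriting \eqref{smooth} as the clamped biharmonic problem $\Delta^2 w=p_0+q^w-Uw_x-\cF(w,u)+\beta z-w_{tt}-(k+1)w_t$ in $\Omega$ with $w=\Dn w=0$ on $\Gamma$, the right-hand side is then bounded in $L_2(\Omega)$ uniformly in $t$ (using $w\in H_0^2(\Omega)$, the just-obtained $w_{tt}\in L_2(\Omega)$ and $w_t\in H^2(\Omega)$, $z\in H_0^2(\Omega)$, and $q^w\in L_2(\Omega)$ as a finite integral of $M_\theta^2 w_{\text{ext}}$), so elliptic regularity for the biharmonic operator gives $\sup_{t\ge t_0}\|w(t)\|_4<\infty$. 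Continuity in time, $(w,w_t,w_{tt})\in C([t_0,\infty);H^4(\Omega)\times H^2(\Omega)\times L_2(\Omega))$, then follows from the finite-energy continuity of $(\overline w,\overline w_t)$ for the differentiated delay system and the continuous dependence of the biharmonic solution on its $L_2$-datum. (As usual, the differentiated computations are first carried out on Galerkin or smoothed approximations and passed to the limit.)

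The hardest part is the self-referential structure the delay operator forces on the Gronwall estimate: by \eqref{qderiv}, $\partial_t q^w$ involves \emph{third}-order spatial derivatives of $w$ at times in $(t-t^*,t)$, which through interpolation and the elliptic bound $\|w\|_4^2\le C[E+1]$ feed back into $E$ at intermediate times. Reconciling this feedback with the absorption into the viscous damping---hence the threshold $k>k_Q$---and with the use of the finite dissipation integral $\int\|w_t\|^2<\infty$ to terminate Gronwall is the delicate point; the non-dissipative term $Uw_x$ and the possibly large negative ``potential'' $[b-\|\nabla u\|^2]\Delta w$ must also be tracked, but they enter only multiplied by controlled quantities and pose no difficulty beyond careful bookkeeping.
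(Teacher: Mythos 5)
Your proposal is correct, and it shares the paper's skeleton --- time-differentiate \eqref{smooth}, exploit the null data and history so that $(\overline w(t_0),\overline w_t(t_0))=(0,\,p_0+\beta u(t_0))\in Y_{pl}$ and $\overline w^{t_0}=0$, control $\partial_t q^w$ via \eqref{qests*} together with the interpolation $\|w\|_3^2\le\epsilon\|w\|_4^2+C_\epsilon\|w\|_2^2$ and the elliptic bound $\|w\|_4^2\le C(R)[E+1]$, and finish with biharmonic regularity --- but it closes the global-in-time estimate by a genuinely different mechanism. The paper adds and subtracts an artificial static damping $K\overline w$, views the differentiated equation as an exponentially stable linear plate driven by $G(u,z,w,\overline w,w^t)$, writes the variation-of-parameters formula \eqref{varpar0}, and absorbs $\sup_t\|(\overline w,\overline w_t)\|_{Y_{pl}}$ on the right-hand side by up-scaling $\min[k,K]$ so that the factor $C(R,U,t^*)/\omega(k,K)$ is small --- a small-gain/contraction argument. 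You instead rerun the Lyapunov construction of STEP 2 of Theorem \ref{highernorms} (the route the paper itself takes for the companion Lemma \ref{zalso}) and terminate Gronwall with the dissipation integral $\int\|w_t\|^2\,dt<\infty$ obtained from $w_t=u_t-z_t$, Corollary \ref{dissint}, and the exponential decay in Lemma \ref{expattract}. Both routes produce a threshold $k_Q(R)$: in yours, to absorb the cross term $[b-\|\nabla u\|^2]\langle\Delta\overline w,\overline w_t\rangle$ (note that since $\cF(\cdot,u)$ is linear in its first argument, no $P_1/Q_1$ decomposition is needed and your quadratic term $C\|w_t\|^2W$ is in fact dispensable, as $\partial_t\cF(w,u)=2(\Delta u,u_t)\Delta w+[b-\|\nabla u\|^2]\Delta\overline w$ is bounded outright by $C(R)(1+\|\Delta\overline w\|)$ using Lemma \ref{heyo}); in the paper's, to make the semigroup decay rate dominate the feedback constant. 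The paper's Duhamel argument is somewhat lighter on bookkeeping (no Lyapunov functional and no dissipation integral for $w_t$ are required), while yours has the virtue of reusing, essentially verbatim, machinery already built for Theorem \ref{highernorms}.
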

 
We will need to utilize the following estimates on $q$, coming from \cite{delay}. It is shown that they are valid for $q^{\cdot}$ as given in \eqref{reducedplate}---see also \eqref{qderiv} above. These estimates indicate a mechanism for {\it compensated compactness} acquired by the flow and translated into delay part of the plate system. 
\begin{align}\label{qests}
||q^u(t)|| \le Ct^*\int_{t-t^*}^t||\Delta u(\tau)|| d\tau,&\hskip1cm
||q^u(t)||_{-1} \le Ct^*\int_{t-t^*}^t||u(\tau)||_1 d\tau,\\
|\langle q^u_t(t),\psi\rangle | \le C\Big\{ ||u(t)||_1&~+||u(t-t^*)||_1+\int_{t-t^*}^t||u(\tau)||_2d\tau\Big\}||\psi||_1\\
||q^u_t(t)||\le  Ct^* \Big[
||\Delta u(t)||+ &||\Delta u(t-t^*)||+\int_{t-t^*}^t||u(s)||_3 ds\Big].\label{qests*}
\end{align}
 The last inequality follows from (\ref{qderiv}), after noting that $M_{\theta}^2$ is the second order differential operator controlled by $||\Delta u ||$ (due to zero Dirichlet boundary conditions imposed on the structure). 
Additionally, we note the following elementary bound:
$$\int_0^{t^*}\int_{t-s}^t G(u(\tau)) d\tau ds \le t^*\int_{t-t^*}^t G(u(\tau)) d\tau,$$ where $G$ is any positive functional.

  \subsection{Stability of \eqref{exp}---Proof of Lemma \ref{expattract}}\label{expsec}
 
We consider the Lyapunov approach which was used in \cite{springer,delay} (based upon \cite{Memoires}) to show the existence of an absorbing set (and  used in \cite{conequil1} to show Hadamard continuity of the full flow-plate dynamics). 
Define the Lyapunov-type function:
\begin{align}
V(T^z_t(x_0)) \equiv &~E_{\beta}(z)-\langle q(z^t),z\rangle+\langle z_t,z\rangle +\frac{k}{2}||z||^2\\\nonumber
&+\mu\Big( \int_{t-t^*}^t||\Delta z||^2ds+\int_0^{t^*}\int_{t-s}^t ||\Delta z(\tau)||^2 d\tau ds\Big),
\end{align}
where $T^z_t(x_0) = (z(t),z_t(t);z^t)$ for $t \ge t_0,$ and $\mu$ is some positive number to be specified below. Using Young's inequality and \eqref{qests}, we have immediately that
\begin{equation}\label{energybounds}
c(\mu)E_{\beta}(z(t)) \le V(T^z_t(x_0)) \le C(k)E_{\beta}(z(t))+C(\mu,t^*)\int_{t-t^*}^t ||(z(\tau))||_2^2d\tau
\end{equation}
The constant $c(\mu)$ does not depend on either damping parameter.
We now continue with the Lyapunov calculus by computing the time derivative of $V(T^z_t(x_0))$. Since the dynamics has been shown sufficiently regular for regular data, we can consider such regular initial data and perform differential calculus on a priori smooth solutions. Density and passage with the limit on initial data will allow to draw the same conclusion for finite energy initial data. 
\begin{align}
\dfrac{d}{dt} V(T^z_t(x_0))=
&\nonumber~\dfrac{d}{dt}E_{\beta}(z(t))+\langle z_{tt},z\rangle +||z_t||^2+k \langle z_t,z\rangle \\\nonumber
&-\langle q,z_t\rangle-\langle q_t,z\rangle \\\nonumber
&+\mu \dfrac{d}{dt} \Big\{\int_{t-t^*}^t||\Delta z||^2ds+\int_0^{t^*}\int_{t-s}^t ||\Delta z(\tau)||^2 d\tau ds \Big\}\\[.1cm]
=&~-k||z_t||^2+(\mu t^*+\mu-1)||\Delta z||^2-\beta||z||^2 \\\nonumber
&- \mu||\Delta z(t-t^*)||^2-\mu\int_{t-t^*}^t||\Delta z||^2 ds\\\nonumber
&+ \langle q,z\rangle-\langle q_t,z\rangle- U\langle z_x,z \rangle-U\langle z_x,z_t \rangle\\\nonumber
&-\langle\cF(z,u),z_t\rangle-\langle \cF(z,u),z \rangle
\end{align}
We twice made use of relation \eqref{exp}: $\ds z_{tt}+\beta z+(k+1)z_t+\Delta^2z+\cF(z,u)=q^z-Uz_x.$
We estimate from above; first we note that
\begin{align}
||\cF(z,u)||=& ||~[b-||\nabla u||^2]\Delta z~|| \le C(R,b)||z||_2.
\end{align}
Then, using \eqref{qests}, Young's inequality, and compactness, we have:
\begin{align*}
|\langle\cF(z,u),z_t\rangle|+|\langle \cF(z,u),z \rangle|  \le &~\epsilon ||\Delta z||^2+C_{\epsilon}(R,b)[||z_t||^2+||z||^2]\\[.2cm]
|\langle q^z(t),z\rangle|  \le &~\epsilon \int_{t-t^*}^t ||\Delta z||^2 +C_{\epsilon}(t^*)||z||^2 \\[.2cm]
|\langle q^z_t(t),z\rangle | \le &~\epsilon\left[||\Delta z(t)||^2+||\Delta z(t-t^*)||^2+\int_{t-t^*}^t||\Delta z(\tau)||d\tau\right]\\\nonumber&+C_{\epsilon}(t^*)||z||^2\\[.2cm]
U|\langle z_x,z \rangle |+U|\langle z_x,z_t \rangle | \le &~\epsilon [||\Delta z||^2+||z_t||^2]+C_{\epsilon}(U)||z||^2 
\end{align*}

We may choose $\mu$ sufficiently small, then $\epsilon$ small, then $\beta$ (depending on $R$ and $\mu$) sufficiently large, and finally $k$ sufficiently large (depending on $\beta$ and $R$).

This yields:
\begin{lemma}\label{le:48}
There exists $k_e$ and $\beta_e$ so that for $k>k_e>0$ and $\beta>\beta_e>0$:
\begin{equation}\label{goodneg}
\dfrac{d}{dt}V(T^z_t(x))\le-c(k,\beta)\left\{||z_t||^2+||\Delta z||^2 +||z||^2+||\Delta z(t-t^*)||^2+\int_{t-t^*}^t||\Delta z(\tau)||^2d\tau\right\},
\end{equation}
where $c(k,\beta) \to \infty$ as $\min\{k,\beta\}\to \infty$.
\end{lemma}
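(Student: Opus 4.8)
The statement is the terminal step of the Lyapunov computation carried out just above it, so the strategy is simply to substitute the four displayed families of upper bounds into the differential identity for $\tfrac{d}{dt}V(T^z_t(x_0))$ and then to run a four–tier choice of the parameters $\mu,\epsilon,\beta,k$ in a fixed, non-circular order. First I would record the identity already obtained (after twice invoking the $z$–equation \eqref{exp}):
$$\frac{d}{dt}V(T^z_t(x_0)) = -k\|z_t\|^2 + (\mu t^*+\mu-1)\|\Delta z\|^2 - \beta\|z\|^2 - \mu\|\Delta z(t-t^*)\|^2 - \mu\!\int_{t-t^*}^t\!\|\Delta z\|^2\,ds + \mathcal R(t),$$
where $\mathcal R(t)=\langle q^z,z\rangle-\langle q^z_t,z\rangle-U\langle z_x,z\rangle-U\langle z_x,z_t\rangle-\langle\cF(z,u),z_t\rangle-\langle\cF(z,u),z\rangle$ collects the non-sign-definite contributions. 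The five explicit terms are the ``good'' ones: $-k\|z_t\|^2$ and $-\beta\|z\|^2$ carry the imposed damping, the two delay–quadrature terms in $V$ supply $-\mu\|\Delta z(t-t^*)\|^2$ and $-\mu\int_{t-t^*}^t\|\Delta z\|^2$, and $(\mu t^*+\mu-1)\|\Delta z\|^2$ becomes strictly negative as soon as $\mu(1+t^*)<1$.

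Next I would bound $\mathcal R(t)$ term by term using \eqref{qests}, \eqref{qests*}, Young's inequality, $\|\cF(z,u)\|=\bigl|b-\|\nabla u\|^2\bigr|\,\|\Delta z\|\le C(R,b)\|\Delta z\|$, and the a priori bound $\|(z,z_t;z^t)\|_{\mathbf H}\le R$ on the absorbing ball $\mathscr B$ --- whose radius $R$, by Lemma \ref{indeppara}, is uniform for $k\ge k_\ast$. These are precisely the four estimates displayed immediately before the statement; their essential structural feature is that every loss in the top-order quantities $\|\Delta z(t)\|^2$, $\|\Delta z(t-t^*)\|^2$ and $\int_{t-t^*}^t\|\Delta z\|^2$ is weighted by the small parameter $\epsilon$, while the compensating losses are dumped entirely onto $\|z_t\|^2$ and $\|z\|^2$ with (possibly large) constants $C_\epsilon(R,b),C_\epsilon(t^*),C_\epsilon(U)$. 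Summing, for an absolute constant $c_0$,
$$\mathcal R(t)\le c_0\epsilon\Bigl[\|\Delta z\|^2+\|\Delta z(t-t^*)\|^2+\int_{t-t^*}^t\|\Delta z\|^2\,ds\Bigr]+C_\epsilon(R,b)\|z_t\|^2+C_\epsilon(R,b,t^*,U)\|z\|^2,$$
the single non-squared term $\int_{t-t^*}^t\|\Delta z(\tau)\|\,d\tau$ appearing in the $\langle q^z_t,z\rangle$ bound being reduced to this form by one further Cauchy--Schwarz/Young step.

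Finally I would close the parameter hierarchy in the order dictated by the dependencies: (i) fix $\mu$ with $\mu(1+t^*)<\tfrac12$; (ii) fix $\epsilon$ (depending on $\mu$) with $c_0\epsilon<\tfrac14\min\{1,\mu\}$ --- this pins down $C_\epsilon(R,b)$ and $C_\epsilon(R,b,t^*,U)$; (iii) choose $\beta\ge\beta_e(R)$ with $\beta-C_\epsilon(R,b,t^*,U)\ge\tfrac12\beta$; (iv) choose $k\ge k_e(\beta,R)$ with $k-C_\epsilon(R,b)\ge\tfrac12 k$. Substituting and collecting then yields \eqref{goodneg} with a coefficient $c(k,\beta)>0$ which is at least $\tfrac12\min\{k,\beta\}$ in the $\|z_t\|^2$- and $\|z\|^2$-channels and bounded below by a fixed positive constant in the remaining channels; in particular, increasing the damping only enlarges the available dissipation, so $c(k,\beta)\to\infty$ as $\min\{k,\beta\}\to\infty$, as claimed. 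The differential calculus is legitimate on a dense set of regular initial data (the $z/w$ system being well posed and sufficiently smooth there), and the inequality extends to all $x_0\in B_{\mathbf H}(R)$ by density together with the continuous dependence established in \cite{delay}. I expect the only real subtlety to be organizational rather than analytic: verifying that this four-level hierarchy is non-circular --- i.e.\ that the large constants $C_\epsilon$ absorbed successively into $\beta$ and then $k$ do not in turn force $\mu$ or $\epsilon$ to be revised. This is guaranteed precisely because every constant entering the estimates depends only on $R$ (and the fixed data $b,t^*,U$) and never on $k$ or $\beta$, which is exactly why the $k$-uniformity of the absorbing ball (Lemma \ref{indeppara}) was recorded beforehand.
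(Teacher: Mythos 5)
Your proposal is correct and follows essentially the same route as the paper: the same differential identity for $V$, the same four $\epsilon$-weighted estimates on the non-sign-definite terms (with the losses pushed onto $\|z_t\|^2$ and $\|z\|^2$), and the same non-circular parameter hierarchy $\mu \to \epsilon \to \beta \to k$, with all constants depending only on $R$, $b$, $t^*$, $U$. Your extra Young step to square the stray $\int_{t-t^*}^t\|\Delta z(\tau)\|\,d\tau$ term and your appeal to the $k$-uniformity of the absorbing ball are exactly the points the paper relies on implicitly.
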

From this lemma, and the upper bound in \eqref{energybounds}, we have for some $\gamma(k,\beta)>0$: 
\begin{equation*}
E_{\beta}(z(t))\le CV(T^z_t(x_0)) \le C V(x_0)e^{-\gamma t},~~t \ge t_0.
\end{equation*} 
This concludes the proof of the uniform exponential stability of the $z$ portion of the decomposed dynamics on $\mathscr B$.

\subsection{Regularity of \eqref{smooth}---Proof of Lemma \ref{regattract}}\label{regsec}
Again, recall that we have chosen $t_0$ large enough so that $(u(t_0),u_t(t_0);u^{t_0}) \in \mathscr B$. Let $T^w_t(\mathbf 0)=(w(t),w_t(t);w^t)$ correspond to the \eqref{smooth} dynamics.
First, we note: 
\begin{lemma}\label{heyo}
The dynamics $T_t^w(\mathbf 0)$ (such that $T_t^w(\mathbf 0)+T_t^z(x_0)=T_t(x_0)$, and $u=w+z$) are uniformly bounded on $\mathbf H$. This is to say that:
$$||(w(t),w_t(t);w^t)||_{\mathbf H} \le C(R), ~\forall~~t\ge t_0.$$\end{lemma}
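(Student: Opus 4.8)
The plan is to exploit the linearity of the delay map $q^{\cdot}$ (which is precisely what makes the splitting $u=z+w$ consistent, with $w=u-z$ in the sense of the evolution, as already noted) and to deduce the bound on $T_t^w(\mathbf 0)$ directly from the triangle inequality in $\mathbf H$:
\[
\|T_t^w(\mathbf 0)\|_{\mathbf H}\le \|T_t(x_0)\|_{\mathbf H}+\|T_t^z(x_0)\|_{\mathbf H},\qquad t\ge t_0 ,
\]
so it suffices to bound the two terms on the right, uniformly in $t\ge t_0$, by a constant depending only on $R$. Both bounds are already in hand: the first from the dissipativity of the delay plate dynamics, the second from Lemma~\ref{expattract}.

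For the first term, I would recall that $t_0$ was chosen past the absorption time so that $x_0=(u(t_0),u_t(t_0);u^{t_0})\in\mathscr B$, where $\mathscr B$ is a bounded absorbing set for $(T_t,\mathbf H)$ whose diameter $R$ is uniform for $k\ge k_*>0$ (Lemma~\ref{indeppara}); since $(T_t,\mathbf H)$ possesses a compact global attractor \cite{delay} it is bounded dissipative, hence orbits of bounded sets remain bounded, giving $\sup_{t\ge t_0}\|T_t(x_0)\|_{\mathbf H}\le C(R)$. (Equivalently, this is the global-in-time boundedness of flow-plate trajectories from Lemma~\ref{globalbound}, transported to the delay description via Theorem~\ref{rewrite}.) For the second term, Lemma~\ref{expattract} yields even more: once $k>k_e$ and $\beta>\beta_e$—which we may freely assume, since in the end $k$ is taken above all the threshold constants appearing in the decomposition—one has $\|T_t^z(x_0)\|_{\mathbf H}\le C(R)\,e^{-\gamma(t-t_0)}\le C(R)$ for all $t\ge t_0$, with $\gamma=\gamma(k,\beta)>0$. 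Combining the two estimates gives $\sup_{t\ge t_0}\|T_t^w(\mathbf 0)\|_{\mathbf H}\le C(R)$.

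The one point worth verifying is that the delay component of the $\mathbf H$-norm, $\int_{-t^*}^0\|\Delta w(t+s)\|^2\,ds$, is controlled as well; but this is immediate from $w(\sigma)=u(\sigma)-z(\sigma)$ pointwise in $\sigma$ (with $w\equiv 0$ on $(t_0-t^*,t_0)$, consistently with $z(t_0)=u(t_0)$ and $z|_{(t_0-t^*,t_0)}=u^{t_0}$), so that $\|\Delta w(\sigma)\|\le\|\Delta u(\sigma)\|+\|\Delta z(\sigma)\|$ with both terms already bounded above. There is thus no serious obstacle here: the lemma is essentially a bookkeeping consequence of Lemma~\ref{expattract} together with the dissipativity of $(T_t,\mathbf H)$, and its purpose is only to furnish the uniform $\mathbf H$-bound on $(w,w_t;w^t)$ that will drive the propagation-of-regularity argument for \eqref{smooth} in the proof of Lemma~\ref{regattract}.
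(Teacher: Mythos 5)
Your argument is exactly the paper's: the proof of Lemma \ref{heyo} given there consists of the triangle inequality $w=u-z$ combined with the uniform absorbing set $\mathscr B$ for $(T_t,\mathbf H)$ and the uniform exponential stability of $T_t^z(x_0)$ from Lemma \ref{expattract}. Your additional check of the delay component of the $\mathbf H$-norm is a harmless elaboration of the same bookkeeping, so the proposal is correct and takes essentially the same route.
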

\begin{proof}[Proof of Lemma \ref{heyo}]
This follows immediately from the existence of a uniform absorbing set $\mathscr B$ for the dynamics $T_t(x_0) = (u(t),u_t(t);u^t)$ on $\mathbf H$, and the uniform exponential stability of the $T^z_t(x_0)$ dynamics. \end{proof}

To continue with the proof of Lemma \eqref{regattract}, recall $\cF(w,u) = [b-||\nabla u||^2]\Delta w$. Consider the time-differentiated $w$ dynamics; let $\overline w = w_t$:
\begin{equation}\label{whatthe}\begin{cases} \overline w_{tt} + (k+1) \overline w_t + \Delta^2 \overline w  =  \dfrac{d}{dt}\left\{q(w^t)\right\}-\dfrac{d}{dt}\left\{\cF(w,u)\right\}-U\overline w_x+\beta z_t~~ \text{ in } \Omega\\
\overline w=\Dn \overline w=0~~\text{ on } \Gamma\\
\overline w(t_0) = 0, ~~\overline w_t(t_0) = 0,~~\overline w|_{(t_0-t^*,t_0)}=0.\end{cases}\end{equation}

Recall, the term~ $||z_t||$ decays exponentially, and is thus bounded. We will now obtain a global in time bound on~$||\overline w||_2+||\overline w_t||$ in what follows. We now invoke the well-known {\em exponential decay of the linear, damped (static {and} viscous) plate equation}  for 
$$\overline w_{tt} + (k+1)\overline w_t + K \overline w + \Delta^2 \overline w =G.$$  (To note this decay one can, for instance, see the argument just given in Section \ref{expsec} leading up to Lemma \ref{le:48}.) We consider:
\begin{align} 
\overline w_{tt} + (k+1) \overline w_t+K\overline w + \Delta^2 \overline w=&~  \dfrac{d}{dt}\left\{q^w\right\}-\dfrac{d}{dt}\left\{\cF(w,u)\right\}-U\overline w_x+\beta z_t+K\overline w\\=&~  G(u,z,w,\overline w, w^t).
\end{align}  Let $\mathbf w(t) = (\overline w(t),\overline w_t(t))$. For $t_0$ (again, sufficiently large) we utilize the {\em variation of parameters formula}:
\begin{align}\label{varpar0}
\Big|\Big|\mathbf w(t)\Big|\Big|_{Y_{pl}}  \le &~
C \Big|\Big|\mathbf w(t_0)\big)\Big|\Big|_{Y_{pl}}+\int_{t_0}^t e^{-\omega(k,K)(t-s)}\Big[||q_t^w ||_0+\left|\left|\dfrac{d}{dt}\cF(w,u)\right|\right|_0\\\nonumber&+U||\overline w||_1+K||\overline w||_0+\beta||z_t||_0\Big]ds.
\end{align}
Noting that ~$\omega \to \infty$ ~as ~$\min [k,K] \to \infty$~ (see the analysis  \eqref{goodneg}). The critical terms involve $q_t^w$ and $\dfrac{d}{dt}\mathcal F(w,u)$, as the other terms are bounded by Lemma \ref{heyo}.

For the term $\partial_t\left[q^w\right]$ we note the bounds in \eqref{qests*} above (and \cite{delay} for more details):
\begin{align}||q^w_t(\tau)||\le&~  Ct^* \Big[
||\Delta w(\tau)||+ ||\Delta w(\tau-t^*)||+\int_{\tau-t^*}^t||w(s)||_3 ds\Big] \end{align}
At this point, elliptic theory can be applied: a bound on $\|w\|_4$ is to be calculated from an elliptic equation in terms of ~$\|(\overline w,\overline w_t) \|_{{Y}_{pl}}$.
Indeed, we consider biharmonic problem with  the clamped boundary conditions.
$$ \Delta^2 w = - w_{tt} - (k+1)w_t  -\mathcal F(w,u) -Uw_{x} - q^{w}+\beta z;~~~~w = \Dn w =0 ~\text{on}~ \partial \Omega.$$
This gives, via elliptic estimates (after accounting for  the a priori bound in Lemma \ref{heyo}):
\begin{align*}\|w\|_4 \leq& ~C  \|w_{tt} - (k+1) w_t  -\mathcal F(w,u) -w_{x}  - q^{w}+\beta z\|
\\  \leq& ~C \big[ ||w_{tt}|| + (k+1)  ||w_t|| +(1+||u||^2_1)||w||_2 + U ||w||_1 + (1+t^*)\sup_{ s\in[t-t^*,t]} ||w(s)||_2   +\beta C \big]\\
\leq&~ C   [||(w_t,w_{tt})||_{Y_{pl}} +  1  ] .
\end{align*} 
Thus we have:
\begin{equation}\label{w4}
\|w(t) \|_4 \leq  C(R)\left[\|(\overline w, \overline w_{t})\|_{Y_{pl}} +  1\right],
\end{equation}

 We also have:
\begin{align}\label{timenon}
\Big|\Big|\dfrac{d}{dt}\cF(w,u) \Big|\Big|_0
=~ \Big|\Big|(\nabla u, \nabla u_t)\Delta w+||\nabla u||^2\Delta w_t \Big|\Big|_0
\le~\left|\left|(\Delta u,u_t)\Delta w\right|\right|+||\nabla u||^2\left|\left|\Delta \overline w\right|\right|.
\end{align}
Thus: \begin{equation}\label{keyvK} \left|\left| \dfrac{d}{dt}\cF(w,u)\right| \right|_0 \le C(R)\big(1+||\Delta \overline w||\big).\end{equation}
 Returning to \eqref{varpar0}, and implementing the above bounds in \eqref{w4} and \eqref{keyvK}, we have
\begin{align}
\Big|\Big|\big(\overline w(t),\overline w_t(t)\big)\Big|\Big|_{Y_{pl}} \le \nonumber&~
C ||(\overline w(t_0),\overline w_t(t_0))||_{Y_{pl}}\\&+C(R,U)\int_{t_0}^t e^{-\omega(k,K)(t-s)}\Big[1+K+||\overline w||_2+\int_{s-t^*}^s ||w(\tau)||_3d\tau\Big] ds \\
\le \nonumber&~C ||(\overline w(t_0),\overline w_t(t_0))||_{Y_{pl}}\\&+C(R,U)\int_{t_0}^t e^{-\omega(k,K)(t-s)}\Big[1+K+||\overline w||_2+\int_{s-t^*}^s ||(\overline w,\overline w_t)||_{Y_{pl}}d\tau\Big] ds 
\end{align}
At this point we note by our choice of decomposition---the $w$ problem with null initial data---the initial condition $\overline w \big|_{(t_0-t^*,t^*)}=0$ (and hence $\overline w_t =0$ as well) on $(t_0-t^*,t_0)$. Thus:
\begin{align}
\Big|\Big|\big(\overline w(t),\overline w_t(t)\big)\Big|\Big|_{Y_{pl}}
\le &~
C ||(\overline w(t_0),\overline w_t(t_0))||_{Y_{pl}}\\\nonumber&+C(R,U,t^*)\big[1+K+\sup_{s \in [t_0,t]}||(\overline w,\overline w_t)||_{Y_{pl}}\big]\int_{t_0}^t e^{-\omega(k,K)(t-s)}ds \\
\le &~C\Big|\Big|\big(\overline w(t_0),\overline w_t(t_0)\big)\Big|\Big|_{Y_{pl}}\\\nonumber&+C(R,U,t^*)\big[1+K+\sup_{s \in [t_0,t]}||(\overline w,\overline w_t)||_{Y_{pl}}\big]\dfrac{1-e^{-\omega(k,K)(t-t_0)}}{\omega(k,K)}.
\end{align}
A global-in-time bound on $(w_t,w_{tt})=(\overline w, \overline w_t)$ in $Y_{pl}$ follows by (i) taking supremums in $t \in [t_0,\infty)$, then (ii) (possibly) up-scaling ~~$\min[k,K]$ ~sufficiently (and thus scaling $\omega(k,K)$) so as to absorb the supremum on the RHS.  This completes the proof of Lemma \ref{regattract}.

\subsection{Decomposition of Flow Dynamics}\label{flowdecomp}
Consider data $y_0 \in Y_{\rho}$. At a time $t_0$ sufficiently large, we employ the reduction result Theorem \ref{rewrite} and utilize initial data $x_0=(u(t_0),u_t(t_0);u^{t_0})$ coming from the full flow plate dynamics, and denote the resulting semiflow from \eqref{reducedplate} by $T_t(x_0)$.

We consider the decomposition of the dynamics $T_t(x_0)=(u(t),u_t(t);u^t)$ into $z$ and $w$ components as in \eqref{exp} and \eqref{smooth}.
We then consider the potential flow equation, with $w$ given from \eqref{smooth} and initial time $t_0$ sufficiently large. 
\begin{equation}\label{floweq1}
\begin{cases}
(\partial_t+U\partial_x)^2\phi^w=\Delta \phi^w & \text{ in }~\mathbb R_+^3\\
\partial_{\nu} \phi^w = -[w_t+Uw_x]_{ext} & \text{ in }~\mathbb R^2\\
\phi^w(t_0)=0;~~\phi^w_t(t_0)=0.
\end{cases}
\end{equation}
In this case, via the smoothing in Lemma  \ref{regattract}, the Neumann data~ $[w_t+Uw_x]_{\text{ext}} \in H^1(\Omega) $~ is sufficiently regular for Theorem \ref{flowpot} to apply. This yields a resulting potential flow $(\phi^w,\phi_t^w) \in  C(0, \infty; H^1(\realsthree_+) \times L_2(\realsthree_+))$. Owing to the calculations in \cite{springer} and \cite[p. 241]{b-c-1} (due to the zero flow initial condition), we have that:
\begin{equation}\label{flowdecomp1}
r_{\Omega}tr\big[[\partial_t+U\partial_x]\phi^w\big] =-(\partial_t+U\partial_x)w-q(w^t).
\end{equation}
With this identity, we can consider the {\em non-autonomous} semiflow $S_t^w(\cdot)$ on $Y_{\rho}$, in reference to the flow-plate problem:
\begin{equation}\label{wfull}\begin{cases}
w_{tt} + k w_t + \Delta^2 w  = p_0+(\partial_t+U\partial_x)\phi^w\big|_{\Omega}-\cF(w,u)+\beta z~~ &\text{ in } \Omega\\
w=\Dn w=0~~\hskip1cm\text{ on } \Gamma\\
w(t_0) = 0, ~~w_t(t_0) = 0\\
(\partial_t+U\partial_x)^2\phi^w=\Delta \phi^w \hskip1cm \text { in } \realsthree_+ \times (0,T),\\
\phi^w(t_0)=0;~~\phi^w_t(t_0)=0,\\
\partial_{\nu} \phi^w = -\big[(\partial_t+U\partial_x)w\big]_{ext}\hskip1cm\text{ on } \realstwo_{(x,y)} \times (0,T).
\end{cases}
\end{equation}
 Note that $\cF (w,u) $ is linear in $w$, and we have the estimate 
 \begin{equation}\label{F}
 ||\cF(w,u) ||\leq C ||\nabla u||^2_{L^2(\Omega) } ||w||_2 \leq C ||u||^2_2 ||w||_2.
 \end{equation}
Owing to the well-posedness of the full flow-plate problem \eqref{flowplate} \cite{webster} and the estimate in (\ref{F}),  one can assert well-posedness  in $Y$  of the $(\phi^w,w)$---linear---dynamics by  appealing 
to the results for the {\em linear} flow in \cite[Theorem 2.1, pp. 3128--3128]{webster} (see \cite{jadea12} as well).  We now  consider the restriction to the flow $\phi$ from the full $S_t(y_0)$. By considering the difference $\phi-\phi^w$, we can construct the function $\phi^z$, satisfying the corresponding potential flow equation with Neumann data $$\partial_{\nu}\phi^z=-[z_t+Uz_x]_{\text{ext}}$$ on $\mathbb R^2$. Combining this potential flow $\phi^z$ and the delayed $z$ plate dynamics, as well as considering the difference between the flow-plate systems \eqref{flowplate} and \eqref{wfull}, we have constructed a {\it non-autonomous }  flow $S^z_t(\cdot)$ on $Y_{\rho}$ corresponding to the dynamics:
\begin{equation}\label{zfull}\begin{cases}
z_{tt} + k z_t + \Delta^2 z  = (\partial_t+U\partial_x)\phi^z\big|_{\Omega}-\cF(z,u)-\beta z~~ &\text{ in } \Omega\\
z=\Dn z=0~~\hskip1cm\text{ on } \Gamma\\
z(t_0) = u(t_0), ~~z_t(t_0) = u_t(t_0)\\
(\partial_t+U\partial_x)^2\phi^z=\Delta \phi^z \hskip1cm \text { in } \realsthree_+ \times (0,T),\\
\phi^z(t_0)=\phi(t_0);~~\phi^z_t(t_0)=\phi_t(t_0),\\
\partial_{\nu} \phi^z = -\big[(\partial_t+U\partial_x)z \big]_{\text{ext}}  \hskip1cm\text{ on } \realstwo_{(x,y)} \times (0,T).
\end{cases}
\end{equation}
Note: by considering $k$ and $\beta$ sufficiently large (as in Lemma \ref{expattract}), we have that $(z,z_t;z^t)$ decays exponentially in $Y$  (uniformity with respect to $R$). 

 Recall that $\mathcal N$ corresponds to the points $(u,\phi)$ which satisfy \eqref{static}. Consider the stationary problem {\em for some given function ~$u\in H_0^2(\Omega)$}:
\begin{equation}\label{staticz}
\begin{cases}
\Delta^2z+\beta z+\mathcal F(z,u)=Ur_{\Omega}tr[\partial_x \phi^z]& \xb \in \Omega\\
z=\Dn z= 0 & \xb \in \Gamma\\
\Delta \phi^z -U^2 \partial_x^2\phi^z=0 & \xb \in \realsthree_+\\
\Dz \phi^z = U\partial_x z_{\text{ext}}  & \xb \in \partial \realsthree_+
\end{cases}
\end{equation}
Let $$\mathcal N^z \equiv \{(\widehat z,\widehat{\phi^z}) \in H_0^2(\Omega) \times W^1(\realsthree_+): (\widehat z,\widehat{\phi^z}) ~\text{satisfy \eqref{staticz} variationally}\},$$  and let $\mathcal N^{z,*} \subset \mathcal N^z$ be the subset of points of the form $\{0; \widehat{\phi^z} \}$.  

\subsection{ $  (z,\phi^z) $ Dynamics }\label{zsec}
In this section we will show that the desired convergence to equilibria result, as in Theorem \ref{conequil}, holds for the $(z,\phi^z)$ dynamics. This will be accomplished by critically using the uniform exponential decay of the $(z,z_t)$ plate dynamics, and running an approximation argument with smooth data as in \cite{conequil1}. The main result of this section is: 
\begin{theorem}\label{dichotoo1} 
Assume $k>k_e$ and $\beta > \beta_e$, with $y_0 \in Y_{\rho_0}$; then for any $t_n \to +\infty$ there is a subsequence $t_{n_j}$ and a point $\widehat{y^z}=(\widehat z,0;\widehat{\phi^z},0)$ with $(\widehat z,\widehat{\phi^z}) \in \mathcal N^{z,*}$ (i.e., $\widehat z =0$) so that or any $\rho>0$.

$$\ds \lim_{j\to\infty} ||S^z_{t_{n_j}}(y_0)-\widehat{y^z}||_{Y_{\rho}}=0,$$ 
\end{theorem}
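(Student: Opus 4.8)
\emph{Proposed proof of Theorem~\ref{dichotoo1}.} The plan is to run the passage-to-the-limit scheme of \cite{conequil1} on the decomposed block $(z,\phi^z)$: first establish convergence to $\mathcal N^{z,*}$ for \emph{smooth} data, using the propagation-of-regularity and higher-norm machinery of Sections~\ref{highernorms00}--\ref{propagate}, and then transfer it to finite-energy data via a \emph{uniform-in-time} Hadamard continuity property available for \eqref{zfull} precisely because this block carries \emph{both} large static damping $\beta z$ ($\beta>\beta_e$) and large viscous damping $kz_t$ ($k>k_e$). As a first step, by waiting for absorption we may take $x_0=(u(t_0),u_t(t_0);u^{t_0})\in\mathscr B$, so Lemmas~\ref{expattract}--\ref{regattract} and the decompositions \eqref{exp}--\eqref{smooth}, \eqref{wfull}--\eqref{zfull} are in force; in particular $(z(t),z_t(t))\to 0$ in $Y_{pl}$ uniformly exponentially, hence the plate component of every subsequential limit of $S^z_t(y_0)$ vanishes, and only the flow component $(\phi^z(t),\phi^z_t(t))$ remains to be controlled in the local flow energy topology.

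\emph{Step 1 (smooth data).} Approximate $y_0$ by $y_0^m\in\cD(\bT)\cap B_R(Y)$, $y_0^m\to y_0$ in $Y$, and form $S^z_t(y_0^m)=(z^m,z^m_t;\phi^{z,m},\phi^{z,m}_t)$. The plate part of \eqref{zfull} has the form \eqref{wfull*} of Remark~\ref{dada}, with $F(t,z)=-\cF(z,u^m)-\beta z$ a lower-order term whose coefficient $[\,b-\|\nabla u^m\|^2\,]$ is globally bounded on $\mathscr B$, coupled to the flow through the standard downwash $-[z^m_t+Uz^m_x]_{\text{ext}}$. Thus the argument of Theorem~\ref{highernorms} (STEP~1 on finite time, STEP~2 on the infinite horizon, now using the finiteness of $\int\|z^m_t\|^2\,dt$, indeed its exponential decay) gives a global-in-time bound
\[
\sup_{t\ge T^*}\Big\{\|z^m(t)\|_4^2+\|\Delta z^m_t(t)\|_0^2+\|z^m_{tt}(t)\|_0^2\Big\}\le C_1 .
\]
Feeding this into Theorem~\ref{highernorms0} as adapted in Remark~\ref{dada} (the free-flow part $\phi^{z,*}$, with finite-energy though non-localized data $(\phi^m(t_0),\phi^m_t(t_0))$, still tends to $0$ in local energy by scattering), we obtain: along any $t_n\to\infty$ there is a subsequence with $S^z_{t_{n_j}}(y_0^m)\to(\widehat z^m,0;\widehat{\phi^z}^m,0)$ in $Y_\rho$, with $(\widehat z^m,\widehat{\phi^z}^m)$ solving \eqref{staticz} variationally; by the exponential decay of $z^m$, $\widehat z^m=0$, so the limit lies in $\mathcal N^{z,*}$. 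Since every subsequence of $\{t_n\}$ has a further such subsequence, $\mathrm{dist}_{Y_\rho}\!\big(S^z_{t_n}(y_0^m),\mathcal N^{z,*}\big)\to 0$ as $n\to\infty$, for each fixed $m$.

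\emph{Step 2 (density and conclusion).} Choosing $k_e,\beta_e$ no smaller than the thresholds $k_c,\beta_c$, the Lyapunov argument of Section~\ref{expsec} (now exploiting that $(z,z_t)$ itself decays exponentially, Lemma~\ref{expattract}) yields uniform-in-time Hadamard continuity of $S^z_t(\cdot)$ on $Y_\rho$ (the analogue of Theorem~\ref{strongcont}, cf.\ Theorem~\ref{hadcont*}): $\varepsilon_m:=\sup_{t>0}\|S^z_t(y_0^m)-S^z_t(y_0)\|_{Y_\rho}\to 0$. Then for the fixed sequence $t_n$,
\[
\mathrm{dist}_{Y_\rho}\!\big(S^z_{t_n}(y_0),\mathcal N^{z,*}\big)\le\varepsilon_m+\mathrm{dist}_{Y_\rho}\!\big(S^z_{t_n}(y_0^m),\mathcal N^{z,*}\big),
\]
and letting $n\to\infty$ (Step~1) then $m\to\infty$ gives $\mathrm{dist}_{Y_\rho}\!\big(S^z_{t_n}(y_0),\mathcal N^{z,*}\big)\to 0$. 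Since $\mathcal N^{z,*}$ is compact in the $Y_\rho$ topology (elliptic regularity/uniqueness for the stationary flow equation with zero Neumann trace forces its flow components into a $W^2$-bounded, hence locally precompact, set; indeed a reflection/Liouville argument shows $\nabla\widehat{\phi^z}\equiv0$), we may extract a subsequence $t_{n_j}$ along which $S^z_{t_{n_j}}(y_0)\to\widehat{y^z}=(0,0;\widehat{\phi^z},0)\in\mathcal N^{z,*}$ in $Y_\rho$, as claimed.

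\emph{Main obstacle.} The delicate point is the uniform-in-time Hadamard continuity of Step~2 for the \emph{non-autonomous} block \eqref{zfull}: one must obtain a global-in-time continuous-dependence estimate despite the coupling term $\cF(\cdot,u)$, whose coefficient differs between the $y_0$- and $y_0^m$-trajectories. This is rescued by the structure $\cF(z,u)=[\,b-\|\nabla u\|^2\,]\Delta z$, which carries the exponentially decaying factor $\Delta z$, so the discrepancy $\big([\,b-\|\nabla u^m\|^2\,]-[\,b-\|\nabla u\|^2\,]\big)\Delta z$ is integrable in time and does not spoil the decay rate underlying the estimate; the other heavy ingredient, the global higher-norm bound of Step~1, is a direct adaptation of Theorem~\ref{highernorms}.
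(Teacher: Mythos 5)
Your proposal is correct and follows essentially the same route as the paper's proof: convergence for smooth data via propagation of higher regularity for the $z$-block (the paper's Lemma \ref{zalso} and Corollary \ref{zcor}, resting on the simplified estimate for $\tfrac{d}{dt}\mathcal F(z,u)$), uniform-in-time Hadamard continuity of $S^z_t(\cdot)$ rescued by the exponential decay of the $z$-component (the paper's Lemma \ref{hadcont*}), and a density/triangle-inequality passage to finite-energy data. The only cosmetic difference is the final extraction step, where you invoke compactness (indeed triviality, via the Liouville observation) of $\mathcal N^{z,*}$ in $Y_\rho$ rather than transferring the subsequential limit point directly from the smooth approximants as the paper does; both are valid.
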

\begin{proof}[Proof of Theorem \ref{dichotoo1}]  The proof follows through several lemmas. Note that the linear part of \eqref{zfull} is equivalent to the system \eqref{withbeta} (since both have static damping, $\beta>0$). The decomposed nonlinear term $\mathcal F(z,u)$  enjoys the estimate. 
\begin{align}\label{unbound}
||\cF(z,u)||=&  \le C(R,b)||z||_2.
\end{align}  
\begin{lemma}\label{hadcont*}[Uniform-in-Time Hadamard Continuity  for $S_t^z(\cdot) $]
Given initial datum $y_0^m=(u^m_0,u^m_1;\phi^m_0,\phi^m_1)$ and $y_0=(u_0,u_1;\phi_0,\phi_1)$, with corresponding decompositions as in \eqref{zfull} and \eqref{wfull}, we have that 
$$\sup_{t \ge 0} ||S_t^z(y_0^m) - S_t^z(y_0)||_{Y_{\rho}} \to 0~ \mbox{ as} ~ y_0^m \to y_0,  in Y$$ 
\end{lemma}

\begin{proof}[Proof of Lemma \ref{hadcont*}]
With a minor modification to the proof of Theorem \ref{strongcont} in \cite{conequil1} ($f(z)$ replaced by $\mathcal F(z,u)$), we can obtain global-in-time Hadamard continuity. Indeed, consider the differences $\mathbf z=z^m-z$ and $\Phi^z = \phi^{z^m}-\phi^z$ which satisfy the system:
\begin{equation}\label{zfulldiff}\begin{cases}
\mathbf z_{tt} + k \mathbf z_t + \Delta^2 \mathbf z  = (\partial_t+U\partial_x) \Phi^z\big|_{\Omega}-[\cF(z^m,u^m)-\cF(z,u)]-\beta \mathbf z&\text{in } \Omega\\
z=\Dn z=0~~\hskip1cm\text{ on } \Gamma\\
\mathbf z(t_0) = u^m(t_0)-u(t_0), ~~\mathbf z_t(t_0) = u^m_t(t_0)-u_t(t_0)\\
(\partial_t+U\partial_x)^2 \Phi^z=\Delta  \Phi^z \hskip1cm \text { in } \realsthree_+ \times (0,T),\\
 \Phi^z(t_0)=\phi^m(t_0)-\phi(t_0);~~ \Phi^z_t(t_0)=\phi^m_t(t_0)-\phi_t(t_0),\\
\partial_{\nu} \phi^z = -\big[(\partial_t+U\partial_x)\mathbf z \big]_{\text{ext}}  \hskip1cm\text{ on } \realstwo_{(x,y)} \times (0,T).
\end{cases}
\end{equation}
We then utilize the velocity multipliers $\mathbf z_t$ and $ \Phi_t$ (respectively). The linear portion of the dynamics yields the standard energy identity for the differences:
  \begin{align}\label{needed}
 E_z(t) + \int_0^tk\|\mathbf z_t\|^2 d\tau = &~E_z(0)+2U\lb  \Phi(0),\mathbf z_x(0)\rb-2U\lb  \Phi(t),\mathbf z_x(t)\rb\notag\\&+\int_0^t \left\lb \cF(z^m,u^m)-\cF(z,u),\mathbf z_t\right\rb_{\Omega} d\tau,  \end{align}
 where we have utilized the notation, \begin{align*}
 E_z=&~ \frac{1}{2}\left(\|\Delta \mathbf z\|^2+\|\mathbf z_t\|^2+\beta\|\mathbf z\|^2+\|\nabla  \Phi\|^2 -U^2\| \Phi_x\|^2+\| \Phi_t\|^2\right).
 \end{align*} 
 
 By the bounds on $E_{int}$ as in Lemma \ref{energybound}, as well as the global-in-time energy bounds (Lemma \ref{globalbound}),
 one obtains
 \begin{align}
 E_z(t) + \int_0^t k\|\bz_t\|^2d\tau & \nonumber \\ \le C(U)E_z(0)+&C\|\bz_x(t)\|^2+C\int_0^t \left|\left\lb \cF(z^m,u^m)-\cF(z,u),\mathbf z_t\right\rb_{\Omega} \right|d\tau.
 \end{align} Via compactness ~~$\|\bz_x\|^2 \le \epsilon \|\Delta \bz \|^2 + C(\epsilon) \|\bz\|^2$, and hence
 \begin{align}
 E_z(t) + \int_0^t k\|\bz_t\|^2d\tau & \nonumber \\ \le C(U)E_z(0)+&C\|\bz(t)\|^2+C\int_0^t \left|\left\lb \cF(z^m,u^m)-\cF(z,u),\mathbf z_t\right\rb_{\Omega} \right|d\tau
\end{align}
 Observing that \begin{equation}\label{oneusing} \|\bz(t)\|^2 = \left|\left|\int_0^t \bz_t(\tau)d\tau +\bz(0) \right|\right|^2 \le \int_0^t \|\bz_t\|^2d\tau +\|\bz(0)\|^2 \le \int_0^t E_z(\tau)d\tau+E_z(0),\end{equation}
 we then have:
 \begin{align}\label{okayy1}
 E_z(t) + \int_0^t k\|\bz_t\|^2d\tau \le C(U)E_z(0)+C(R)\int_0^t E_z(\tau)d\tau+C\int_0^t \left|\left\lb \cF(z^m,u^m)-\cF(z,u),\mathbf z_t\right\rb_{\Omega} \right|d\tau
 \end{align}
  (where $C(R)$ denotes the dependence of the constant on the size of the ball containing the initial datum $y^0$ and $y^0_m$).
 To note {\em finite-in-time} Hadamard continuity, let $T>0$ be fixed. From \eqref{needed}, we need only consider the nonlinear term:
 $$\int_0^T \big|\langle \mathcal F(z^m,u^m)-\mathcal F(z,u),\mathbf z_t \rangle\big| d\tau.$$
 Note the identity:
 \begin{align}
(b-||\nabla u^m||^2)\Delta z^m-(b-||\nabla u||^2)\Delta z =\nonumber&\\b\Delta(z^m-z)&-(||\nabla u^m||^2-||\nabla u||^2)\Delta z^m-||\nabla u||^2\Delta(z^m-z)
 \end{align}
 And thus:
 \begin{equation}\label{liplike}||\mathcal F(z^m,u^m)-\mathcal F(z,u)|| \le C(b,R)\big[||u^m-u||_2+||z^m-z||_2\big].\end{equation}
 Implementing this bound, we see that
 \begin{align}
\Big| \int_0^T \langle \mathcal F(z^m,u^m)-\mathcal F(z,u),\mathbf z_t \rangle d\tau\Big| \le &~C(b,R)\int_0^T [||\mathbf u||_2+||\mathbf z||_2]||\mathbf z_t|| d\tau\\
\le &~C_{\epsilon}(b,R)\big[\int_0^TE_z(\tau)d\tau+\int_0^T ||u^m-u||_2^2 d\tau\big]\label{okayy2}.
 \end{align}
 We can now invoke the Hadamard continuity of $S_t(\cdot)$ for the $u$-dynamics on any finite $[0,T]$ (Theorem \ref{hadcont}), as well as the initial conditions in \eqref{zfulldiff}, and thus
 $$||u^m(\tau)-u(\tau)||^2_2 \le C(T)E_z(0).$$
Gronwall's inequality applied in \eqref{okayy1} with \eqref{okayy2} then yields that
 \begin{equation}\label{gronwall} E_z(t) \le C(b,U,R,T)E_z(0). \end{equation}
Hence, for any {\em fixed} $T$, we obtain a Hadamard continuity of the semiflow.

We now address this continuity on the infinite-time horizon. We return to \eqref{needed}:
\begin{align}
E_z(t)+\int_{0}^t k\|\bz_t\|^2 d\tau \le & ~ C\Big\{E_z(0)+\|\bz(t)\|^2_1+\Big|\int_{0}^t\langle \cF(z^m,u^m)-\cF(z,u),\bz_t\rangle d\tau\Big| \Big\}\notag\\
\le C\Big\{E_z(0)+\epsilon E_z(t)&~ +\|\bz (t)\|^2+\int_{0}^t \| \cF(z^m,u^m)-\cF(z,u)\|_0\| \bu_t\|_0 d\tau \Big\}
\end{align}
Again using \eqref{oneusing}, we obtain:
\begin{align}
E_z(t)+\int_{0}^t k\|\bz_t\|^2 d\tau \le & ~ C\Big\{E_z(0)+\int_0^t\|\bz_t\|^2 d\tau+\int_{0}^t \| \cF(z^m,u^m)-\cF(z,u)\|_0\| \bu_t\|_0 d\tau \Big\}
\end{align}
We invoke the bound \eqref{liplike} and note the global-in-time bounds on the $(u^m,u^m_t)$, $(u,u_t)$ and $(z^m ,z^m_t)$, and $(z,z_t)$ dynamics in $Y_{pl}$:
$$||\mathcal F(z^m,u^m)-\mathcal F(z,u)|| \le C(b,R)[||u^m-u||_2+||z^m-z||_2]\le C(R),$$ and thus
\begin{align*}E_z(t)+\int_{0}^t k\|\bz_t\|^2 d\tau \le & ~ C\Big\{E_z(0)+\int_{0}^t\big(\|\bz_t\|^2+C(R)\|\bz_t\| \big)d\tau \Big\}\end{align*}
Scaling up $\beta$ and $k$ provides exponential decay of $z_t$ and $z^m_t$ to $0$, and thus $\mathbf z_t \to 0$ exponentially---uniform with respect to $y_0^m,y_0 \in B_{Y}(R)$ (see above Lemma \ref{expattract}). 
 Via this uniform exponential decay, for every $\epsilon>0$ there exist a time $T^*>t^{\#}$ such that\footnote{$T^*$ depends on the underlying parameters of the problem $p_0,\Omega,U$, as well as the time of absorption for $\mathscr B$, and the size of the support of the initial flow data $\rho$.} $$\int_{T^*}^{\infty}\big(\|\bz_t\|^2+C(R)\|\bz_t\|\big) d\tau \le \epsilon.$$

 Thus, for any $t>T^*$  we may write:
 \begin{align}E_z(t) \le &   C\Big\{E_z(0)\!+\!\int_{0}^{T^*}\big(\|\bz_t\|^2\!+\!C(R)\|\bz_t\|\big) d\tau\!+\!\int_{T^*}^{\infty}\big(\|\bz_t\|^2\!+\!C(R)\|\bz_t\|\big) d\tau \Big\} \end{align}
 Utilizing \eqref{gronwall}, for any $\epsilon>0$ we have:
 \begin{align}
 E_z(t) \le C(R,T^*)\big(E_z(0)+E^{1/2}_z(0)\big)+\epsilon/2
 \end{align}
 Taking $y^0_m$ sufficiently close to $y^0$ will yield that ~$C(R,T^*)\big(E_z(0)+E_z^{1/2}(0)\big) \le \epsilon /2$. This concludes the proof of Lemma \ref{hadcont*}.
\end{proof}
With uniform-in-time Hadamard continuity available, to continue with an {\em approximation by smooth data} approach for the $(z,\phi^z)$ dynamics, we must show that the desired convergence to equilibria holds for {\em smooth data} in the $(z,\phi^z)$ equation. 
\begin{lemma}\label{zalso}
Assume that $$x_0=(u(t_0),u_t(t_0);u^{t_0}) \in (H^4\cap H_0^2)(\Omega)\times H_0^2(\Omega) \times L_2\left(-t^*,0;(H^4\cap H_0^2)(\Omega)\right).$$ Then the dynamics $T_t^z(x_0)$ (such that $T_t^w(\mathbf 0)+T_t^z(x_0)=T_t(x_0)$, and $u=w+z$) are uniformly bounded on $(H^4\cap H_0^2)(\Omega)\times H_0^2(\Omega) \times L_2\left(-t^*,0;(H^4\cap H_0^2)(\Omega)\right)$. This is to say that:
$$||(z(t),z_t(t);z^t)||_{(H^4\cap H_0^2)(\Omega)\times H_0^2(\Omega) \times L_2\left(-t^*,0;(H^4\cap H_0^2)(\Omega)\right)} \le C(R), ~\forall~~t\ge t_0.$$ \end{lemma}
\begin{proof}[Proof of Lemma \ref{zalso}] We do not repeat the argument in full, as it is directly follows the proof of Theorem \ref{highernorms}, mutatis mutandis. Indeed, we consider smooth initial (as above), differentiate the equation \eqref{exp}, and perform the Lyapunov analysis in the proof of Theorem \eqref{highernorms}. The key fact (which greatly simplifies the proof in this case) is below:
\begin{align}\dfrac{d}{dt}\cF(z,u) = -2(\nabla u,\nabla u_t)\Delta z&+[b-||\nabla u||^2]\Delta z_t=2(\Delta u,u_t)\Delta z+[b-||\nabla u||^2]\Delta z_t\\[.4cm]
\left|\left|\dfrac{d}{dt}\cF(z,u) \right|\right|_0 \le &~C(b,R)[1+||z_t||_2].\label{thisthis} \end{align}
\begin{remark}\label{notkarman} Note this type of estimate {\em fails} for von Karman dynamics because of the precise structure of the nonlocality; this is precisely the stage at which our {\em decomposition} approach will not obtain for the analogous von Karman problem.\end{remark}
At this point, we repeat the arguments in the proof of the propagation of regularity result Theorem \ref{highernorms}. Specifically, the analysis of the term $B(t)$ (involving the decomposition involving terms $P_1,Q_1$) is not needed here. In particular, we take $P_1$ and $Q_1$ to be zero here. Indeed, utilizing \eqref{thisthis}, we have the estimate:
\begin{equation}\label{oneappealing}
\left|\left\langle \dfrac{d}{dt}\cF(z,u),z_{tt} \right\rangle\right| \le \epsilon||\Delta z_t||^2+C_{\epsilon}||z_{tt}||^2+C_{\epsilon}(R).
\end{equation}
At the junctures of \eqref{bee}, \eqref{w1}, and in the proof of Lemma \ref{lyapo}, we implement \eqref{oneappealing} and up-scale $k$ appropriately. 
\end{proof}

\begin{remark}
The subtlety needed in showing the propagation result in Theorem \ref{highernorms} (involving the decomposition into $P_1$ and $Q_1$) is needed to demonstrate that the {\em propagation of regularity for the original full flow-plate dynamics in $(u,\phi)$ can be obtained for ANY damping coefficient $k>0$}. In proving Lemma \ref{zalso} above, we already require $k$ large to yield other supporting results, and thus scaling $k$ up to simplify the argument for the reduced $z$ dynamics \eqref{exp} analogously is acceptable.\end{remark}

As a corollary, the global-in-time boundedness in Lemma \ref{zalso} yield the desired convergence to equilibria result for smooth initial data to the full $z$ problem \eqref{zfull}.
\begin{corollary}\label{zcor} Consider $y_0^M \in \mathscr D(\mathbb T) \cap Y$. 
 Thus, for any sequence $t_n \to \infty$, there is a subsequence {\em depending on $M$} (also labeled $t_n$) and a point $(\widehat z,\widehat{\phi^z}) \in \mathcal N^z$ (depending on the subsequence, and thus depending on $M$) such that~~
$$
\lim_{n \rightarrow \infty} ||S^z_{t_n}(y^M_0)-\widehat{y^z}||_{Y_{\rho}}=0,~~\rho>0.$$
\end{corollary}
\begin{proof}[Proof of Corollary \ref{zcor}]
Due to the identical structure of the $(z,\phi^z)$ dynamics in \eqref{exp} to those of \eqref{flowplate} (with $f(u)$ replaced by $\cF(z,u)$ and removing $p_0$), the proof of Theorem \ref{highernorms0} can be repeated identically for the $z$ dynamics \eqref{exp}, bearing in mind the bound in Lemma \ref{zalso}. (Recall: Theorem \ref{highernorms0}---which translates a global-in-time bound on plate dynamics into the desired strong convergence to equilibria for the full flow-plate dynamics---does not ``care" about the structure of the specific plate equation, only the nature of the coupling. See Remark \ref{dada}.)
\end{proof}
We are now in a position to finish the proof of Theorem \ref{dichotoo1}.
Now, let $\epsilon>0$ be given. We consider a sequence of smooth initial data $y_0^m \in \mathscr D(\mathbb T)$ with the property that $y_0^m \to y_0$ in $Y$. We may choose $M$ sufficiently large so that $$ \sup_{t > 0} \|S^z_{t}(y_0 ) -  S^z_{t} (y_0^{M}) \|_{Y_{\rho}}< \epsilon/2.$$

Hence, there exists a $T(\epsilon,M)$ so that for all $t_n>T(\epsilon,M)$
$$\displaystyle
d_{Y_{\rho}}(S^z_{t_n}(y^M_0),\widehat{y^z}) < \dfrac{\epsilon}{2}.$$
Then
\begin{align}
d_{Y_{\rho}}(S^z_{t_n}(y_0),\widehat{y^z}) \le &~ d_{Y_{\rho}}(S^z_{t_n}(y_0),S^z_{t_n}(y^M_0))+d_{Y_{\rho}}(S^z_{t_n}(y^M_0),\widehat{y^z}) < ~\epsilon.
\end{align}
Owing to the decay of $(u,u_t)$ in Lemma \ref{expattract}, we note that, in fact, $(\widehat{z},\widehat{\phi^z}) \in \mathcal N^{z,*}$. This implies the desired convergence result, and concludes the proof of Theorem \ref{dichotoo1}. \end{proof}
\subsection{ $  (w,\phi^w) $ Dynamics }

Now we consider the dynamics given by $S^w_t(\cdot)$ on $Y_{\rho}$ and note the regularity of the dynamics (see Theorem \ref{regattract}):
\begin{theorem}\label{dichotoo2} Assume $y_0 \in Y_{\rho_0}$. 
For $k>k_Q$, the dynamics $S_t^w(\cdot)$ have the property that for any sequence $t_n \to \infty$ there is a subsequence $t_{n_j}$ and a point $\widehat{y^w}\equiv (\widehat w,0;\widehat{\phi^w},0)$ with $(\widehat w,\widehat{\phi^w}) \in \mathcal N$ so that
$$\ds \lim_{j \to \infty} ||S_{t_{n_j}}^w(y_0)-\widehat{y^w}||_{Y_{\rho}} = 0$$~~ for any $\forall~\rho>0$. This implies the desired convergence result for $S_t^w(\cdot)$ to the set $\mathcal N$
\end{theorem}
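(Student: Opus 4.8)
The plan is to obtain Theorem \ref{dichotoo2} as an essentially immediate consequence of Theorem \ref{highernorms0}, now applied to the non-autonomous flow-plate system \eqref{wfull} in place of \eqref{flowplate}. As Remark \ref{dada} records, the proof of Theorem \ref{highernorms0} does not use the precise structure of the plate equation; it needs only (i) that the plate component $(w,w_t)$ converges in the sense of Theorem \ref{convergenceprops}, and (ii) the higher-norm bound \eqref{thisone} for $w$. Condition (ii) is exactly Lemma \ref{regattract}: for $k>k_Q$ we have $(w,w_t,w_{tt})\in C([t_0,\infty);H^4(\Omega)\times H^2(\Omega)\times L_2(\Omega))$ with a bound depending only on $R$, so $\sup_{t\ge t_0}\{\|w(t)\|_4^2+\|\Delta w_t(t)\|_0^2+\|w_{tt}(t)\|_0^2\}<\infty$. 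Thus, in contrast with the original $(u,\phi)$ dynamics, the decomposition has already pushed all the rough behavior into the exponentially decaying $z$-component, and no approximation by smooth data is needed for the $w$-half at all. For condition (i) I would use $u=z+w$: Theorem \ref{convergenceprops} gives $u_t(t)\to0$ in $L_2(\Omega)$ and, along a subsequence $t_{n_j}$, $u(t_{n_j})\to\widehat u$ in $H_0^2(\Omega)$, while Lemma \ref{expattract} gives $(z(t),z_t(t))\to(0,0)$ in $H_0^2(\Omega)\times L_2(\Omega)$; subtracting, $w_t=u_t-z_t\to0$ in $L_2(\Omega)$ and $w(t_{n_j})=u(t_{n_j})-z(t_{n_j})\to\widehat u$ in $H_0^2(\Omega)$, so I set $\widehat w=\widehat u$.

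With (i)--(ii) in hand I would then run, step for step, the three lemmas internal to the proof of Theorem \ref{highernorms0}. Because the flow in \eqref{wfull} carries null initial data, the scattering piece $\phi^{w,*}$ vanishes identically and $\phi^w\equiv\phi^{w,**}$, the hyperbolic Neumann potential with boundary datum $-[w_t+Uw_x]_{\text{ext}}$. Interpolating the Lemma \ref{regattract} bound exactly as in the proof of Theorem \ref{highernorms0} and feeding it into Lemma \ref{compact2} yields $\|\nabla\phi^w(t)\|_{\eta,K_\rho}^2+\|\phi^w_t(t)\|_{\eta,K_\rho}^2\le C(\rho)$ for $\eta\le1/3$ and $t$ large; the compactness criterion Lemma \ref{compactnesscriterion} then extracts a further subsequence (not relabeled) along which $(w(t_{n_j}),w_t(t_{n_j});\phi^w(t_{n_j}),\phi^w_t(t_{n_j}))\to(\widehat w,0;\widehat{\phi^w},\widehat\psi)$ in $\widetilde Y$ (the analogue of Lemma \ref{limitpoint}). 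Next, mimicking Lemma \ref{characterize}, I would differentiate the flow representation (legitimate since $w_t\in H^2(\Omega)$), test the formula \eqref{formderiv1} against $\zeta\in C_0^\infty(K_\rho)$, move $\partial_x$ and the $M_\theta$-derivatives onto $\zeta$, and use $w_t\to0$ in $L_2(\Omega)$ to get $|(\phi^w_t(t),\zeta)_{L_2(K_\rho)}|\le C(\rho)\sup_{\tau\in[0,t^*]}\|w_t(t-\tau)\|_{0,\Omega}\|\zeta\|_{1,K_\rho}\to0$, hence $\widehat\psi=0$.

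The last step is to identify $(\widehat w,\widehat{\phi^w})$ with an element of $\mathcal N$, repeating the proof of Lemma \ref{staticsols} for \eqref{wfull}: testing against $\eta\in C_0^\infty(\Omega)$ and $\psi\in C_0^\infty(\realsthree_+)$, integrating over $(t_{n_j},t_{n_j}+c)$, and passing to the limit, the inertial and damping contributions vanish by the convergence of $(w,w_t)$, the static term $\beta z$ drops because $z(t_{n_j})\to0$ in $H_0^2(\Omega)$, and the nonlocal term converges, $\mathcal F(w,u)(t_{n_j})=[b-\|\nabla u(t_{n_j})\|^2]\Delta w(t_{n_j})\rightharpoonup[b-\|\nabla\widehat w\|^2]\Delta\widehat w=f_B(\widehat w)$, since $u(t_{n_j})$ and $w(t_{n_j})$ share the $H_0^2(\Omega)$-limit $\widehat w$. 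The resulting relations are precisely the variational form of \eqref{static}, so $(\widehat w,\widehat{\phi^w})\in\mathcal N$; arbitrariness of $t_n\to\infty$ and of $\rho>0$ then yields the stated convergence of $S^w_t(y_0)$ to $\mathcal N$.

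I expect the only point needing real care to be this last identification: one must exploit $z(t)\to0$ so that $u=z+w$ and $w$ share a common $H_0^2$-limit, which is exactly what forces $\mathcal F(w,u)$ to limit to $f_B(\widehat w)$ and places the limiting pair in $\mathcal N$ rather than in the auxiliary set $\mathcal N^z$ attached to the $z$-subsystem. The weak limit passage in the cubic nonlocal term---a product of the scalar $b-\|\nabla u\|^2$, which converges in $\mathbb R$, with $\Delta w$, converging weakly in $L_2(\Omega)$---is routine, and every other ingredient is a transcription of arguments already carried out for Theorems \ref{highernorms0} and \ref{dichotoo1}, made possible here by the a priori smoothing in Lemma \ref{regattract}.
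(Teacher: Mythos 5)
Your proposal is correct and takes essentially the same route as the paper: it invokes Lemma \ref{regattract} to supply the bound \eqref{thisone}, reruns the machinery of Theorem \ref{highernorms0} for the $(w,\phi^w)$ system (whose scattering component vanishes because of the null flow data), and uses the decay of $z$ along $t_n\to\infty$ both to absorb the $\beta z$ term and to identify the limit of $\mathcal F(w,u)$ with $f_B(\widehat w)$, placing the limit point in $\mathcal N$. The paper's own proof is a terse version of exactly these steps, so no further comparison is needed.
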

\begin{proof}[Proof of Theorem \ref{dichotoo2}]
Consider the plate dynamics $T_t^w(\cdot)$ corresponding to: 
\begin{equation}\label{smooth**}\begin{cases} w_{tt} + (k+1)w_t + \Delta^2 w  = p_0+q^w(t)-Uw_x-[b-||\nabla u||^2]\Delta w+\beta z~~ \text{ in } \Omega\\
w=\Dn w=0~~\text{ on } \Gamma\\
w(t_0) = 0, ~~w_t(t_0) = 0,~~w^{t_0}=0.\end{cases}\end{equation}

Lemma \ref{regattract} guarantees a bound in higher norms  $ H^r$ for the $(w,w_t;w^t)$ dynamics, satisfying \eqref{thisone} in Theorem \ref{highernorms0}. The analysis of Theorem \ref{highernorms0} obtains the equivalent of Theorem \ref{regresult} for the $w$ dynamics in \eqref{wfull}. Here, we critically use the fact that the $z$ terms above decay along any sequence of times $t_n \to \infty$, i.e., along any sequence of times $t_n \to \infty$ we have that $z(t_{n}) \to 0$ in $H^2(\Omega)$ and $z_t(t_{n}) \to 0$ in $L_2(\Omega)$.  Considering the estimates
$$
||\beta z||_0 \le  ~C||z||_2;\hskip.1cm~~[b-||\nabla u||^2]\Delta w = ~ [b-||\nabla w||^2]\Delta w-||\nabla z||^2\Delta w,$$ and the aforementioned exponential decay, we see that $S^w_{t_n}(y_0) \to (\widehat w, 0; \widehat{\phi^w}, 0)$ (with $(\widehat w,\widehat{\phi^w}) \in \mathcal N$) in $Y_{\rho}$ for any $\rho>0$. This concludes the proof of Theorem \ref{dichotoo2}.
\end{proof}

\subsection{Final Step in the Proof of Theorem \ref{conequil}}
Note the fact that for $(\widehat z,\widehat{\phi^z}) \in \mathcal N^{z,*}$ and $(\widehat w,\widehat{\phi^w}) \in \mathcal N$, we have that \begin{equation}(\widehat z,\widehat{\phi^z})+(\widehat w,\widehat{\phi^w}) \in \mathcal N\end{equation} (due the linear structure of the stationary flow equation \eqref{static}). By Theorem \ref{dichotoo1} and Theorem \ref{dichotoo2}, for any sequence $t_n \to \infty$, there is a subsequence (also labeled) $t_n$ and two points $\widehat{y^z} =(\widehat z,0;\widehat{\phi^z},0)$ (with $(\widehat z, \widehat{\phi^z}) \in \mathcal N^{z,*}$) and $\widehat{y^w} =(\widehat w,0;\widehat{\phi^w},0)$ (with $(\widehat w, \widehat{\phi^w}) \in \mathcal N$) so that $S^z_{t_n}(y_0) \to (\widehat z,0;\widehat{\phi^z},0)$ and $S^w_{t_n}(y_0) \to (\widehat w,0;\widehat{\phi^w},0)$ in $Y_{\rho}$ for any $\rho>0$. Thus
 \begin{align}
||S_{t_n}(y_0)-(\widehat{y^z}+\widehat{y^w})||_{Y_{\rho}} =&~||(S^z_{t_n}(y_0)-\widehat{y^z})+(S^w_{t_n}(y_0)-\widehat{y^w})||_{Y_{\rho}}\\
 \le&~||S^z_{t_n}(y_0)-\widehat{y^z}||_{Y_{\rho}}+||S^w_{t_n}(y_0)-\widehat{y^w})||_{Y_{\rho}}
 \to 0,~\text{ as }~n\to\infty.
 \end{align}
Since $(\widehat z;\widehat{\phi^z})+(\widehat w;\widehat{\phi^w}) \in \mathcal N$, this implies the desired strong convergence result and completes the proof of Theorem \ref{conequil}.

\section{Open Problem: Eliminating Damping Size Requirements} \label{openprob}
In experimental and numerical simulations with panels that have no imposed damping,  flutter does not occur for subsonic flows (see Section \ref{prevv}). The techniques herein (and in \cite{chuey,conequil1,ryz,ryz2}) certainly require {\em some} damping. However, for finite energy solutions we require a damping coefficient whose size depends on the {\em inherent parameters of the problem} in order to show convergence of full trajectories to the set of stationary states. We note that there is viscous damping in every physical plate system, but we do question whether the dependence of the results above on a minimal damping coefficient $k_{min}$ is necessary. (Especially since there is no size restriction for ``smooth" initial data---Theorem \ref{regresult}---even for von Karman dynamics.) It seems that in a correct model the requirement could be removed in order to yield agreement with experiment, OR that there is some physical situation where the structural damping is sufficiently small that the subsonic panel may exhibit non-stationary end behavior. 
The authors could find no discussion of the latter situation in the engineering literature.

	In our work above, the key issue necessitating large damping is the interaction of the terms $Lu$ and $q(u^t)$ with the multiplier analysis in Section \ref{expsec} and Section \ref{regsec}. The following intermediate assertion is presently clear:
	Taking $b = 0$, $p_0\equiv 0$, $U<<1$, will not eliminate the need for a damping restriction to obtain strong stabilization of the full flow-plate dynamics due to the non-conservative, non-dissipative contributions from $q$. An interesting question is: under what restrictions (of the form $b \equiv 0$ and $p_0\equiv 0$, or at least small in some sense) can we expect {\em stability} of the plate dynamics---no non-trivial stationary states? From what is known physically, large damping, diminished loading, {\em or} small unperturbed flow velocities should lead to a simplification of the stationary set $\mathcal N$.

\section{Acknowledgements} 
The authors would like to dedicate this work to the memory of Professor A.V. Balakrishnan.

The authors sincerely thank professor E.H. Dowell of Duke University's Mechanical Engineering Department for his assistance, and for his insightful (and immensely helpful) commentary since 2010.

\end{document}